\theoremstyle{plain}
\newtheorem{thm}{Theorem}[section]
\newtheorem{lem}[thm]{Lemma}
\newtheorem{prop}[thm]{Proposition}
\newtheorem{Problem}{Problem}[section]
\theoremstyle{definition}
\newtheorem{defn}{Definition}[section]
\newtheorem{exam}{Example}[section]
\theoremstyle{remark}
\newtheorem{rem}{Remark}[section]
\begin{document}
\title{{\Large\bf {Arveson's version of the Gauss-Bonnet-Chern formula for Hilbert modules over the polynomial rings\uppercase\expandafter{}}}
\thanks{
This work is supported by National Natural Science Foundation of China : (12271298 and 11871308).}}
\author{{\normalsize Penghui Wang, Ruoyu Zhang and Zeyou Zhu} \\
{ School of Mathematics, Shandong University,} {\normalsize Jinan, 250100, China} }
\date{}
 \maketitle
\begin{abstract}
In this paper, we complete the framework of Arveson's version of the Gauss-Bonnet-Chern formula by proving that Arveson's version of the Gauss-Bonnet-Chern formula holding true for the quotient module in the Drury-Arveson module  is equivalent to the associated submodule being locally algebraic. Moreover, we establish the asymptotic Arveson's curvature invariant and the asymptotic Euler characteristic for contractive Hilbert modules over the polynomial ring in infinitely many variables, and obtain the infinitely-many-variables analogue of Arveson's version of Gauss-Bonnet-Chern formula. Finally, we solve the finite defect problem for submodules of the Drury-Arveson module $H^2$
  in infinitely many variables by proving that $H^2$ has no nontrivial submodules of finite rank.
\end{abstract}

\bigskip

\noindent{{\bf 2020  Mathematics Subject Classification:} 47A13; 46E22}

\bigskip

\noindent{\bf Key Words:} Drury-Arveson module, Arveson's version of the Gauss-Bonnet-Chern formula, locally algebraic submodules, finite defect problem

 \maketitle
\numberwithin{equation}{section}
\newtheorem{theorem}{Theorem}[section]
\newtheorem{lemma}[theorem]{Lemma}
\newtheorem{proposition}[theorem]{Proposition}
\newtheorem{corollary}[theorem]{Corollary}
\newtheorem{claim}{Claim}
\newtheorem{Case}{Case}
\newtheorem*{Notation}{Notation}


\section{Introduction}

Let $\mathbb B_d$ be the open unit ball in $\mathbb C^d$. Recall the Drury-Arveson space $H_d^2$ is a reproducing kernel Hilbert space of holomorphic functions on $\mathbb B_d$, with the reproducing kernel
$$
\mathcal{K}_{\lambda}(z)={1\over 1-\langle z,\lambda\rangle}.
$$ It can be viewed as a Hilbert module \cite{CG,DP} on the polynomial ring $\mathbb C[z_1,\cdots,z_d]$, equipped with a natural module action defined by the multiplication by polynomials.
To simplify the notations, write $\mathcal{P}_d=\mathbb{C}[z_1,\cdots,z_d].$ In \cite{Arveson curvature}, Arveson introduced the curvature invariant for quotient modules in $H_d^2\otimes \mathbb C^{N} (N\in \mathbb N^+)$. Let $\mathcal{M}\subset H_d^2\otimes \mathbb C^{N} $ be a submodule and $\mathcal{M}^\perp$ be the corresponding quotient module. Let $S_{z_i}$ be the compression of $M_{z_i}\otimes I_N$ onto the quotient space $\mathcal{M}^\perp$, i.e.
$$
S_{z_i} f=P_{\mathcal{M}^\perp}(M_{z_i}\otimes I_N) f, \, \forall f\in \mathcal{M}^\perp.
$$
 For $z\in\mathbb B_d$, define $S_d(z)=\sum\limits_{i=1}^{d}\bar{z_i}S_{z_i}$, and
$$
F(z)=\Delta_{\mathcal{M}^\perp} (I-S_d(z)^*)^{-1}(I-S_d(z))^{-1}\Delta_{\mathcal{M}^\perp},
$$
where $\Delta_{\mathcal{M}^\perp}=(I-S_{z_1}S_{z_1}^*-\cdots-S_{z_d}S_{z_d}^*)^{1\over 2}$ is the defect operator for $\mathcal{M}^\perp.$ Then for all $z\in\mathbb B_d$, $F(z)$ are positive operators on the range of $\Delta_{\mathcal M^\perp}$. In what follows,  the range of a bounded operator $A$ on a Hilbert space $\cal H$ will be denoted by $\operatorname{ran} A$. It was shown in \cite{Arveson curvature} that the radial limits $\lim\limits_{r\to 1^-}(1-r^2) \operatorname{trace}(F(r\xi))$ exist almost everywhere on the boundary $\partial \mathbb B_d$ relative to $d\sigma_d$ on $\partial \mathbb{B}_{d}$, and Arveson's curvature invariant is defined by
$$
K(\mathcal{M}^\perp)=\int_{\partial \mathbb B_d}\lim\limits_{r\to 1^-}(1-r^2) \operatorname{trace}(F(r\xi))d\sigma_d(\xi),
$$
where $\sigma_d$ is the rotationally invariant probability measure on $\partial \mathbb{B}_{d}$. Such an invariant was deeply studied by \cite{Arveson Dirac, Bhattacharyya, Bhattacharyya2, Fang Xiang, GRS, inner multipliers, Levy, Muhly, Parrott} and references therein.

Next, we will introduce the Euler characteristic. 
Let $$
\mathbb{M}_{\mathcal{M}^\perp}=\operatorname{span}\left\{f(S_{z_1},\cdots,S_{z_d}) \cdot \Delta_{\mathcal{M}^\perp} \zeta \mid f \in \mathcal{P}_d, \zeta \in \mathcal{M}^\perp\right\}.
$$
Then $\mathbb{M}_{\mathcal{M}^\perp}$ has finite free resolutions in the category of finitely generated $\mathcal{P}_d$-modules
 \begin{eqnarray}\label{Euler-ch}
 0\rightarrow F_n\rightarrow\cdots\rightarrow F_2\rightarrow F_1\rightarrow \mathbb{M}_{\mathcal{M}^\perp}\rightarrow 0,\end{eqnarray}
 each $F_k$ being a sum of $\beta_k$ copies of the module $\mathcal{P}_d$.
 In \cite{Arveson curvature}, Arveson defined the Euler characteristic of $\mathcal{M}^\perp$ by
\begin{equation}\label{chenlaoshi}
 \chi(\mathcal{M}^\perp)=\sum\limits_{k=1}^{n}(-1)^{k+1}\beta_k.
\end{equation}

In \cite[Theorem B]{Arveson curvature}, Arveson proved for any graded quotient module $\mathcal{M}^\perp$ in $ H^2_d\otimes \mathbb C^N$,
\begin{eqnarray}\label{Koula}
K(\mathcal{M}^\perp)=\chi(\mathcal{M}^\perp).
\end{eqnarray}
From Arveson's viewpoint, such a formula can be seen as an operator-theoretic version of the Gauss-Bonnet-Chern formula of Riemannian geometry, and we call it Arveson's version of the Gauss-Bonnet-Chern formula.

In the beginning of \cite[Section 5]{Arveson curvature}, Arveson wanted to study such a formula in general case, which is reformulated as the following problem.
\begin{Problem}\label{pro1.1}
For which quotient modules of $H^2_d \otimes \mathbb{C}^N$
 does Arveson's version of the Gauss-Bonnet-Chern formula hold?
\end{Problem}
Let $\mathcal{M}$ be a submodule of $H_d^{2}\otimes \mathbb{C}^N,$
and
\begin{equation}\label{bianhao1}
    E_{\lambda} \mathcal{M}=\{f(\lambda):f\in \mathcal{M}\}.
    \end{equation}
The fiber dimension of $\mathcal{M},$ denoted by $fd(\mathcal{M}),$ is defined by
\begin{equation}\label{bianhao11}
   fd(\mathcal{M})=\operatorname{sup} \{\operatorname{dim}E_{\lambda}\mathcal{M}:\lambda\in \mathbb{B}_d\}.
    \end{equation}
    Set
    \begin{equation}\label{bianhao111}
   \text{mp}(\mathcal{M})=\{\lambda \in \mathbb{B}_d: \operatorname{dim}E_{\lambda}\mathcal{M}=fd(\mathcal{M})\},
    \end{equation}
which is called the set of maximal points for $\mathcal{M}$ in \cite{CF}.
There are many efforts to study the fiber dimension of submodules \cite{CL2, CF, CCF, CGW, EM, F2, F3, F4, GRS}, and here
 we introduce locally algebraic submodules in terms of $\operatorname{mp}(\mathcal{M})$.
\begin{defn}
A submodule $\mathcal{M}$ is said to be locally algebraic if there exists a $\lambda_{0}\in \operatorname{mp}(\mathcal{M})$ and polynomials $\{p_i\}_{i=1}^\mathfrak{m} \subseteq \mathcal{M}$ such that
  $$E_{\lambda_{0}}\mathcal{M}=\operatorname{span}\{p_i(\lambda_{0}):1\leq i\leq \mathfrak{m}\}.$$
  \end{defn}
  \begin{rem}\label{recall}
  Recall that $\mathcal{M}$ is algebraic if $\mathcal{M}$ is generated by polynomials. Obviously, algebraic submodules are locally algebraic.
  \end{rem}
\begin{thm}\label{thm1.1}
For a submodule $\mathcal{M}\subseteq H_d^2 \otimes \mathbb{C}^N$, Arveson's version of the Gauss-Bonnet-Chern formula holds true for $\mathcal{M}^\perp$ iff $\mathcal{M}$ is locally algebraic.
\end{thm}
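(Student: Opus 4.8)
The plan is to reduce the asserted equivalence to two closed-form evaluations of the invariants. Write $\mathcal{M}_a=\mathcal{M}\cap(\mathcal{P}_d\otimes\mathbb{C}^N)$ for the vector-valued polynomials lying in $\mathcal{M}$; it is a $\mathcal{P}_d$-submodule of $\mathcal{P}_d\otimes\mathbb{C}^N$ with $\mathcal{M}_a\subseteq\mathcal{M}$, so $fd(\mathcal{M}_a)\le fd(\mathcal{M})$. I would first isolate the two identities
\begin{equation*}
K(\mathcal{M}^\perp)=N-fd(\mathcal{M}),\qquad \chi(\mathcal{M}^\perp)=N-fd(\mathcal{M}_a).
\end{equation*}
Granting these, $\chi(\mathcal{M}^\perp)-K(\mathcal{M}^\perp)=fd(\mathcal{M})-fd(\mathcal{M}_a)\ge 0$, so Arveson's version of the Gauss-Bonnet-Chern formula for $\mathcal{M}^\perp$ holds precisely when $fd(\mathcal{M})=fd(\mathcal{M}_a)$.

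The elementary half is to check that $fd(\mathcal{M})=fd(\mathcal{M}_a)$ is equivalent to local algebraicity. If $\mathcal{M}$ is locally algebraic, take $\lambda_0\in\operatorname{mp}(\mathcal{M})$ and polynomials $p_1,\dots,p_{\mathfrak{m}}\in\mathcal{M}$ with $E_{\lambda_0}\mathcal{M}=\operatorname{span}\{p_i(\lambda_0)\}$. Since each $p_i\in\mathcal{M}_a$, one gets $fd(\mathcal{M}_a)\ge\dim E_{\lambda_0}\mathcal{M}_a\ge\dim E_{\lambda_0}\mathcal{M}=fd(\mathcal{M})$, forcing equality. Conversely, if $fd(\mathcal{M}_a)=fd(\mathcal{M})$, choose $\lambda_0$ at which $\mathcal{M}_a$ attains its maximal fiber (which, for an algebraic submodule of a free module, is its generic value, so that $fd(\mathcal{M}_a)=\operatorname{rank}_{\mathcal{P}_d}\mathcal{M}_a$); then $E_{\lambda_0}\mathcal{M}_a\subseteq E_{\lambda_0}\mathcal{M}$ with both of dimension $fd(\mathcal{M})$, hence they coincide, $\lambda_0\in\operatorname{mp}(\mathcal{M})$, and the values at $\lambda_0$ of finitely many polynomials in $\mathcal{M}_a\subseteq\mathcal{M}$ span $E_{\lambda_0}\mathcal{M}$, i.e. $\mathcal{M}$ is locally algebraic.

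For the curvature identity $K(\mathcal{M}^\perp)=N-fd(\mathcal{M})$ I would invoke the analytic theory of the curvature invariant: the radial boundary limits $\lim_{r\to 1^-}(1-r^2)\operatorname{trace}(F(r\xi))$ defining $K(\mathcal{M}^\perp)$ detect only the absolutely continuous contribution, which at generic interior points has rank $N-fd(\mathcal{M})$. This is the expected curvature-versus-fiber-dimension formula already implicit in the works cited in the introduction, and I would adapt it to the present (not necessarily graded) setting; morally it explains why a purely transcendental part of $\mathcal{M}$ (e.g. a singular inner factor in the one-variable model) contributes only to the singular boundary measure and is invisible to $K$.

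The main obstacle is the Euler-characteristic identity $\chi(\mathcal{M}^\perp)=N-fd(\mathcal{M}_a)$. Since a finite free resolution computes the rank as the alternating sum of its Betti numbers, $\chi(\mathcal{M}^\perp)=\operatorname{rank}_{\mathcal{P}_d}\mathbb{M}_{\mathcal{M}^\perp}$, so the task is to identify this rank with the generic fiber dimension of the purely algebraic quotient $(\mathcal{P}_d\otimes\mathbb{C}^N)/\mathcal{M}_a$, namely $N-\operatorname{rank}_{\mathcal{P}_d}\mathcal{M}_a=N-fd(\mathcal{M}_a)$. The delicate point is that $\mathbb{M}_{\mathcal{M}^\perp}$ is assembled analytically from the compressions $S_{z_i}$ and the defect operator $\Delta_{\mathcal{M}^\perp}$, whereas $\mathcal{M}_a$ is defined algebraically; I would bridge the two by localizing at a generic $\lambda\in\mathbb{B}_d$ and showing that the fiber of $\mathbb{M}_{\mathcal{M}^\perp}$ there has dimension $N-\dim E_\lambda\mathcal{M}_a$, using that $\operatorname{ran}\Delta_{\mathcal{M}^\perp}$ together with the $S$-action recovers exactly the polynomial obstructions encoded in $\mathcal{M}_a$ while not seeing the transcendental part of $\mathcal{M}$. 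Establishing this generic identification---controlling torsion and verifying rank additivity along the relevant exact sequences---is where I expect the real work to lie. Granting it, combining the two identities with the equivalence of the second paragraph yields $K(\mathcal{M}^\perp)=\chi(\mathcal{M}^\perp)$ if and only if $\mathcal{M}$ is locally algebraic, completing the proof.
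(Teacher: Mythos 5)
Your reduction is structurally sound and genuinely different from the paper's argument, but as written it has one real gap: the identity $\chi(\mathcal{M}^\perp)=N-fd(\mathcal{M}_a)$, which carries the entire theorem, is asserted and then ``granted'' rather than proved. The other ingredients are fine: your equivalence ``$fd(\mathcal{M})=fd(\mathcal{M}_a)$ iff $\mathcal{M}$ is locally algebraic'' is correct as written, and your curvature identity $K(\mathcal{M}^\perp)=N-fd(\mathcal{M})$ is exactly Lemma \ref{KNf} of the paper, deduced there from Greene--Richter--Sundberg (no graded hypothesis is needed, though converting the $\ker L$ term of their formula into $fd(\mathcal{M})$ takes the whole proof of that lemma, so this step is not free either).

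Fortunately, the missing step is true and the ``real work'' you anticipate is short; the torsion and rank-additivity issues you worry about are non-issues. By Lemma \ref{dimdefect} of the paper, $\operatorname{ran}\Delta_{\mathcal{M}^\perp}=P_{\mathcal{M}^\perp}(\mathbb{C}\otimes\mathbb{C}^N)$, and since $f\cdot P_{\mathcal{M}^\perp}(1\otimes e)=P_{\mathcal{M}^\perp}(f\otimes e)$ (invariance of $\mathcal{M}$), one gets $\mathbb{M}_{\mathcal{M}^\perp}=P_{\mathcal{M}^\perp}\bigl(\mathcal{P}_d\otimes\mathbb{C}^N\bigr)$. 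Hence $\Psi:\mathcal{P}_d\otimes\mathbb{C}^N\to\mathbb{M}_{\mathcal{M}^\perp}$, $\Psi(q)=P_{\mathcal{M}^\perp}q$, is a surjective $\mathcal{P}_d$-module homomorphism with $\ker\Psi=\{q\in\mathcal{P}_d\otimes\mathbb{C}^N:q\in\mathcal{M}\}=\mathcal{M}_a$, so
\begin{equation*}
\mathbb{M}_{\mathcal{M}^\perp}\cong\bigl(\mathcal{P}_d\otimes\mathbb{C}^N\bigr)/\mathcal{M}_a .
\end{equation*}
Localization at $\mathcal{P}_d\setminus\{0\}$ is exact, so ranks add along this exact sequence regardless of torsion: $\operatorname{rank}\mathbb{M}_{\mathcal{M}^\perp}=N-\operatorname{rank}\mathcal{M}_a$. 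Finally $\operatorname{rank}\mathcal{M}_a=fd(\mathcal{M}_a)$: $\mathcal{M}_a$ is finitely generated ($\mathcal{P}_d$ is Noetherian), the rank over the fraction field equals the generic rank of a generator matrix, and the generic value is attained off a proper algebraic set whose complement meets $\mathbb{B}_d$. Combined with $\chi(\mathcal{M}^\perp)=\operatorname{rank}\mathbb{M}_{\mathcal{M}^\perp}$, i.e.\ \eqref{oulashu}, which you also invoke, this closes the gap.

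With that inserted, your proof is complete and takes a different route from the paper's. The paper never establishes an unconditional formula for $\chi(\mathcal{M}^\perp)$: in Theorem \ref{thm2.611} it proves (3)$\Rightarrow$(1) by explicit cofactor expansions showing that any $N-l+1$ of the generators $P_{\mathcal{M}^\perp}(1\otimes e_i)$ are linearly dependent in $\mathbb{M}_{\mathcal{M}^\perp}$ (whence $\chi\le N-l$, and Arveson's inequality $K\le\chi$ finishes), and (1)$\Rightarrow$(2) by extracting from a maximal linearly independent family of those generators explicit polynomials $Q_s\in\mathcal{M}$ whose values span the fibers on an open set. Your route replaces this determinant bookkeeping with the single structural isomorphism above, re-derives $K\le\chi$ as a byproduct, and yields the sharper quantitative statement $\chi(\mathcal{M}^\perp)-K(\mathcal{M}^\perp)=fd(\mathcal{M})-fd(\mathcal{M}_a)\ge 0$, from which both implications, and also statement (2) of Theorem \ref{thm2.611} (take the Zariski-open set where a generating matrix of $\mathcal{M}_a$ has maximal rank), follow at once. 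What the paper's method buys in exchange is explicit witnesses: the open set $U$ and the spanning polynomials are constructed concretely rather than obtained from a rank count.
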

In \cite{Arveson curvature}, Arveson proved that the quotient modules of {\color{red}$H_d^2 \otimes \mathbb{C}^N$} have universal properties. To illustrate it, let $(T_1,T_2,\cdots, T_d)$ be {\color{red}a tuple of commuting operators} on a Hilbert space $\mathcal{H},$ which is called a $d$-contraction
if $\sum\limits_{i=1}^{d}T_iT_i^*\leq I.$ In this case, the Hilbert space $\mathcal{H}$ can be viewed as a module over the polynomial ring $\mathcal{P}_d$ with the module action given by
$$f\cdot\zeta=f(T_1,  \cdots, T_d )\zeta, \quad f\in \mathcal{P}_d, \quad \zeta\in \mathcal{H}.$$
Obviously, Drury-Arveson's module $H_d^2$ is a $d$-contractive Hilbert module and   Arveson \cite{Arveson curvature} proved that any pure $d$-contractive Hilbert module is unitarily equivalent to a quotient module of  $H_d^2 \otimes \mathbb{C}^N$ for some $N>0$.
\begin{rem}
\begin{itemize}
\item[(1)]When $N=1,$ Theorem \ref{thm1.1} recovers Arveson's result \cite[Theorem E and Proposition 7.4]{Arveson curvature} that $K(M^\perp) = \chi(M^\perp)$
if and only if $\mathcal{M}$ contains a nonzero polynomial.
\item[(2)]
\cite[Theorem 18]{Fang Xiang} demonstrates that Arveson's version of the Gauss-Bonnet-Chern formula holds when the submodules are generated by polynomials. Furthermore, as indicated by Remark \ref{recall}, this can be regarded as a corollary of Theorem \ref{thm1.1}.
\item[(3)]
For $N\geq 2,$ by \cite[Theorem B]{Arveson curvature} and Theorem \ref{thm1.1}, graded submodules in $ H^2_d\otimes \mathbb C^N$ are locally algebraic, and hence they have enough polynomials. It is noteworthy that, to the best of our knowledge, it was previously unknown whether a graded submodule in Arveson's framework contains a nonzero polynomial.
\end{itemize}
\end{rem}

Our next goal is to extend Arveson's version of the Gauss-Bonnet-Chern formula to Drury-Arveson modules in infinitely many variables.
 To do this, we need to define the asymptotic curvature invariant and the asymptotic Euler characteristic for the so-called  finite rank $\omega$-contractive Hilbert modules over {\color{red}the polynomial ring} in infinitely many variables.
{\color{red}A sequence of commuting operators} $\mathbb T_\omega=(T_1,\cdots, T_m,\cdots)$  is called an {\it $\omega$-contraction}, if
\begin{eqnarray}
\sum\limits_{i=1}^m T_iT_i^*\leq I, ~ \forall~ m\geq1.
\end{eqnarray}
Let $ \mathcal{P}_{\infty}=\mathop{\cup}\limits_{m=1}^{\infty} \mathcal{P}_m$ be {\color{red}the polynomial ring in infinitely many variables}.
  Any $\omega$-contraction $\mathbb T_\omega$ acting on a Hilbert space $\mathcal{H}$ gives rise to a $\mathcal{P}_{\infty}$-module structure on $\mathcal{H}$
in the natural way, as follows:
\begin{eqnarray}
p\cdot\zeta=p(T_1,  \cdots, T_m, \cdots )\zeta, \quad p\in \mathcal{P}_{\infty}, \quad \zeta\in \mathcal{H}.
\end{eqnarray}
In this case, $\mathcal{H}$ is called an {\it $\omega$-contractive Hilbert module}.
Obviously, if $\mathbb T_\omega$ is an $\omega$-contraction, then $\sum\limits_{i=1}^m T_iT_i^*$ converges in the strong operator topology as $m$ tends to infinity. The defect operator $\Delta_\mathcal{H}$ is defined by
$$
\Delta_\mathcal{H} = \Big( I - \text{SOT-}\lim\limits_{k\rightarrow \infty} \sum\limits_{i=1}^{k} T_{i} T_{i}^{*} \Big)^{1 / 2},
$$
and
{\it$\operatorname{dim} \operatorname{ran}\Delta_\mathcal{H}$ is called the rank of $\mathcal{H}.$}
Moreover, the $\omega$-contraction $\mathbb T_{\omega}$ gives rise to a completely  positive map $\phi$ on $\mathcal{B}(\mathcal{H})$ by way of
$$
\phi(A)=\text{SOT-}\lim\limits_{k\rightarrow \infty} \sum\limits_{i=1}^{k} T_{i}A T_{i}^{*}, \quad A \in \mathcal{B}(\mathcal{H}) .
$$
Obviously, $\phi^n(I)$ is decreasing.  An $\omega$-contractive Hilbert module is pure if
$$\text{SOT-} \lim\limits _{n \rightarrow \infty} \phi^{n}(I)=0.$$
As an example of $\omega$-contractive Hilbert modules, Arveson's module of infinitely many variables will be reviewd in Section 3 and denoted by $H^2.$ Analogous to the finitely-many-variables case, it will be proved in Section 4, any pure $\omega$-contractive Hilbert module of finite rank is unitarily equivalent to a quotient module $\cal M^\perp$ of $H^2 \otimes \mathbb{C}^N$
  for some $N \geq 1.$
As a reproducing kernel Hilbert space of holomorphic functions on the unit ball $\mathbb{B}$ in $\mathbb \ell^2$, with the reproducing kernel
$$
{\cal K}_\lambda(z)={1\over 1-\langle z,\lambda\rangle}, \quad \lambda\in\mathbb B,
$$
$H^2$ was introduced by Agler and M$^{\text{c}}$Carthy \cite{Pick interpolation} to study interpolation problems.

As in the finitely-many-variables case, for any submodule $\cal M$ in $H^2\otimes \mathbb C^N$, the compression of $M_{z_i}\otimes I_N$ on $\cal M^\perp$ is defined by
$$
S_{z_i} f=P_{\mathcal{M}^\perp}(M_{z_i}\otimes I_N) f, \, \forall f\in \mathcal{M}^\perp,
$$
and
$$
S(z)=\text{SOT-}\lim\limits_{k\rightarrow \infty} \sum\limits_{i=1}^{k}\bar{z}_{i}S_{z_i} .
$$
 Moreover, for any $z\in \mathbb{B}$,
the operator $I-S(z)$ is invertible. Define
$$
\label{F}F(z) \xi=\Delta_{\mathcal{M}^\bot}\left(I-S(z)^{*}\right)^{-1}(I-S(z))^{-1} \Delta_{\mathcal{M}^\bot} \xi, \quad \xi \in \text{ran}\,\Delta_{\mathcal{M}^\bot}.
$$
For any integer $m>0$, let
$$
K_{0}^m(z^{(m)})=\lim _{r \rightarrow 1}\left(1-r^{2}\right) \operatorname{trace} F(r z_1,\cdots,rz_m,0,0,\cdots)
$$
for almost every point $z^{(m)}=(z_1,\cdots,z_m) \in \partial \mathbb{B}_{m}$ relative to $d \sigma_m$ on $\partial \mathbb{B}_{m}.$ Set $$K_m({\mathcal{M}^\bot})=\int_{\partial \mathbb{B}_{m}} K_{0}^m(z^{(m)}) d \sigma_m.$$
In Section 4,
we will prove that $$K_{m+1}({\mathcal{M}^\bot})\leq K_{m}({\mathcal{M}^\bot})\leq\text{rank}({\mathcal{M}^\bot}).$$
Hence for ${\mathcal{M}^\bot},$ we define the asymptotic curvature invariant
by $$K({\mathcal{M}^\bot})=\lim \limits_{m \rightarrow \infty} K_{m}({\mathcal{M}^\bot}).$$
Seemingly, the asymptotic curvature invariant depends on the choice of the orthonormal basis of $\ell^2.$ However the following result will imply that the asymptotic curvature invariant is coordinate-free, which
is an infinitely-many-variables analogue of \cite[Theorem 5.2]{inner multipliers}.

\begin{thm}\label{thm1.3}
Let $\mathcal{M}\subseteq H^2\otimes \mathbb {C}^N$ be a submodule, then
$$K(\mathcal{M}^\bot)=N-fd(\mathcal{M}).$$
\end{thm}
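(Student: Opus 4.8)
The plan is to evaluate the monotone limit $K(\mathcal M^\perp)=\lim_{m\to\infty}K_m(\mathcal M^\perp)$ by comparing each truncation $K_m(\mathcal M^\perp)$ with a genuine finitely-many-variables curvature, for which the identity ``curvature $=N-$ fiber dimension'' is already available. For each $m$ I would introduce the finite-variable submodule $\mathcal M_m:=\overline{P_{H_m^2\otimes\mathbb C^N}\mathcal M}\subseteq H_m^2\otimes\mathbb C^N$; it is a submodule because $M_{z_i}$ and $M_{z_i}^*$ preserve $H_m^2\otimes\mathbb C^N$ for $i\le m$, so $P_{H_m^2\otimes\mathbb C^N}$ commutes with $M_{z_i}$ there. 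The first point is to identify its fiber dimension: since the reproducing kernel $\mathcal K_{(z^{(m)},0,0,\cdots)}$ lies in $H_m^2$, evaluation of any $f\in\mathcal M$ on the slice $\{z_{m+1}=z_{m+2}=\cdots=0\}$ factors through $P_{H_m^2\otimes\mathbb C^N}$, whence $E_{z^{(m)}}\mathcal M_m=E_{(z^{(m)},0,\cdots)}\mathcal M$ and therefore $fd(\mathcal M_m)=fd_m$, where $fd_m:=\sup\{\dim E_\lambda\mathcal M:\lambda\in\mathbb B,\ \lambda_i=0\ \text{for}\ i>m\}$. By the finitely-many-variables form of the identity (cf. \cite{CF, GRS, Fang Xiang}) this gives $K(\mathcal M_m^\perp)=N-fd_m$.

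Next I would show $fd_m\uparrow fd(\mathcal M)$. The map $\lambda\mapsto\dim E_\lambda\mathcal M$ is lower semicontinuous: if $f_1,\dots,f_k\in\mathcal M$ satisfy that $f_1(\lambda_0),\dots,f_k(\lambda_0)$ are linearly independent, the same holds on a neighbourhood of $\lambda_0$, so $\{\lambda:\dim E_\lambda\mathcal M\ge k\}$ is open. Hence $\text{mp}(\mathcal M)$ is open and nonempty, and since the finitely supported points are dense in $\mathbb B\subseteq\ell^2$, it contains a point supported in the first $m_0$ coordinates. Thus $fd_m=fd(\mathcal M)$ for all $m\ge m_0$, and in particular $\lim_{m\to\infty}\big(N-fd_m\big)=N-fd(\mathcal M)$.

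The heart of the matter, and the step I expect to be the main obstacle, is to prove $\lim_{m\to\infty}\big|K_m(\mathcal M^\perp)-K(\mathcal M_m^\perp)\big|=0$. The two invariants are built from genuinely different operators: $K_m(\mathcal M^\perp)$ uses the infinitely-many-variables compressions $S_{z_i}$ on $\mathcal M^\perp$ together with the full defect $\Delta_{\mathcal M^\perp}=\big(I-\text{SOT-}\sum_{i\ge1}S_{z_i}S_{z_i}^*\big)^{1/2}$, whereas $K(\mathcal M_m^\perp)$ uses the $m$-variable compressions and defect of $\mathcal M_m^\perp$, and these need not agree because $P_{\mathcal M^\perp}$ and $P_{H_m^2\otimes\mathbb C^N}$ do not commute in general. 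Writing, by cyclicity of the trace,
$$(1-r^2)\operatorname{trace}F(rz^{(m)})=(1-r^2)\operatorname{trace}\Big[\Delta_{\mathcal M^\perp}^2\,(I-rS^{(m)})^{-1}(I-rS^{(m)*})^{-1}\Big],\qquad S^{(m)}=\sum_{i\le m}\bar z_iS_{z_i},$$
I would compare this integrand with the corresponding one for $K(\mathcal M_m^\perp)$; the discrepancy is governed by the tail $E_m:=\text{SOT-}\sum_{i>m}S_{z_i}S_{z_i}^*$, which decreases to $0$ in the strong operator topology, and by the difference between the two families of compressions. The two features making such an estimate feasible are that $\dim\operatorname{ran}\Delta_{\mathcal M^\perp}\le N$ (so every trace is uniformly bounded by $N$, giving $0\le K_m(\mathcal M^\perp)\le N$) together with the monotonicity $K_{m+1}(\mathcal M^\perp)\le K_m(\mathcal M^\perp)$ established in Section 4; these supply exactly the uniform control needed to run a dominated-convergence argument and to interchange the limits $r\to1$ and $m\to\infty$.

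Combining the three steps yields $K(\mathcal M^\perp)=\lim_{m\to\infty}K_m(\mathcal M^\perp)=\lim_{m\to\infty}K(\mathcal M_m^\perp)=\lim_{m\to\infty}\big(N-fd_m\big)=N-fd(\mathcal M)$. Since the right-hand side makes no reference to the chosen orthonormal basis of $\ell^2$, this simultaneously confirms the coordinate-freeness of the asymptotic curvature invariant anticipated before the statement. The genuinely hard point is the limit interchange of the third paragraph; the remaining ingredients are either the cited finitely-many-variables identity or soft semicontinuity-and-density arguments.
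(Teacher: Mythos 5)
Your skeleton is sound, and two of its three ingredients check out: the identification $E_{z^{(m)}}\mathcal{M}_m=E_{(z^{(m)},0,\dots)}\mathcal{M}$ (hence $fd(\mathcal{M}_m)=fd_m$) is correct, and your semicontinuity-plus-density proof that $fd_m=fd(\mathcal{M})$ for all large $m$ is exactly the content of the paper's Lemma \ref{Theorem3.6} (indeed your argument is cleaner than the determinant-convergence argument given there). The genuine gap is the step you yourself call the heart of the matter: the comparison $\lvert K_m(\mathcal{M}^\perp)-K(\mathcal{M}_m^\perp)\rvert\to 0$ is asserted, not proved, and the tools you invoke cannot deliver it. For fixed $m$ the discrepancy between the two quantities is structural, not asymptotic: $K_m(\mathcal{M}^\perp)$ is built from the restrictions of $S_{z_1},\dots,S_{z_m}$ and the \emph{full} defect $\Delta_{\mathcal{M}^\perp}$ (a hybrid object which is not the curvature of any finite-rank $m$-contraction, since the natural $m$-variable defect $I-\sum_{i\le m}S_{z_i}S_{z_i}^*$ of $\mathcal{M}^\perp$ has infinite rank in general), while $K(\mathcal{M}_m^\perp)$ is built from the compressions onto $\mathcal{M}_m^\perp=\mathcal{M}^\perp\cap(H_m^2\otimes\mathbb{C}^N)$; the difference is governed by the failure of $P_{\mathcal{M}^\perp}$ to commute with $P_{H_m^2\otimes\mathbb{C}^N}$, about which the SOT-decay of the tail $\sum_{i>m}S_{z_i}S_{z_i}^*$ says nothing. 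Dominated convergence (with the bound $N$) only justifies interchanging $\lim_{r\to 1}$ with the integral over $\partial\mathbb{B}_m$, and the monotonicity $K_{m+1}\le K_m$ you lean on is itself obtained in the paper as a corollary of the exact formula $K_m(\mathcal{H})=\operatorname{rank}\mathcal{H}-\sup\{\dim E_\lambda \ker L:\lambda\in\mathbb{B}_m\}$ of Lemma \ref{lem3.6} --- that is, of a form of the very identity you are trying to circumvent --- so invoking it skirts circularity without closing the gap.

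What fills this gap in the paper is an exact identity for each fixed $m$, not a limiting estimate: one constructs the canonical coisometry $L:H^2\otimes\operatorname{ran}\Delta_{\mathcal{M}^\perp}\to\mathcal{M}^\perp$ (Theorem \ref{2.6}), the complementary multiplier $\Phi$ with $L^*L+\Phi\Phi^*=I$ (Lemma \ref{lem2.7}), the factorization $\left(1-|z|^2\right)F(z)=I-\Phi(z)\Phi(z)^*$ (Lemma \ref{thm5.9}), and --- the real work --- the boundary-value theorem that for $\sigma_m$-a.e.\ $z^{(m)}\in\partial\mathbb{B}_m$ the operator $\Phi(z^{(m)})$ is a partial isometry of maximal rank (Proposition \ref{KH}, the Greene--Richter--Sundberg machinery run on the slices $\mathbb{B}_m$). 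Integration then gives $K_m(\mathcal{M}^\perp)=\operatorname{rank}\Delta_{\mathcal{M}^\perp}-\sup\{\dim E_\lambda\ker L:\lambda\in\mathbb{B}_m\}$ exactly, and the constants-reduction of Lemma \ref{KNf} converts this into $N-fd_m$, which coincides with your $K(\mathcal{M}_m^\perp)$. So the identity you hoped to obtain softly is true, but it is precisely as deep as the theorem itself; without Proposition \ref{KH} or an equivalent boundary-value argument, your third step remains an unproved claim and the proposal does not constitute a proof.
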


Here, $ E_{\lambda} \mathcal{M}$ and $fd(\mathcal{M})$ can be defined similarly to
\eqref{bianhao1} and \eqref{bianhao11}. Next, we define the asymptotic Euler characteristic. For an $\omega$-contractive
Hilbert module $\mathcal{H}$ of finite rank,
 let
$$
\mathbb{M}_\mathcal{H}^m=\operatorname{span}\left\{f \cdot \Delta_\mathcal{H} \zeta : f \in \mathcal{P}_m, \zeta \in \mathcal{H}\right\}.
$$
Then $\mathbb{M}_\mathcal{H}^m$ is a finitely generated $\mathcal{P}_m$-module, which
has finite free resolutions.
   And hence
  $\chi_m(\mathcal{H})$ can be defined similarly in \eqref{chenlaoshi}.
  It will be proved in Section 4 that $\chi_m(\mathcal{H})$ is decreasing on $m$, and the asymptotic Euler characteristic will be defined as
 \begin{eqnarray}
 \chi(\mathcal{H})=\lim\limits_{m\to\infty} \chi_m(\mathcal{H}).
 \end{eqnarray}
The following result solves Problem \ref{thm1.1} in the infinitely-many-variables case.
\begin{thm}\label{thm1.5}
For a submodule $\mathcal{M}\subseteq H^2 \otimes \mathbb{C}^N$, Arveson's version of the Gauss-Bonnet-Chern formula holds true for $\mathcal{M}^\perp$ iff $\mathcal{M}$ is locally algebraic.
\end{thm}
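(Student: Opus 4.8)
The plan is to reduce the statement to the finitely-many-variables Theorem~\ref{thm1.1} together with the curvature formula of Theorem~\ref{thm1.3}. By Theorem~\ref{thm1.3} the curvature side equals $K(\mathcal{M}^\perp)=N-fd(\mathcal{M})$, so the Gauss-Bonnet-Chern identity $K(\mathcal{M}^\perp)=\chi(\mathcal{M}^\perp)$ is equivalent to the algebraic equality $\chi(\mathcal{M}^\perp)=N-fd(\mathcal{M})$; it therefore suffices to prove that this equality holds precisely when $\mathcal{M}$ is locally algebraic. To put local algebraicity in a form matched to the fiber dimension, I would first record the reformulation: writing $\mathcal{M}_{\mathrm{alg}}$ for the submodule generated by the polynomials contained in $\mathcal{M}$, the submodule $\mathcal{M}$ is locally algebraic if and only if $fd(\mathcal{M}_{\mathrm{alg}})=fd(\mathcal{M})$. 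This is elementary: since $\mathcal{M}_{\mathrm{alg}}\subseteq\mathcal{M}$ the fiber dimensions are ordered, and comparing fibers at a common point of the generic sets $\operatorname{mp}(\mathcal{M}_{\mathrm{alg}})$ and $\operatorname{mp}(\mathcal{M})$ yields the equivalence.

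The engine of the proof is a level-by-level passage to finitely many variables. Recall that $\chi_m(\mathcal{M}^\perp)=\operatorname{rank}_{\mathcal{P}_m}\mathbb{M}^m_{\mathcal{M}^\perp}$ is a decreasing sequence of integers, hence eventually constant and equal to $\chi(\mathcal{M}^\perp)$; likewise $\sup_{\lambda\in\mathbb{B}_m}\dim E_\lambda\mathcal{M}$ increases with $m$ to $fd(\mathcal{M})$ and, being integer-valued, equals $fd(\mathcal{M})$ for all large $m$. Moreover, because $\operatorname{mp}(\mathcal{M})$ is generic and the finitely-supported points are dense in $\mathbb{B}$, whenever $\mathcal{M}$ is locally algebraic the witnessing polynomials may be taken to lie in finitely many variables, say in $\mathcal{P}_{m_0}$. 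I would then attach to each level $m$ the genuine finite-variable submodule $\mathcal{M}_{(m)}=\overline{R_m\mathcal{M}}\subseteq H_m^2\otimes\mathbb{C}^N$, where $R_m$ restricts to $\mathbb{B}_m$, and prove the dictionary $\chi_m(\mathcal{M}^\perp)=\chi(\mathcal{M}_{(m)}^\perp)$ and $fd(\mathcal{M}_{(m)})=\sup_{\lambda\in\mathbb{B}_m}\dim E_\lambda\mathcal{M}$, so that Theorem~\ref{thm1.1} becomes applicable at every finite level.

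Granting the dictionary, both directions follow by letting $m\to\infty$. If $\mathcal{M}$ is locally algebraic, the witnessing polynomials force $\mathcal{M}_{(m)}$ to be locally algebraic for every $m\ge m_0$, whence Theorem~\ref{thm1.1} gives $\chi_m=N-fd(\mathcal{M}_{(m)})=N-fd(\mathcal{M})$ for all large $m$, and passing to the limit yields $\chi(\mathcal{M}^\perp)=N-fd(\mathcal{M})=K(\mathcal{M}^\perp)$. Conversely, if the identity holds then $\chi_m=N-fd(\mathcal{M}_{(m)})$ once $m$ is large, so Theorem~\ref{thm1.1} forces $\mathcal{M}_{(m)}$ to be locally algebraic; using the coincidence $E_{\lambda_0}\mathcal{M}_{(m)}=E_{\lambda_0}\mathcal{M}$ for $\lambda_0\in\mathbb{B}_m$ together with the reformulation $fd(\mathcal{M}_{\mathrm{alg}})=fd(\mathcal{M})$, one transfers the witnessing polynomials back into $\mathcal{M}$ and concludes that $\mathcal{M}$ is locally algebraic.

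The hard part will be establishing the dictionary $\chi_m(\mathcal{M}^\perp)=\chi(\mathcal{M}_{(m)}^\perp)$. The module $\mathbb{M}^m_{\mathcal{M}^\perp}$ is generated over $\mathcal{P}_m$ by the transcendental subspace $\operatorname{ran}\Delta_{\mathcal{M}^\perp}$ under the coordinate compressions $S_{z_1},\dots,S_{z_m}$ of the infinite-variable quotient, so identifying its $\mathcal{P}_m$-rank with the Euler characteristic of a bona fide finite-variable quotient module requires controlling the passage from $\Delta_{\mathcal{M}^\perp}$ to $\Delta_{\mathcal{M}_{(m)}^\perp}$ and verifying that discarding the tail $\sum_{i>m}S_{z_i}S_{z_i}^*$ leaves the generic fiber dimension unchanged. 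A second delicate point is the lifting of local algebraicity across the restriction map in the converse direction, since a priori the polynomials witnessing local algebraicity of $\mathcal{M}_{(m)}$ lie in $\overline{R_m\mathcal{M}}$ rather than in $\mathcal{M}$; the fiber coincidence above is what closes this gap. The integrality-driven stabilization of the two monotone sequences is what finally legitimizes interchanging the limit $m\to\infty$ with the finite-variable theorem.
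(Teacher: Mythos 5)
Your plan hinges on the ``dictionary'' $\chi_m(\mathcal{M}^\perp)=\chi(\mathcal{M}_{(m)}^\perp)$ for the restricted submodules $\mathcal{M}_{(m)}=\overline{R_m\mathcal{M}}\subseteq H_m^2\otimes\mathbb{C}^N$, and you correctly single it out as the hard step; the problem is that it is not merely hard but false, and the paper's own Section 5 example refutes it. Take $N=1$ and $\mathcal{M}=\mathcal{M}_f$, the submodule of $H^2$ generated by $f=\sum_{i\geq 1}\frac{1}{(i+1)!}z_i$. A $\mathcal{P}_m$-relation in $\mathbb{M}^m_{\mathcal{M}^\perp}$, say $q\cdot P_{\mathcal{M}^\perp}1=P_{\mathcal{M}^\perp}q=0$ with $q\in\mathcal{P}_m$, means exactly $q\in\mathcal{M}\cap\mathcal{P}_m$; since $\mathcal{M}_f$ contains no nonzero polynomial (Lemma \ref{lem5.6}), the generator $P_{\mathcal{M}^\perp}1$ is free and $\chi_m(\mathcal{M}^\perp)=\operatorname{rank}\mathbb{M}^m_{\mathcal{M}^\perp}=1$ for \emph{every} $m$, whence $\chi(\mathcal{M}^\perp)=1$. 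On the other hand, restriction to $\{z_{m+1}=z_{m+2}=\cdots=0\}$ is multiplicative, so $\mathcal{M}_{(m)}=\overline{f_m H_m^2}$ with $f_m=\sum_{i=1}^m\frac{1}{(i+1)!}z_i$ a polynomial, and therefore $\chi(\mathcal{M}_{(m)}^\perp)=0$ for every $m$. The dictionary fails at every level, and so does the limit identity $\chi(\mathcal{M}^\perp)=\lim_m\chi(\mathcal{M}_{(m)}^\perp)$; no appeal to eventual stabilization of the two monotone integer sequences can repair this, because the two sequences converge to different numbers. The same example destroys your converse step as well: each $\mathcal{M}_{(m)}$ is algebraic (generated by $f_m$), and the fiber coincidence $E_\lambda\mathcal{M}_{(m)}=E_\lambda\mathcal{M}$ for $\lambda\in\mathbb{B}_m$ does hold, yet $\mathcal{M}_f$ is not locally algebraic, since local algebraicity requires polynomials lying in $\mathcal{M}$ itself and the witnesses $f_m$ belong only to $\mathcal{M}_{(m)}$. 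Fiber coincidence cannot transport them back. Indeed, if your scheme worked it would prove that Arveson's formula holds for $\mathcal{M}_f^\perp$, contradicting the conclusion $K(\mathcal{M}_f^\perp)<\chi(\mathcal{M}_f^\perp)$ drawn in Section 5 from Theorem \ref{thm1.5} and Proposition \ref{KHXH}.

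The structural reason for the failure is that the relations of the $\mathcal{P}_m$-module $\mathbb{M}^m_{\mathcal{M}^\perp}$ are detected by polynomial tuples belonging to $\mathcal{M}$, whereas the restriction $R_m$ creates new polynomial members (elements of $\overline{R_m\mathcal{M}}$ that do not lift to $\mathcal{M}$) and can therefore strictly lower the Euler characteristic. This is precisely why the paper never passes to $\overline{R_m\mathcal{M}}$: it keeps the infinite-variable quotient throughout, observes that the finitely many witnessing polynomials automatically lie in some $\mathcal{P}_{m_0}$, and reruns the determinant--cofactor argument of Theorem \ref{thm2.611} inside $\mathbb{M}^m_{\mathcal{M}^\perp}$ for $m>m_0$ to manufacture explicit relation tuples \emph{inside} $\mathcal{M}$, giving $\chi_m(\mathcal{M}^\perp)\leq N-fd(\mathcal{M})$; combined with $K(\mathcal{M}^\perp)\leq\chi(\mathcal{M}^\perp)$ (Proposition \ref{KHXH}) and $K(\mathcal{M}^\perp)=N-fd(\mathcal{M})$ (Theorem \ref{comKH} --- the one ingredient of your plan that does match the paper), this yields the formula, and conversely a maximal independent set in $\mathbb{M}^m_{\mathcal{M}^\perp}$ produces the tuples $Q_s\in\mathcal{M}$ witnessing local algebraicity. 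Any salvage of your two-step reduction must replace $\overline{R_m\mathcal{M}}$ by a finite-variable object that remembers membership in $\mathcal{M}$ --- which is exactly what $\mathbb{M}^m_{\mathcal{M}^\perp}$ already is.
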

 It is well known that any nontrivial homogeneous submodule $\mathcal{M}$ of $H_d^2 $
  must be generated by polynomials. Consequently, one has $K(\mathcal{M}^\bot) = \chi(\mathcal{M}^\bot).$
 However, in the infinitely-many-variables case, we present an example of a homogeneous submodule in $H^2$ that does not contain any nonzero polynomial.
By Theorem \ref{thm1.5}, Arveson's version of the Gauss-Bonnet-Chern formula
may not hold for homogeneous quotient modules in $H^2$.


At last, we will study $\it{the \,finite\, defect\, problem}$ for submodules in $H^2.$ {\color{red}Such a problem for submodules in $H_d^2$
 was raised by Arveson in \cite[Page 226]{Arveson curvature} and also makes sense for other reproducing kernel Hilbert spaces, such as Hardy space and Bergman space over the polydisk and the unit ball, as studied by Guo \cite{Gu2}.}
\begin{Problem}\label{problem2}
   Is the rank of each nonzero submodule of $H_d^2$ that has infinite codimension in $H_d^2$
  infinite?
\end{Problem}
By showing that for $d \geq 2$, any nonzero submodule $\mathcal{M} \subseteq H_d^2$
  of finite rank has finite codimension in $H_d^2,$ Guo \cite{Gu} provided a positive answer to Problem \ref{problem2}. In the infinitely-many-variables case,
we have the following theorem.
\begin{thm}\label{fulelao}
If $\mathcal{M}$ is a nonzero submodule of $H^2$ of finite rank, then $\mathcal{M}=H^2.$
\end{thm}

This paper is organized as follows.
In Section 2, we prove Theorem \ref{thm1.1}, where a key step is a new algebraic representation of the Euler characteristic.
In Section 3, fundamental properties of Drury-Arveson's space of infinitely many variables are introduced.
In Section 4, we define the asymptotic curvature invariant and the asymptotic Euler characteristic for $\omega$-contractive Hilbert modules of finite rank, and then prove Theorem \ref{thm1.5}.
In Section 5, we present an example of a homogeneous submodule in $H^2$
  that contains no nonzero polynomials, demonstrating that Arveson's version of the Gauss-Bonnet-Chern formula may not necessarily hold for homogeneous quotient modules in $H^2$.
  In Section 6, we prove Theorem \ref{fulelao}.

\noindent

\section{Arveson's version of the Gauss-Bonnet-Chern formula}
This section is dedicated to proving our main result, Theorem \ref{thm1.1}.
The main tool used to deal with Theorem \ref{thm1.1} is the representation of the Euler characteristic in terms of the rank of finitely generated modules over an integral domain.
Let $\mathfrak{R}$ be an integral domain, and $S = \mathfrak{R} \setminus \{0\}.$ Then $S^{-1}\mathfrak{R}$
  is the field of fractions of $\mathfrak{R}$. For a finitely generated module $F$ over $\mathfrak{R}$, let $S^{-1}F$
  denote the module of fractions. It is evident that $S^{-1}F$
  forms a vector space over $S^{-1} \mathfrak{R}$. We denote the rank of $F$ by $\dim_{S^{-1} \mathfrak{R}}(S^{-1}F)$, as introduced in \cite[Page 873]{Rotman}. Obviously, in \eqref{Euler-ch}, $\beta_k$ is the rank of $F_k.$
For a $d$-contractive Hilbert module $\mathcal{H},$ the defect operator is defined by
$$
\Delta_\mathcal{H}=\left(I-T_{1} T_{1}^{*}-\cdots-T_{d} T_{d}^{*}\right)^{1 / 2}.
$$
The $d$-contractive Hilbert module $\mathcal{H}$ is of finite rank if $\Delta_\mathcal{H}$ is a finite rank operator. Similar to
\eqref{Euler-ch}, the Euler characteristic can also be defined.
Recall that in \cite{Arveson curvature} Arveson defined
\begin{eqnarray}\label{M_{H,d}}
\mathbb{M}_{\mathcal{H}}=\operatorname{span}\left\{f \cdot \Delta_\mathcal{H} \zeta \mid f \in \mathcal{P}_d, \zeta \in \mathcal{H}\right\}.
\end{eqnarray}
By \cite[Lemma 10.1]{Rotman} and \cite[Proposition 2.11]{Atiyah}, it can be clearly seen that
\begin{equation}\label{oulashu}
  \chi(\mathcal{H})=\operatorname{rank}(\mathbb{M}_{\mathcal{H}}),
\end{equation}
 which plays the key role in the study of Arveson's version of the Gauss-Bonnet-Chern formula.
Before proving Theorem \ref{thm1.1},
 we need some lemmas.

\begin{lem}\label{dimdefect}
Let $\mathcal{M}$ be a
submodule in $H_d^2 \otimes \mathbb{C}^N$ such that $$\operatorname{dim}~\mathcal{M}\cap (\mathbb{C} \otimes \mathbb{C}^N)=n.$$ Then for an orthonormal basis $\{1 \otimes e_i\}_{i=n+1}^{N}$ of
$\mathbb{C} \otimes \mathbb{C}^N \ominus (\mathcal{M}\cap (\mathbb{C} \otimes \mathbb{C}^N)),$
 $\{P_{\mathcal{M}^\perp}(1 \otimes e_i)\}_{i=n+1}^{N}$ is a linear basis of $\operatorname{ran}\,\Delta_{\mathcal{M}^\perp},$ in particular, $\operatorname{rank}\,\Delta_{\mathcal{M}^\perp}=N-n.$
\end{lem}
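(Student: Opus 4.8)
The plan is to compute the defect operator $\Delta_{\mathcal{M}^\perp}$ explicitly and to identify its range with $P_{\mathcal{M}^\perp}(\mathbb{C}\otimes\mathbb{C}^N)$. The starting point is the fundamental fact that the Drury-Arveson module $H_d^2$ is a pure row contraction of defect rank one, namely $I-\sum_{i=1}^d M_{z_i}M_{z_i}^*=P_{\mathbb{C}}$, the projection onto the constant functions; tensoring with $\mathbb{C}^N$ then gives $I-\sum_{i=1}^d(M_{z_i}\otimes I_N)(M_{z_i}\otimes I_N)^*=P_{\mathbb{C}\otimes\mathbb{C}^N}$, the projection onto $\mathbb{C}\otimes\mathbb{C}^N$.

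First I would record that, since $\mathcal{M}$ is a submodule, $\mathcal{M}^\perp$ is co-invariant under each $M_{z_i}\otimes I_N$, so that $S_{z_i}^{*}=(M_{z_i}\otimes I_N)^{*}|_{\mathcal{M}^\perp}$. Writing $T_i=M_{z_i}\otimes I_N$, this yields for every $f\in\mathcal{M}^\perp$
$$
\sum_{i=1}^d S_{z_i}S_{z_i}^{*}f=P_{\mathcal{M}^\perp}\Big(\sum_{i=1}^d T_iT_i^{*}\Big)f=P_{\mathcal{M}^\perp}\big(I-P_{\mathbb{C}\otimes\mathbb{C}^N}\big)f=f-P_{\mathcal{M}^\perp}P_{\mathbb{C}\otimes\mathbb{C}^N}f,
$$
and hence, as an operator on $\mathcal{M}^\perp$,
$$
\Delta_{\mathcal{M}^\perp}^2=I-\sum_{i=1}^d S_{z_i}S_{z_i}^{*}=P_{\mathcal{M}^\perp}P_{\mathbb{C}\otimes\mathbb{C}^N}P_{\mathcal{M}^\perp}\big|_{\mathcal{M}^\perp}.
$$
This is manifestly a positive operator of finite rank, confirming that $\Delta_{\mathcal{M}^\perp}$ is well defined with finite-dimensional range.

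Next I would determine the range. Setting $C=P_{\mathbb{C}\otimes\mathbb{C}^N}|_{\mathcal{M}^\perp}\colon\mathcal{M}^\perp\to\mathbb{C}\otimes\mathbb{C}^N$, a short adjoint computation gives $C^{*}v=P_{\mathcal{M}^\perp}v$ for $v\in\mathbb{C}\otimes\mathbb{C}^N$, so that $\Delta_{\mathcal{M}^\perp}^2=C^{*}C$ and therefore $\overline{\operatorname{ran}\Delta_{\mathcal{M}^\perp}}=\overline{\operatorname{ran}\Delta_{\mathcal{M}^\perp}^2}=\overline{\operatorname{ran}C^{*}}=\overline{P_{\mathcal{M}^\perp}(\mathbb{C}\otimes\mathbb{C}^N)}$. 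As $\mathbb{C}\otimes\mathbb{C}^N$ is finite-dimensional, this image is already closed, giving $\operatorname{ran}\Delta_{\mathcal{M}^\perp}=P_{\mathcal{M}^\perp}(\mathbb{C}\otimes\mathbb{C}^N)$. Decomposing $\mathbb{C}\otimes\mathbb{C}^N=(\mathcal{M}\cap(\mathbb{C}\otimes\mathbb{C}^N))\oplus\big(\mathbb{C}\otimes\mathbb{C}^N\ominus(\mathcal{M}\cap(\mathbb{C}\otimes\mathbb{C}^N))\big)$ and observing that $P_{\mathcal{M}^\perp}$ annihilates the first summand, I obtain $\operatorname{ran}\Delta_{\mathcal{M}^\perp}=\operatorname{span}\{P_{\mathcal{M}^\perp}(1\otimes e_i)\}_{i=n+1}^{N}$.

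Finally, the linear independence of these vectors completes the argument: if $\sum_{i=n+1}^{N}c_i\,P_{\mathcal{M}^\perp}(1\otimes e_i)=0$, then $\sum_i c_i(1\otimes e_i)\in\mathcal{M}\cap(\mathbb{C}\otimes\mathbb{C}^N)$, while by construction it also lies in $\mathbb{C}\otimes\mathbb{C}^N\ominus(\mathcal{M}\cap(\mathbb{C}\otimes\mathbb{C}^N))$, which is orthogonal to that space; hence the vector vanishes and all $c_i=0$. Thus $\{P_{\mathcal{M}^\perp}(1\otimes e_i)\}_{i=n+1}^{N}$ is a basis of $\operatorname{ran}\Delta_{\mathcal{M}^\perp}$ and $\operatorname{rank}\Delta_{\mathcal{M}^\perp}=N-n$. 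The only substantive step is the defect computation; once $\Delta_{\mathcal{M}^\perp}^2$ is identified with $P_{\mathcal{M}^\perp}P_{\mathbb{C}\otimes\mathbb{C}^N}P_{\mathcal{M}^\perp}$, the remainder is finite-dimensional bookkeeping and I anticipate no serious obstacle.
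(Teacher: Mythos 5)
Your proof is correct, and its two endpoints coincide with the paper's: the defect computation $\Delta_{\mathcal{M}^\perp}^2 = P_{\mathcal{M}^\perp}\left(E_0\otimes I_N\right)P_{\mathcal{M}^\perp}\big|_{\mathcal{M}^\perp}$ (your $P_{\mathbb{C}\otimes\mathbb{C}^N}$ is the paper's $E_0\otimes I$) is exactly the paper's first step, and your closing linear-independence argument for $\{P_{\mathcal{M}^\perp}(1\otimes e_i)\}_{i=n+1}^{N}$ is the paper's first claim verbatim. Where you genuinely diverge is the middle step, identifying $\operatorname{ran}\,\Delta_{\mathcal{M}^\perp}$: you factor $\Delta_{\mathcal{M}^\perp}^2 = C^{*}C$ with $C = P_{\mathbb{C}\otimes\mathbb{C}^N}\big|_{\mathcal{M}^\perp}$, compute $C^{*} = P_{\mathcal{M}^\perp}\big|_{\mathbb{C}\otimes\mathbb{C}^N}$, and invoke the standard identity $\overline{\operatorname{ran}}\,(C^{*}C) = \overline{\operatorname{ran}}\,(C^{*})$ (equivalently $\ker C^{*}C = \ker C$), which together with finite-dimensionality yields $\operatorname{ran}\,\Delta_{\mathcal{M}^\perp} = P_{\mathcal{M}^\perp}(\mathbb{C}\otimes\mathbb{C}^N)$ in one stroke. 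The paper instead proves a second, harder linear-independence claim --- for the family $\{P_{\mathcal{M}^\perp}(E_0\otimes I)P_{\mathcal{M}^\perp}(1\otimes e_i)\}_{i=n+1}^{N}$, via a norm computation --- and then sandwiches the subspace $\{P_{\mathcal{M}^\perp}(E_0\otimes I)f : f\in\mathcal{M}^\perp\}$ between two spans, concluding by a dimension count that all three coincide. Your route buys economy: the second independence claim and the dimension sandwich disappear, replaced by a one-line operator-theoretic fact; the only obligations it adds are the range equalities $\operatorname{ran}\,\Delta_{\mathcal{M}^\perp} = \operatorname{ran}\,\Delta_{\mathcal{M}^\perp}^2 = \operatorname{ran}\,(C^{*}C) = \operatorname{ran}\,(C^{*})$, which you correctly discharge by noting that all the ranges involved sit inside finite-dimensional (hence closed) subspaces. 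Both arguments produce the same basis and the same conclusion $\operatorname{rank}\,\Delta_{\mathcal{M}^\perp} = N-n$.
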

\begin{proof}
At first,
$$
\begin{aligned}
\Delta_{\mathcal{M}^\perp}^2&=I_{\mathcal{M}^\perp}-\sum\limits_{i=1}^{d}S_{z_i}S_{z_i}^*\\&=
I_{\mathcal{M}^\perp}-\sum\limits_{i=1}^{d}P_{\mathcal{M}^\bot} M_{z_i}P_{\mathcal{M}^\bot} M_{z_i}^*P_{M^\bot}|_{M^\bot}
\\&=
P_{\mathcal{M}^\bot}\left(I-\sum\limits_{i=1}^{d} M_{z_i}M_{z_i}^*\right)\bigg|_{\mathcal{M}^\bot}
\\&=P_{\mathcal{M}^\bot}(E_0 \otimes I)P_{\mathcal{M}^\bot}|_{\mathcal{M}^\bot},
\end{aligned}
$$
where $E_{0}$ is the orthogonal projection from $H_d^{2}$ onto the one-dimensional space of constant functions.
Now, take an orthonormal basis $\{1 \otimes e_i\}_{i=1}^{n}$ of $ \mathcal{M}\cap (\mathbb{C}\otimes \mathbb{C}^N),$ which extends to an orthonormal basis $\{1 \otimes e_i\}_{i=1}^{N}$ of $\mathbb{C}\otimes \mathbb{C}^N.$

Next, we claim that both $\{P_{\mathcal{M}^\bot}(1\otimes e_i)\}_{i=n+1}^{N}$ and $\{P_{\mathcal{M}^\bot}(E_0 \otimes I)P_{\mathcal{M}^\bot}(1\otimes e_i)\}_{i=n+1}^{N}$
are linearly independent. Firstly, assume that for $\mu_i \in \mathbb{C}$ such that
$$\sum\limits_{i=n+1}^{N}\mu_i P_{\mathcal{M}^\bot} (1 \otimes e_i)=0,$$ and we have $\sum\limits_{i=n+1}^{N}\mu_i (1 \otimes e_i) \in \mathcal{M} \cap (\mathbb{C}\otimes \mathbb{C}^N).$
By the choice of $\{1 \otimes e_i\}_{i=n+1}^{N},$ $$\sum\limits_{i=n+1}^{N}\mu_i (1 \otimes e_i) \perp \left(\mathcal{M} \cap (\mathbb{C}\otimes \mathbb{C}^N)\right).$$
It follows that $\sum\limits_{i=n+1}^{N}\mu_i (1 \otimes e_i)=0,$ and hence $\mu_i=0.$
Similarly, assume that $\mu_i \in \mathbb{C}$ such that $$\sum\limits_{i=n+1}^{N}\mu_i P_{\mathcal{M}^\bot}(E_0 \otimes I)P_{\mathcal{M}^\bot}(1\otimes e_i)=0.$$
Then $$\sum\limits_{i=n+1}^{N}\mu_i \left(E_0 \otimes I\right) P_{ \mathcal{M}^{\perp}} (1\otimes e_i)\in \mathcal{M}.$$ Obviously, $\operatorname{ran}\,(E_0 \otimes I)\subseteq \mathbb{C}\otimes \mathbb{C}^N,$
and hence $$ \left(E_0 \otimes I\right) P_{ \mathcal{M}^{\perp}}  \sum\limits_{i=n+1}^{N}\mu_i(1\otimes e_i)\in \mathcal{M}\cap (\mathbb{C}\otimes \mathbb{C}^N).$$
Therefore, by the choice of $\{1 \otimes e_i\}_{i=n+1}^{N}$ again, 
$$
\begin{aligned}
0&= \left\langle\left(E_0 \otimes I\right) P_{\mathcal{M}^{\perp}}  \sum\limits_{i=n+1}^{N}\mu_i(1\otimes e_i), \sum\limits_{i=n+1}^{N}\mu_i(1\otimes e_i) \right\rangle\\&=
\left\langle P_{ \mathcal{M}^{\perp}}  \sum\limits_{i=n+1}^{N}\mu_i(1\otimes e_i), \sum\limits_{i=n+1}^{N}\mu_i(1\otimes e_i) \right\rangle
\\&=
\left\langle P_{ \mathcal{M}^{\perp}}  \sum\limits_{i=n+1}^{N}\mu_i(1\otimes e_i),  P_{ \mathcal{M}^{\perp}} \sum\limits_{i=n+1}^{N}\mu_i(1\otimes e_i) \right\rangle
\\&=
\left\|P_{ \mathcal{M}^{\perp}}  \sum\limits_{i=n+1}^{N}\mu_i(1\otimes e_i)\right\|^2,
\end{aligned}
$$
and hence $\mu_i=0.$ The claim is proved. Then
$$
\begin{aligned}
\operatorname{dim} &\operatorname{span} \,\{P_{\mathcal{M}^\bot}(E_0 \otimes I)P_{\mathcal{M}^\bot}(1\otimes e_i)\}_{i=n+1}^{N}\\&=\operatorname{dim}\operatorname{span}\,\{P_{\mathcal{M}^\bot}(1\otimes e_{n+1}),\cdots, P_{\mathcal{M}^\bot}(1\otimes e_{N})\}\\&=N-n.
\end{aligned}
$$
Obviously,
$$
\begin{aligned}
\operatorname{span} \,\{P_{\mathcal{M}^\bot}(E_0 \otimes I)P_{\mathcal{M}^\bot}(1\otimes e_i)\}_{i=n+1}^{N}&\subseteq \{P_{ \mathcal{M}^{\perp}} \left(E_0 \otimes I\right) f:f \in \mathcal{M}^\perp \}\\&
\subseteq \operatorname{span}\,\{P_{\mathcal{M}^\bot}(1\otimes e_{n+1}),\cdots, P_{\mathcal{M}^\bot}(1\otimes e_{N})\}.
\end{aligned}
$$
Therefore
$$
\begin{aligned}
\operatorname{ran}\,\Delta_{\mathcal{M}^\perp}&=\operatorname{ran}\,\Delta_{\mathcal{M}^\perp}^2\\&=\{P_{ \mathcal{M}^{\perp}} \left(E_0 \otimes I\right) f:f \in \mathcal{M}^\perp \}
\\&=\operatorname{span}\,\{P_{\mathcal{M}^\bot}(1\otimes e_{n+1}),\cdots, P_{\mathcal{M}^\bot}(1\otimes e_{N})\}.
\end{aligned}
$$
The proof is completed.
%
%
%
\end{proof}
In particular,
\begin{equation}\label{(2.1)}
   \mathbb{M}_{\mathcal{M}^\perp}
=\operatorname{span}\left\{f \cdot P_{\mathcal{M}^\perp}(1 \otimes e_i) : f \in  \mathcal{P}_d,  n+1\leq i \leq N\right\}.
\end{equation}

\begin{lem}\label{KNf}
Let $\mathcal{M}$ be a
submodule of $H_d^2 \otimes \mathbb{C}^N.$ Then
$$K(\mathcal{M}^\bot)=N -fd(\mathcal{M}).$$
\end{lem}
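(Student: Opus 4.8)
The plan is to show that the function $\xi\mapsto\lim_{r\to1^-}(1-r^2)\operatorname{trace}F(r\xi)$ is almost everywhere constant on $\partial\mathbb{B}_d$ with value $N-fd(\mathcal{M})$; since $\sigma_d$ is a probability measure, integrating then gives $K(\mathcal{M}^\bot)=N-fd(\mathcal{M})$. Writing $S_\xi=\sum_{i=1}^d\bar\xi_i S_{z_i}$ for $\xi\in\partial\mathbb{B}_d$, so that $S(r\xi)=rS_\xi$, I first observe that
$$
\operatorname{trace}F(r\xi)=\operatorname{trace}\big((I-rS_\xi)^{-1}\Delta_{\mathcal{M}^\bot}\big)^*\big((I-rS_\xi)^{-1}\Delta_{\mathcal{M}^\bot}\big)=\big\|(I-rS_\xi)^{-1}\Delta_{\mathcal{M}^\bot}\big\|_{\mathrm{HS}}^2 .
$$
By Lemma \ref{dimdefect}, $\Delta_{\mathcal{M}^\bot}$ has finite rank $N-n$, where $n=\dim\mathcal{M}\cap(\mathbb{C}\otimes\mathbb{C}^N)$, so this Hilbert--Schmidt norm is a finite sum over an orthonormal basis $\{v_j\}$ of $\operatorname{ran}\Delta_{\mathcal{M}^\bot}$, and $K_0(\xi):=\lim_{r\to1^-}(1-r^2)\operatorname{trace}F(r\xi)$ is the object to identify.

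Next I reduce this radial limit to a limit of iterates. Since $\mathcal{M}^\bot$ is coinvariant, $S_{z_i}^*=(M_{z_i}\otimes I_N)^*|_{\mathcal{M}^\bot}$, and therefore $S_\xi^k=P_{\mathcal{M}^\bot}(M_{\langle z,\xi\rangle^k}\otimes I_N)|_{\mathcal{M}^\bot}$ for every $k$. An Abelian-type argument of the kind used by Arveson (the diagonal coefficients $\|S_\xi^k g_j\|^2$ converge and the off-diagonal contributions in the resolvent expansion are controlled because multiplication by $\langle z,\xi\rangle$ shifts degree) then yields
$$
K_0(\xi)=\lim_{k\to\infty}\big\|S_\xi^k\Delta_{\mathcal{M}^\bot}\big\|_{\mathrm{HS}}^2=\lim_{k\to\infty}\sum_j\big\|P_{\mathcal{M}^\bot}\big(\langle z,\xi\rangle^k\,g_j\big)\big\|^2,\qquad g_j=\Delta_{\mathcal{M}^\bot}v_j .
$$
As a sanity check, in the free case $\mathcal{M}=0$ each summand equals $\|\langle z,\xi\rangle^k\|_{H_d^2}^2=|\xi|^{2k}=1$ for all $k$, recovering $K_0(\xi)=N=N-fd(0)$.

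The heart of the proof is to evaluate this last limit. As $k\to\infty$ the normalized functions $\langle z,\xi\rangle^k/\|\langle z,\xi\rangle^k\|$ concentrate at the boundary point $\xi$, so that multiplication by $\langle z,\xi\rangle^k$ followed by projection onto $\mathcal{M}$ retains, in the limit, exactly the component of the boundary value $g_j(\xi)\in\mathbb{C}^N$ lying in the fiber $E_\xi\mathcal{M}$. I would make this precise to prove
$$
\lim_{k\to\infty}\sum_j\big\|P_{\mathcal{M}^\bot}\big(\langle z,\xi\rangle^k g_j\big)\big\|^2=N-\dim E_\xi\mathcal{M}\quad\text{for a.e. }\xi\in\partial\mathbb{B}_d,
$$
using that the boundary values of $\operatorname{ran}\Delta_{\mathcal{M}^\bot}$ account for the full complement of $E_\xi\mathcal{M}$ in $\mathbb{C}^N$ for generic $\xi$. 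Finally, invoking the fiber-dimension machinery of the cited works (the complement of $\operatorname{mp}(\mathcal{M})$ is a proper analytic subvariety, and the generic fiber dimension $fd(\mathcal{M})$ persists to the boundary almost everywhere) gives $\dim E_\xi\mathcal{M}=fd(\mathcal{M})$ for a.e. $\xi$, whence $K_0(\xi)=N-fd(\mathcal{M})$ a.e.\ and $K(\mathcal{M}^\bot)=N-fd(\mathcal{M})$.

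The main obstacle is precisely this last identification. Controlling $\|P_{\mathcal{M}}(\langle z,\xi\rangle^k g_j)\|^2$ uniformly in $k$ and matching its limit to the boundary fiber $E_\xi\mathcal{M}$ requires both the almost-everywhere existence of boundary fibers of the generic dimension $fd(\mathcal{M})$ and a legitimate interchange of the limit $k\to\infty$ with the passage to the boundary; this is where the reproducing-kernel estimates for $H_d^2$ and the theory of fiber dimension must be combined with care, and it is the step I expect to absorb most of the work.
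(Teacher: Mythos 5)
Your proposal is a plan rather than a proof: the two displayed limits that carry all the content are asserted, not established, and both hide genuine obstructions. First, the ``Abelian-type'' reduction of $\lim_{r\to1^-}(1-r^2)\operatorname{trace}F(r\xi)$ to $\lim_k\|S_\xi^k\Delta_{\mathcal{M}^\perp}\|_{\mathrm{HS}}^2$ is unjustified as stated: expanding $(I-rS_\xi)^{-1}\Delta_{\mathcal{M}^\perp}=\sum_k r^kS_\xi^k\Delta_{\mathcal{M}^\perp}$ produces cross terms $\operatorname{trace}\bigl(\Delta_{\mathcal{M}^\perp}S_\xi^{*l}S_\xi^k\Delta_{\mathcal{M}^\perp}\bigr)$, $k\neq l$, and your reason for discarding them (``multiplication by $\langle z,\xi\rangle$ shifts degree'') fails for non-graded $\mathcal{M}$, because $S_\xi^k=P_{\mathcal{M}^\perp}M_{\langle z,\xi\rangle^k}|_{\mathcal{M}^\perp}$ involves the projection $P_{\mathcal{M}^\perp}$, which mixes degrees; already for $\mathcal{M}=(z-\tfrac12)H^2$ in $d=1$ the cross terms are nonzero. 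Second, and more seriously, the objects in your key identity are not well defined: for $d\geq2$ a general function of $H_d^2$ (in particular $g_j=\Delta_{\mathcal{M}^\perp}v_j$) has no almost-everywhere boundary values on $\partial\mathbb{B}_d$, so ``the component of $g_j(\xi)$ lying in the fiber $E_\xi\mathcal{M}$'' has no meaning, and likewise the ``boundary fiber'' $E_\xi\mathcal{M}$ itself. The statement that the interior fiber dimension ``persists to the boundary almost everywhere'' is precisely the main structure theorem of Greene--Richter--Sundberg (inner multipliers have partial-isometric $K$-limits a.e.\ of maximal rank); invoking ``the fiber-dimension machinery of the cited works'' here is assuming exactly what must be proved, since that machinery applies to the inner multiplier $\Phi$ in the factorization $(1-|z|^2)F(z)=I-\Phi(z)\Phi(z)^*$, not to arbitrary elements of $\mathcal{M}^\perp$.

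For contrast, the paper never touches the boundary: it quotes \cite[Theorem 5.2]{inner multipliers}, which already packages all of this analysis into the interior formula $K(\mathcal{M}^\perp)=\operatorname{dim}\operatorname{ran}\Delta_{\mathcal{M}^\perp}-\sup\{\dim E_\lambda\ker L:\lambda\in\mathbb{B}_d\}$, where $L(p\otimes\xi)=p\cdot\Delta_{\mathcal{M}^\perp}\xi$. The remaining work is linear algebra: with $n=\dim\mathcal{M}\cap(\mathbb{C}\otimes\mathbb{C}^N)$, Lemma \ref{dimdefect} gives $\operatorname{rank}\Delta_{\mathcal{M}^\perp}=N-n$, an explicit change of variables identifies $\ker L$ with $\mathcal{M}'=\mathcal{M}\cap(H_d^2\otimes\mathbb{C}^{N-n})$ fiberwise, and the splitting $\mathcal{M}=\mathcal{M}'\oplus(H_d^2\otimes\operatorname{span}\{e_1,\dots,e_n\})$ yields $fd(\mathcal{M})=fd(\mathcal{M}')+n$, whence $K(\mathcal{M}^\perp)=N-fd(\mathcal{M})$. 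If you want to salvage your route, you would in effect be reproving the Greene--Richter--Sundberg theorem; the economical fix is to cite it, as the paper does, and supply only the algebraic reduction.
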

\begin{proof}
By \cite[Theorem 5.2]{inner multipliers},
\begin{eqnarray}\label{eq:K}
K(\mathcal{M}^\perp)=\operatorname{dim}\operatorname{ran}\,\Delta_{\mathcal{M}^\perp}-\sup \{\operatorname{dim} E_{\lambda} \operatorname{ker}L: \lambda \in \mathbb{B}_{d}\},
    \end{eqnarray}
    where $L:H_d^2 \otimes\operatorname{ran}\,\Delta_{\mathcal{M}^\perp} \rightarrow \mathcal{M}^\perp$ is determined by $L(p\otimes \xi)=p\cdot \Delta_{\mathcal{M}^\perp} \xi$ for $p \in \mathcal{P}_d,~\xi \in \operatorname{ran}\,\Delta_{\mathcal{M}^\perp},$ and for $\lambda \in \mathbb{B}_d,$ $$E_{\lambda} \operatorname{ker}L=\{f(\lambda):f\in \operatorname{ker}L\}.$$
Assume that
\begin{equation}\label{(2.3)}
   \text{dim}\, \mathcal{M}\cap (\mathbb{C} \otimes \mathbb{C}^N)=n.
\end{equation}
 By Lemma \ref{dimdefect}, there exists an orthonormal basis $\{e_i\}_{i=1}^{N}$ of $\mathbb{C}^N$ such that
$\{P_{\mathcal{M}^\bot}(1\otimes e_i)\}_{i=n+1}^{N}$ and $\{1\otimes e_i\}_{i=1}^{n}$ are linear bases of $\operatorname{ran}\,\Delta_{\mathcal{M}^\perp}^2$ and $\mathcal{M}\cap (\mathbb{C} \otimes \mathbb{C}^N)$ respectively.
Take $f_i\in \mathcal{M}^\perp$ such that $\Delta_{\mathcal{M}^\perp}^2 f_i =P_{\mathcal{M}^\bot}(1\otimes e_i),$ $n+1 \leq i \leq N.$
Obviously, $\{\Delta_{\mathcal{M}^\perp} f_i\}_{i=n+1}^{N}$ is linearly independent.

Let $A:\operatorname{ran}\,\Delta_{\mathcal{M}^\perp} \rightarrow \mathbb{C}^{N-n}$ be a linear mapping determined by
$A(\Delta_{\mathcal{M}^\perp} f_i)=e_i,\, n+1 \leq i \leq N$ and set $B=I\otimes A: H_d^2  \otimes \operatorname{ran}\,\Delta_{\mathcal{M}^\perp}  \rightarrow H_d^2 \otimes \mathbb{C}^{N-n}.$
For any $ g_i \in H_d^2,$ there are sequences $\{p_m^i\}$ of polynomials such that $\lim\limits_{m\rightarrow \infty}||p_m^i-g_i||=0,$ and hence
$$\lim\limits_{m\rightarrow \infty}\left\|\sum\limits_{i=n+1}^{N} p_m^i \otimes e_i-\sum\limits_{i=n+1}^{N} g_i \otimes e_i\right\|=0.$$
Write $g=\sum\limits_{i=n+1}^{N} g_i \otimes e_i.$
Therefore
$$
\begin{aligned}
L(B^{-1}g)&=\lim\limits_{m\rightarrow \infty}L\left(\sum\limits_{i=n+1}^{N}  p_m^i \otimes \Delta_{\mathcal{M}^\perp}(f_i)\right)\\&=\lim\limits_{m\rightarrow \infty}\sum\limits_{i=n+1}^{N} p_m^i \cdot \Delta_{\mathcal{M}^\perp}^2(f_i)
\\&=\lim\limits_{m\rightarrow \infty}\sum\limits_{i=n+1}^{N} p_m^i \cdot P_{\mathcal{M}^\perp} (1 \otimes e_i)\\&= \lim\limits_{m\rightarrow \infty}\sum\limits_{i=n+1}^{N}  P_{\mathcal{M}^\perp}
( p_m^i \otimes e_i)\\&= P_{\mathcal{M}^\perp} g.
\end{aligned}
$$
Considering $\mathbb{C}^{N-n}$ as a subspace of $\mathbb{C}^{N},$
set $\mathcal{M}^{\prime}=\mathcal{M}\cap (H_d^2 \otimes \mathbb{C}^{N-n}).$
it is easy to see that $\text{ker} (L)=B^{-1} \mathcal{M}^{\prime}.$
 Notice that $\text{dim} E_\lambda \mathcal{M}^{\prime}= \text{dim} E_\lambda B^{-1}\mathcal{M}^{\prime},$ therefore
 \begin{eqnarray}\label{eq:jiu1}
  \text{dim}E_\lambda \text{ker} (L)=\text{dim} E_\lambda B^{-1}\mathcal{M}^{\prime}=\text{dim} E_\lambda \mathcal{M}^{\prime}.
   \end{eqnarray}
  From \eqref{(2.3)},
we have
$$\mathcal{M}=\mathcal{M}^{\prime}\oplus (H_d^2 \otimes \text{span}\{e_{1},\cdots,e_n\}).$$ Then
it is not difficult to see that $fd(\mathcal{M})=fd(\mathcal{M}^{\prime})+n.$ Hence
by \eqref{eq:K} and \eqref{eq:jiu1}, $$K(\mathcal{M}^\perp)=N-n-\sup \{\operatorname{dim} E_{\lambda} \mathcal{M}^{\prime}: \lambda \in \mathbb{B}_{d}\}=N-fd(\mathcal{M}).$$
\end{proof}
To distinguish between the linear independence of a subset in a module and the linear independence of a subset in a vector space, we provide the following notation.
\begin{Notation}
Let $\mathcal{A}$ be a module over an integral domain $\mathfrak{R},$ a finite subset $\{a_i\}_{i=1}^m\subseteq \mathcal{A}$ is called to be linearly independent \it{in the module $\mathcal{A},$} if
$$\sum\limits_{i=1}^mp_ia_i=0, \quad p_i \in \mathfrak{R}$$ implies that $p_i=0.$ A subset $\mathfrak{S}$ is a basis of the module $\mathcal{A}$ if $\mathfrak{S}$ is linearly independent in the module $\mathcal{A},$ and generates $\mathcal{A}.$
\end{Notation}
The following easy lemma should be well-known before, and we omit the proof.
\begin{lem}\label{lemF}
For a finitely generated module $F$ over $\mathcal{P}_d$, let $\mathfrak{S}$ be a generating set of $F$. Then the number of elements in any maximal linearly independent set of $\mathfrak{S}$
in the module $F$
is $\text{rank} \,(F)$, regardless of the choice of $\mathfrak{S}$.
\end{lem}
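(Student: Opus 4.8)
The plan is to pass to the field of fractions $K = S^{-1}\mathcal{P}_d$ (which exists since $\mathcal{P}_d$ is an integral domain) and to translate module-linear independence into ordinary $K$-linear independence in the vector space $S^{-1}F$, recalling that by definition $\operatorname{rank}(F) = \dim_K(S^{-1}F)$. The whole lemma will then reduce to the standard linear-algebra fact that a maximal linearly independent subset of a spanning set of a vector space is a basis.

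First I would record the key equivalence: for a finite subset $\{a_i\}_{i=1}^m \subseteq F$, it is linearly independent in the module $F$ if and only if the images $\{a_i/1\}_{i=1}^m$ are linearly independent over $K$ in $S^{-1}F$. The reverse implication is immediate from the injectivity of the map $\mathcal{P}_d \hookrightarrow K$. For the forward implication, starting from a $K$-dependence $\sum_i (p_i/s_i)(a_i/1) = 0$, I would clear denominators to obtain $q_i \in \mathcal{P}_d$ with $\big(\sum_i q_i a_i\big)/1 = 0$ in $S^{-1}F$; by the definition of localization there is $t \in S$ with $\sum_i (t q_i) a_i = 0$ in $F$, and then module-independence together with the integral-domain property forces each $q_i = 0$, hence each $p_i/s_i = 0$.

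Next, let $T \subseteq \mathfrak{S}$ be any maximal subset that is linearly independent in the module $F$, and consider its image $\bar{T} = \{a/1 : a \in T\}$ in $S^{-1}F$. By the equivalence just established, $\bar{T}$ is $K$-linearly independent, and its elements are pairwise distinct. To see that $\bar{T}$ spans $S^{-1}F$, I would invoke maximality of $T$: for any $a \in \mathfrak{S} \setminus T$ the set $T \cup \{a\}$ is linearly dependent in the module, yielding a relation $p_a \cdot a + \sum_{b \in T} p_b \cdot b = 0$ with not all coefficients zero; the coefficient $p_a$ must be nonzero, since otherwise $T$ itself would be dependent, and therefore $a/1 \in \operatorname{span}_K \bar{T}$. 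As $\mathfrak{S}$ generates $F$, its image spans $S^{-1}F$ over $K$, whence $\bar{T}$ spans $S^{-1}F$. Thus $\bar{T}$ is a $K$-basis of $S^{-1}F$, and $|T| = |\bar{T}| = \dim_K(S^{-1}F) = \operatorname{rank}(F)$, a value independent of both the choice of $\mathfrak{S}$ and the choice of $T$.

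The argument is essentially routine once localization is brought in; the only point requiring genuine care is the forward direction of the equivalence, where one must account for possible torsion in $F$ — an element can satisfy $a/1 = 0$ in $S^{-1}F$ without vanishing in $F$ — which is exactly why the definition of localization must be used to supply the auxiliary element $t \in S$. I expect this torsion bookkeeping to be the main (though mild) obstacle.
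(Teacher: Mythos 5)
Your proof is correct. The paper in fact omits the proof of this lemma entirely (it is declared ``well-known''), and your localization argument is precisely the standard one dictated by the paper's definition of $\operatorname{rank}(F)$ as $\dim_{S^{-1}\mathcal{P}_d}(S^{-1}F)$: the two-way equivalence between independence in the module $F$ and $K$-linear independence in $S^{-1}F$ (with the torsion issue correctly resolved by the auxiliary element $t \in S$), together with the maximality argument showing that the image of $T$ spans $S^{-1}F$, gives a complete and gap-free justification.
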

With the above preparations, we can prove our main result. At first, we restate Theorem \ref{thm1.1} as follows.
\begin{thm}\label{thm2.611}
If $\mathcal{M}\subseteq H_d^2 \otimes \mathbb{C}^N$ is a submodule,
then the
  following statements are equivalent.
\begin{itemize}
     \item[(1)] $K(\mathcal{M}^\bot)=\chi(\mathcal{M}^\bot)$;
     \item[(2)]
  there exists a nonempty open set $U\subseteq \operatorname{mp}(\mathcal{M})$ and polynomials $\{p_i\}_{i=1}^\mathfrak{m} \subseteq \mathcal{M}$ such that
  $$E_{\lambda}\mathcal{M}=\operatorname{span}\{p_i(\lambda):1\leq i\leq \mathfrak{m}\}$$ for all $\lambda \in U;$
     \item[(3)]$\mathcal{M}$ is locally algebraic.
\end{itemize}
\end{thm}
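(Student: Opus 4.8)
The plan is to convert the analytic equality $K(\mathcal{M}^\perp)=\chi(\mathcal{M}^\perp)$ into a purely algebraic comparison of two ranks attached to $\mathcal{M}$, and then to read off local algebraicity by a semicontinuity argument. Write $\mathcal{M}_0=\mathcal{M}\cap(\mathcal{P}_d\otimes\mathbb{C}^N)$ for the submodule of polynomial elements of $\mathcal{M}$. By Lemma \ref{KNf} we already have $K(\mathcal{M}^\perp)=N-fd(\mathcal{M})$, while by \eqref{oulashu} we have $\chi(\mathcal{M}^\perp)=\operatorname{rank}(\mathbb{M}_{\mathcal{M}^\perp})$. The first, and main, step is to establish the identity
$$\chi(\mathcal{M}^\perp)=N-\operatorname{rank}(\mathcal{M}_0).$$
To prove it I would consider the $\mathcal{P}_d$-module surjection $\Psi:\mathcal{P}_d^N\to\mathbb{M}_{\mathcal{M}^\perp}$ determined by $\Psi(p_1,\dots,p_N)=\sum_i p_i\cdot P_{\mathcal{M}^\perp}(1\otimes e_i)$, whose surjectivity follows from Lemma \ref{dimdefect} and \eqref{(2.1)} (the vectors $P_{\mathcal{M}^\perp}(1\otimes e_i)$ span $\operatorname{ran}\Delta_{\mathcal{M}^\perp}$). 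The crucial point is the cancellation $p\cdot P_{\mathcal{M}^\perp}(1\otimes e_i)=P_{\mathcal{M}^\perp}(p\otimes e_i)$: indeed $p\cdot y=P_{\mathcal{M}^\perp}(p\,y)$ for $y\in\mathcal{M}^\perp$, and since $\mathcal{M}$ is a submodule the correction term $P_{\mathcal{M}^\perp}(p\cdot P_{\mathcal{M}}(1\otimes e_i))$ vanishes. Consequently $\Psi(p_1,\dots,p_N)=P_{\mathcal{M}^\perp}(\sum_i p_i\otimes e_i)=0$ exactly when $\sum_i p_i\otimes e_i\in\mathcal{M}$, that is, when it lies in $\mathcal{M}_0$; thus $\ker\Psi\cong\mathcal{M}_0$ and $\mathbb{M}_{\mathcal{M}^\perp}\cong\mathcal{P}_d^N/\mathcal{M}_0$. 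Since rank is additive on the resulting short exact sequence, the displayed identity follows.

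Next I would invoke the standard fact that for a finitely generated submodule $F\subseteq\mathcal{P}_d\otimes\mathbb{C}^N$ the rank equals the generic fiber dimension, $\operatorname{rank}(F)=fd(F)=\max_{\lambda}\dim E_\lambda F$ (here $E_\lambda$ is a genuine module map for ordinary multiplication, so this is simply the generic rank of a polynomial matrix of generators). Applying this to $F=\mathcal{M}_0$ and combining with the two formulas above, condition $(1)$ becomes $fd(\mathcal{M}_0)=fd(\mathcal{M})$. Since $\mathcal{M}_0\subseteq\mathcal{M}$ forces $fd(\mathcal{M}_0)\le fd(\mathcal{M})$ always (which incidentally reproves the general inequality $\chi\ge K$), the content of $(1)$ is precisely that the polynomial part of $\mathcal{M}$ already attains the full fiber dimension.

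Finally I would close the cycle $(1)\Rightarrow(2)\Rightarrow(3)\Rightarrow(1)$. For $(1)\Rightarrow(2)$, fix generators $p_1,\dots,p_k$ of $\mathcal{M}_0$ and let $U$ be the open set where the matrix $[p_1(\lambda),\dots,p_k(\lambda)]$ has its maximal (generic) rank $fd(\mathcal{M}_0)=fd(\mathcal{M})$; on $U$ the inclusion $\operatorname{span}\{p_i(\lambda)\}\subseteq E_\lambda\mathcal{M}$ together with $fd(\mathcal{M})\le\dim E_\lambda\mathcal{M}\le fd(\mathcal{M})$ forces $U\subseteq\operatorname{mp}(\mathcal{M})$ and $E_\lambda\mathcal{M}=\operatorname{span}\{p_i(\lambda)\}$. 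The implication $(2)\Rightarrow(3)$ is immediate by evaluating at any point of $U$. For $(3)\Rightarrow(1)$, local algebraicity gives $\lambda_0\in\operatorname{mp}(\mathcal{M})$ and polynomials $\{p_i\}\subseteq\mathcal{M}_0$ with $E_{\lambda_0}\mathcal{M}=\operatorname{span}\{p_i(\lambda_0)\}$, whence $fd(\mathcal{M})=\dim E_{\lambda_0}\mathcal{M}=\dim E_{\lambda_0}\mathcal{M}_0\le fd(\mathcal{M}_0)$, giving $fd(\mathcal{M}_0)=fd(\mathcal{M})$ and hence $(1)$. The step I expect to be the main obstacle is the algebraic identification $\mathbb{M}_{\mathcal{M}^\perp}\cong\mathcal{P}_d^N/\mathcal{M}_0$, and in particular the verification that the module of relations among the generators $P_{\mathcal{M}^\perp}(1\otimes e_i)$ is exactly $\mathcal{M}_0$; this hinges on the compression-versus-multiplication cancellation, which is where the hypothesis that $\mathcal{M}$ is a submodule (not merely a subspace) is used. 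Getting this bookkeeping right, together with applying the rank-equals-generic-fiber-dimension fact in the precise form needed, is the crux, after which both the inequality $\chi\ge K$ and the characterization via local algebraicity fall out cleanly.
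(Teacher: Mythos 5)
Your proposal is correct, and it reaches the theorem by a genuinely different route through the middle of the argument. You share the paper's two endpoints, namely $K(\mathcal{M}^\perp)=N-fd(\mathcal{M})$ from Lemma \ref{KNf} and $\chi(\mathcal{M}^\perp)=\operatorname{rank}(\mathbb{M}_{\mathcal{M}^\perp})$ from \eqref{oulashu}, but the paper then works directly with the generating set $\{P_{\mathcal{M}^\bot}(1\otimes e_i)\}_{i=1}^N$ via Lemma \ref{lemF}: for $(3)\Rightarrow(1)$ it cites Arveson's inequality $K\leq\chi$ and proves $\operatorname{rank}(\mathbb{M}_{\mathcal{M}^\bot})\leq N-l$ by exhibiting, through explicit cofactor expansions of the matrices $C^{(k)}$ (split into two cases according to whether the cofactors $r_{i1}$ vanish), a polynomial relation among any $N-l+1$ of the generators; for $(1)\Rightarrow(2)$ it extracts the relations $Q_s$ from a maximal linearly independent subset. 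You instead prove the structural identification $\mathbb{M}_{\mathcal{M}^\perp}\cong\mathcal{P}_d^N/\mathcal{M}_0$, where $\mathcal{M}_0=\mathcal{M}\cap(\mathcal{P}_d\otimes\mathbb{C}^N)$, using the intertwining $p(S_{z_1},\dots,S_{z_d})P_{\mathcal{M}^\perp}=P_{\mathcal{M}^\perp}\,p(M_{z_1}\otimes I_N,\dots,M_{z_d}\otimes I_N)$ (valid exactly because $\mathcal{M}$ is invariant, which is the correct crux), and then exactness of localization gives $\chi(\mathcal{M}^\perp)=N-\operatorname{rank}(\mathcal{M}_0)=N-fd(\mathcal{M}_0)$. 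This reduces the whole theorem to the transparent statement that $(1)$ holds iff $fd(\mathcal{M}_0)=fd(\mathcal{M})$, after which all three implications are a few lines each, and it reproves rather than cites the inequality $K\leq\chi$ as the trivial inclusion $fd(\mathcal{M}_0)\leq fd(\mathcal{M})$. What your approach buys is conceptual economy: the determinant manipulations disappear, being absorbed into the standard fact that the rank of a finitely generated submodule of $\mathcal{P}_d^N$ equals the generic rank of a matrix of generators; you should state that fact with the same care the paper gives Lemma \ref{lemF}, including that $\mathcal{M}_0$ is finitely generated (Noetherianity of $\mathcal{P}_d$) and that the generic rank is attained on $\mathbb{B}_d$ because a nonzero polynomial cannot vanish on a nonempty open set. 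What the paper's hands-on route buys is independence from the exact-sequence and localization formalism, and explicit relations that transfer essentially verbatim to the infinitely-many-variables setting of Theorem \ref{thm1.5}, where the same template is reused with $\mathcal{P}_m$ in place of $\mathcal{P}_d$; your argument adapts there as well, but only after fixing $m$ large enough that the finitely many relevant polynomials lie in $\mathcal{P}_m$.
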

\begin{proof}(2)$\Rightarrow$(3): Obviously.

(3)$\Rightarrow$(1): 
For simplicity, set $l=fd(M).$ By the local algebraicity of $\mathcal{M},$ we can take $\lambda_{0}\in \operatorname{mp}(\mathcal{M})$ and polynomials $\{p_i\}_{i=1}^l \subseteq \mathcal{M}$ such that
 $$E_{\lambda_{0}}\mathcal{M}=\operatorname{span}\{p_i(\lambda_{0}):1\leq i\leq l\}.$$
Write $p_i=(p_{i,1},\cdots,p_{i,N}).$
Then
 \begin{eqnarray}\label{eq:2}
\text{rank} \left[
                 \begin{array}{cccc}
                   p_{1,1}(\lambda_{0}) & p_{1,2}(\lambda_{0}) & \cdots & p_{1,N}(\lambda_{0}) \\
                   p_{2,1}(\lambda_{0}) &p_{2,2}(\lambda_{0}) & \cdots &p_{2,N}(\lambda_{0}) \\
                   \vdots & \vdots & \ddots & \vdots \\
                    p_{l,1}(\lambda_{0}) & p_{l,2}(\lambda_{0}) & \cdots & p_{l,N}(\lambda_{0})\\
                 \end{array}
               \right]=l,
\end{eqnarray}
and hence there exist $1\leq i_1<\cdots<i_l\leq N$ such that the determinant
  \begin{eqnarray}\label{eq:3}
 \left|
                 \begin{array}{cccc}
                   p_{1,i_1}(\lambda_{0}) & p_{1,i_2}(\lambda_{0}) & \cdots & p_{1,i_l}(\lambda_{0}) \\
                   p_{2,i_1}(\lambda_{0}) &p_{2,i_2}(\lambda_{0}) & \cdots &p_{2,i_l}(\lambda_{0}) \\
                   \vdots & \vdots & \ddots & \vdots \\
                    p_{l,i_1}(\lambda_{0}) & p_{l,i_2}(\lambda_{0}) & \cdots & p_{l,i_l}(\lambda_{0})\\
                 \end{array}
               \right|\neq 0,
\end{eqnarray}
 which implies that the determinant
  \begin{eqnarray}\label{eq:4}
 \left|
                 \begin{array}{cccc}
                   p_{1,i_1} & p_{1,i_2} & \cdots & p_{1,i_l} \\
                   p_{2,i_1} &p_{2,i_2} & \cdots &p_{2,i_l} \\
                   \vdots & \vdots & \ddots & \vdots \\
                    p_{l,i_1} & p_{l,i_2} & \cdots & p_{l,i_l}\\
                 \end{array}
               \right|\neq 0.
\end{eqnarray}
By \cite[Page 216]{Arveson curvature}, Lemma \ref{KNf} and \eqref{oulashu}, $$K(\mathcal{M}^\bot)\leq \chi(\mathcal{M}^\bot), \quad K(\mathcal{M}^\bot)= N-fd(\mathcal{M})\quad \text{and}\quad \chi(\mathcal{M}^\bot)=\text{rank}(\mathbb{M}_{\mathcal{M}^\bot} ).$$ Therefore in order to obtain $K(\mathcal{M}^\bot)=\chi(\mathcal{M}^\bot),$
it suffices to show that
 \begin{eqnarray}\label{suffices}
 \text{rank}(\mathbb{M}_{\mathcal{M}^\bot} )\leq N-l.
\end{eqnarray}
By \eqref{(2.1)}, $\{P_{\mathcal{M}^\bot}(1\otimes e_{i})\}_{i=1}^{N}$ generates $\mathbb{M}_{\mathcal{M}^\bot} .$ By Lemma \ref{lemF},
it suffices to show that
for any $1\leq w_1<\cdots<w_{N-l+1}\leq N,$  $\{{P_{\mathcal{M}^\bot}(1\otimes e_{w_i})}\}_{i=1}^{N-l+1}$ is
  linearly dependent in the module $\mathbb{M}_{\mathcal{M}^\bot} .$ Indeed,
let
$$\{v_1,\cdots,v_{l-1}\}=\{1,\cdots,N\}\setminus \{w_1,\cdots,w_{N-l+1}\},$$ and
 \begin{eqnarray}\label{eq:6}
 C^{(k)}=\left[
                 \begin{array}{cccc}
                   p_{1,k} & p_{1,v_1} & \cdots & p_{1,v_{l-1}}\\
                   p_{2,k} &p_{2,v_1} & \cdots &p_{2,v_{l-1}} \\
                   \vdots & \vdots & \ddots & \vdots \\
                    p_{l,k} & p_{l,v_1} & \cdots & p_{l,v_{l-1}}\\
                 \end{array}
               \right], \quad 1\leq k\leq N.
\end{eqnarray}
Let $C^{(k)}_{ij}$ be the algebraic cofactor of of the $(i,j)$ element in $C^{(k)}.$
Notice that for all $1\leq i \leq l,$ $$C^{(1)}_{i1}=C^{(2)}_{i1}=\cdots=C^{(N)}_{i1},$$ and write
 $r_{i1}=C^{(k)}_{i1}.$ Next, we will prove that  $\{{P_{\mathcal{M}^\bot}(1\otimes e_{w_i})}\}_{i=1}^{N-l+1}$ is
  linearly dependent in the module $\mathbb{M}_{\mathcal{M}^\bot} $ in two cases.
  \begin{Case}
There exists an $i_0 \in \{1,\cdots,l\}$ such that $r_{i_01}=0.$
\end{Case}
There are polynomials $q_1, \cdots, q_{i_0-1}, q_{i_0+1}, \cdots, q_l,$ which are not all zero, such that
 \begin{eqnarray}\label{eq:10}
\left(\sum\limits_{i\neq i_0} q_i p_{i,v_1},\cdots,\sum\limits_{i\neq i_0} q_i p_{i,v_{l-1}}\right)=\sum\limits_{i\neq i_0} q_i \left(p_{i,v_1}, \ldots, p_{i,v_{l-1}}\right)=0.
\end{eqnarray}
By \eqref{eq:4}, $p_1, \cdots, p_{i_0-1}, p_{i_0+1}, \cdots, p_l$ are linearly independent in the module $\mathcal{P}_d^{N} $, which gives that
 \begin{eqnarray}\label{eq:11}
\sum\limits_{i\neq i_0} q_i p_i \neq 0.
\end{eqnarray}
Write $\left(f_1, \cdots,f_j,\cdots, f_N\right)=\sum\limits_{i\neq i_0} q_i p_i,$ then
\begin{equation}\label{eq:13}
\begin{aligned}
\sum\limits_{i\neq i_0} q_i p_i&=\sum\limits_{i\neq i_0} q_i(p_{i1},\cdots,p_{ij},\cdots,p_{iN})\\&=\left(\sum\limits_{i\neq i_0} q_ip_{i1},\cdots,\sum\limits_{i\neq i_0} q_ip_{ij},\cdots,\sum\limits_{i\neq i_0} q_ip_{iN}\right).
\end{aligned}
\end{equation}
 By \eqref{eq:10},
$
f_{v_i}=0 ~\text{for all} ~1 \leq i \leq l-1 ,
$
and
hence by \eqref{eq:11}, $(f_{w_1},\cdots,f_{w_{N-l+1}})\neq0.$
It follows from \eqref{eq:13} that
$$\sum\limits_{i=1}^{N-l+1} f_{w_i}\otimes e_{w_i}=\sum\limits_{i=1}^Nf_i\otimes e_i=\sum\limits_{i\neq i_0} q_i p_i\in \mathcal{M}.$$
 This ensures that
 $$\sum\limits_{i=1}^{N-l+1}f_{w_i}\cdot{P_{\mathcal{M}^\bot}(1\otimes e_{w_i})}=0.$$
 \begin{Case}
 For all $1 \leq i \leq l,$ $r_{i1}\neq 0.$
  \end{Case}
By \eqref{eq:4}, $\{p_i\}_{i=1}^{l}$ is linearly independent in the module $\mathcal{P}_d^{N}$,
hence
\begin{equation}\label{eq:8}
\begin{aligned}
\left (\left|C^{(1)}\right|,\cdots,\left|C^{(k)}\right|,\cdots,\left|C^{(N)}\right|\right)&=\left(\sum\limits_{i=1}^{l}r_{i1}p_{i1},\cdots,\sum\limits_{i=1}^{l}r_{i1}p_{ik},\cdots,\sum\limits_{i=1}^{l}r_{i1}p_{iN}\right)\\&=
\sum\limits_{i=1}^{l}r_{i1}\left(p_{i1},\cdots,p_{ik},\cdots,p_{iN}\right)\\&=\sum\limits_{i=1}^{l}r_{i1}p_i \neq 0.
\end{aligned}
\end{equation}
 By the definition of $C^{(k)},$
  \begin{eqnarray}\label{eq:9}
\left| C^{(v_1)}\right|=\cdots= \left|C^{(v_{l-1})}\right|=0.
\end{eqnarray}
Therefore $\left(\left|C^{(w_1)}\right|,\cdots,\left|C^{(w_{N-l+1})}\right|\right)\neq 0.$
 By \eqref{eq:8} and \eqref{eq:9},
$$\sum\limits_{i=1}^{N-l+1} \left|C^{(w_i)}\right|\otimes e_{w_i}=\sum\limits_{i=1}^N\left|C^{(i)}\right|\otimes e_i=\sum\limits_{i=1}^{l}r_{i1}p_i\in \mathcal{M}.$$
 This ensures that
 $$\sum\limits_{i=1}^{N-l+1}\left|C_{w_i}\right|\cdot{P_{\mathcal{M}^\bot}(1\otimes e_{w_i})}=0.$$

(1)$\Rightarrow$(2): Suppose that $K(\mathcal{M}^\bot)=\chi(\mathcal{M}^\bot).$ Set $l=fd(\mathcal{M}),$ by Lemma \ref{KNf},
$$\chi(\mathcal{M}^\bot)=N-l.$$ By \eqref{oulashu} and Lemma \ref{lemF}, each maximal linearly independent set of $\{{P_{\mathcal{M}^\bot}(1\otimes e_{i})}\}_{i=1}^{N}$
in the module $\mathbb{M}_{\mathcal{M}^\bot} $
has $N-l$ elements.
For any maximal linearly independent set $\{{P_{\mathcal{M}^\bot}(1\otimes e_{i_k})}\}_{k=1}^{N-l}$
in the module $\mathbb{M}_{\mathcal{M}^\bot},$
 write $$\{\alpha_1,\cdots,\alpha_{l}\}=\{1,\cdots,N\}\setminus \{i_1,\cdots,i_{N-l}\}.$$
Then for any fixed $1\leq s\leq l$, there are polynomials $p_{i,s}, ~1\leq i \leq N-l+1,$ which are not all zero,
such that $$\sum\limits_{j=1}^{N-l}p_{j,s} \otimes e_{i_{j}}+p_{N-l+1,s} \otimes e_{\alpha_s}\in \mathcal{M}.$$
Since $\{{P_{\mathcal{M}^\bot}(1\otimes e_{i_k})}\}_{k=1}^{N-l}$ is linearly independent
in the module $\mathbb{M}_{\mathcal{M}^\bot},$ we have $p_{N-l+1,s}$ $\neq 0$ for all $1\leq s\leq l.$
Let \begin{equation}\label{2.14}
U = \left(\bigcap_{s=1}^{l} Z(p_{N-l+1,s})^c\right) \bigcap \mathbb{B}_d,
\end{equation}
 and
 $$Q_s=\sum\limits_{j=1}^{N-l}p_{j,s} \otimes e_{i_{j}}+p_{N-l+1,s} \otimes e_{\alpha_s}\in \mathcal{M}, \quad 1\leq s\leq l.$$
We claim that $Q_1(\lambda),\cdots, Q_l(\lambda)$ are linearly independent for every $\lambda \in U$. Indeed, if
$$\sum\limits_{s=1}^{l} \mu_s Q_s(\lambda)=0,\quad \mu_s \in \mathbb{C},$$
then $$\begin{aligned}
\sum\limits_{j=1}^{N-l}& \left(\sum\limits_{s=1}^{l}\mu_sp_{j,s}(\lambda)\right) e_{i_{j}}+\sum\limits_{s=1}^{l}\mu_sp_{N-l+1,s}(\lambda) e_{\alpha_s}\\&=\sum\limits_{s=1}^{l}\mu_s\left(\sum\limits_{j=1}^{N-l}p_{j,s} \otimes e_{i_{j}}\right)(\lambda)+\sum\limits_{s=1}^{l}\mu_s(p_{N-l+1,s} \otimes e_{\alpha_s})(\lambda)\\&=\sum\limits_{s=1}^{l}\mu_sQ_s(\lambda)=0.
\end{aligned}$$
Hence
\begin{align}\label{ks}
\sum\limits_{s=1}^{l}\mu_sp_{N-l+1,s}(\lambda) e_{\alpha_s}=0.
\end{align}
It follows from \eqref{2.14} and (\ref{ks}) that $\mu_s=0$ for all $1\leq s\leq l,$ and the claim is proved.
Obviously, for every $\lambda \in U,$
$$fd(M)=l=\operatorname{dim}\operatorname{span}\{Q_i(\lambda):1\leq i\leq l\}.$$
Therefore $U\subseteq \text{mp}(\mathcal{M})$ and
$$E_{\lambda}\mathcal{M}=\operatorname{span}\{Q_i(\lambda):1\leq i\leq l\}, \quad \forall \lambda \in U.$$
\end{proof}

As mentioned in \eqref{oulashu}, $\chi(\mathcal{H}) = \operatorname{rank}(\mathbb{M}_{\mathcal{H}}).$ It can be seen that
$\operatorname{rank}(\mathbb{M}_{\mathcal{H}})$ plays a crucial role in proving Arveson's version of the Gauss-Bonnet-Chern formula as presented in the proof of Theorem \ref{thm2.611}.

For $N\geq 2,$ it is clear that there exist numerous locally algebraic submodules that are not polynomially generated. Consequently, by Theorem \ref{thm1.1}, we have that $K(\mathcal{M}^\bot) = \chi(\mathcal{M}^\bot).$
 To illustrate this point, we present the following example.
\begin{exam}
 Let $B(z_2)$ be an infinite Blaschke prodcut with zeros $\{\alpha_j\}.$
 Set $$\mathcal{M}=[z_1\otimes e_1,z_2\otimes e_2, B(z_2)\otimes e_1].$$
It can be verified that $\mathcal{M}$ is not polynomially generated while $\mathcal{M}$ is locally algebraic.
\end{exam}

\begin{rem}
In \cite[Page 214]{Arveson curvature}, Arveson established the following formula for calculating the Euler characteristic of a \( d \)-contractive Hilbert module \( \mathcal{H} \) of finite rank:
\begin{equation}\label{oula}
\chi(\mathcal{H}) = c(\mathbb{M}_{\mathcal{H}}) =d! \lim_{k \rightarrow \infty} \frac{\operatorname{dim} \mathbb{M}_k}{k^d},
\end{equation}
where
\[
\mathbb{M}_k = \operatorname{span}\{ f \cdot \zeta : f \in \mathcal{P}_d, \operatorname{deg} f \leq k, \zeta \in \operatorname{ran} \Delta_\mathcal{H} \}.
\]
Here, we provide a more fundamental approach to proving \eqref{oula} by the $\operatorname{rank}(\mathbb{M}_{\mathcal{H}})$. This discussion is presented in Proposition \ref{fulu1} in Appendix.
Moreover, for the quotient module $\mathcal{M}^\perp$
  of $H_d^2 \otimes \mathbb{C}^N$, we find that calculating the Euler characteristic by using $\operatorname{rank}(\mathbb{M}_{\mathcal{M}^\perp})$
 is more straightforward than applying the formula in \eqref{oula}. At the end of this section, we will compute the Euler characteristics of several examples from \cite[Proposition 7.14]{Arveson curvature} 
 {\color{red}by using this rank-based approach.}
 \end{rem}
 \begin{exam}

 Let $\phi$ be a homogeneous polynomial of degree at least 1 in $\mathcal{P}_d$, and define
$$
\mathcal{M}=\left\{(f, \phi \cdot f): f \in H_d^2\right\} \subseteq H_d^2 \otimes \mathbb{C}^2 .
$$
By Lemma \ref{dimdefect}, it is easy to see that $\{P_{\mathcal{M}^\perp}(1 \otimes e_i)\}_{i=1}^{2}$ is a linear basis of $\operatorname{ran}\,\Delta_{\mathcal{M}^\perp}.$
We claim that $\{P_{\mathcal{M}^\perp}(1 \otimes e_1)\}$ is a maximal linearly independent set of $\{P_{\mathcal{M}^\perp}(1 \otimes e_i)\}_{i=1}^{2}$
in the module $\mathbb{M}_{\mathcal{M}^\bot},$ which implies that  $\chi(\mathcal{M}^\perp)=1$ by \eqref{oulashu} and Lemma \ref{lemF}.
In fact, if $p\cdot P_{\mathcal{M}^\perp}(1 \otimes e_1) =0,$ then $p\otimes e_1\in \mathcal{M},$ hence by the definition of $\mathcal{M},$ we have $p=0.$  Moreover,
since $$1\cdot P_{\mathcal{M}^\perp}(1 \otimes e_1)+ \phi \cdot P_{\mathcal{M}^\perp}(1 \otimes e_2)=0,$$ then $\{P_{\mathcal{M}^\perp}(1 \otimes e_i)\}_{i=1}^{2}$ is linearly dependent
in the module $\mathbb{M}_{\mathcal{M}^\bot}.$
 \end{exam}

At the end of this section, we pose the following problem.
\begin{Problem}
  {\color{red}The Euler characteristic over the polynomial ring depends on the ring. Under what conditions does Arveson's version of the GBC formula for Hilbert modules over other rings hold? For example, for the Hilbert modules over the ring of multipliers?}
\end{Problem}

\section{Drury-Arveson space in infinitely many variables }

In this section, we will introduce some elementary properties of {\color{red}Drury-Arveson space} of infinitely many variables.
For a complex separable Hilbert space $\cal H$, let $\Gamma_s^n(\cal H)$ be the  symmetric $n$-fold tensor product of $\cal H$ with itself, and the symmetric Fock space
$$
\Gamma_s({\cal H})=\bigoplus\limits_{n=0}^\infty \Gamma_s^n(\cal H),
$$
where $\Gamma_0(\cal H)=\mathbb C$ is the vacuum space. In the historical literature \cite{Parthasarathy}, the symmetric Fock space describes the bosonic fields.

 By using the framework of Arveson \cite{Arveson 3}, for $m=\dim \cal H<\infty$, the symmetric Fock space $\Gamma_s(\cal H)$ is unitarily equivalent to $H_m^2$. For $\cal H$ being of infinite dimension, similar to the framework of Arveson\cite{Arveson 3}, $\Gamma_s(H)$ can be described as a Hilbert space of holomorphic functions on the unit ball $\mathbb B$ in $\ell^2$. Recall that a complex-valued function $g$ on an open subset $U$ of a Banach space $X$ is analytic on $U$ if $g$ is locally bounded, and for each $x_{0} \in U$ and any $x \in X$, the function $g\left(x_{0}+z x\right)$ depends analytically on the parameter $z$ for $x_{0}+z x \in U$.

It is easy to see that $H_m^2\subset H_{m+1}^2$ and for any $f\in H_m^2$,
$$
\|f\|_{H_m^2}=\|f\|_{H_{m+1}^2},
$$
which implies that $\bigcup\limits_{m=1}^\infty H_m^2$ can be equipped with a natural norm, and in what follows, we will denote its completion by $H^2$. Further,  any $f\in \bigcup\limits_{m=1}^\infty H_m^2$ can be seen as a holomorphic function on the unit ball $\mathbb B$ by
$$\hat{f}(z)=f(z_1,\cdots,z_m)\quad \text{for}~ f\in H_m^2, \quad z\in \mathbb B.$$
Next for any $f\in \bigcup\limits_{m=1}^\infty H_m^2$, assume $f\in H_{m_0}^2$ for $m_0>0$, and we have for any $\lambda\in\mathbb B$,
$$
|\hat{f}(\lambda)|=|f(\lambda_1,\cdots,\lambda_{m_0})| \leq \frac{\|f\|_{H^2_{m_0}}}{\sqrt{1-\sum\limits_{i=1}^{m_0}|\lambda_i|^2}} \leq \frac{\|f\|_{H^2}}{\sqrt{1-\|\lambda\|_{{\color{red}\ell^2}}^{2}}}.
$$
It follows that for any $\lambda\in\mathbb B$, the evaluation functional $E_{\lambda}$ on $\bigcup\limits_{m=1}^\infty H_m^2$, defined by $E_\lambda(f)=f(\lambda)$, is continuous, which is naturally extended to a continuous functional on  $H^2$. Therefore, as mentioned in the introduction, $H^2$ is a reproducing kernel Hilbert space of holomorphic functions on
 $\mathbb B$, with the reproducing kernel
$$
\mathcal{K}_{\lambda}(z)={1\over 1-\langle z,\lambda\rangle},
$$
which will be called Drury-Arveson space in infinitely many variables. Obviously, $H^2$ is unitarily equivalent to $\Gamma_s(\ell^2)$.
Let
$$\mathbb{N}^{(\mathbb{N})} = \left\{ (\alpha_n)_{n \in \mathbb{N}} \mid \alpha_n \in \mathbb{N} \text{ for all } n \in \mathbb{N}, \text{ and } \alpha_n = 0 \text{ for all but finitely many } n \right\}.$$
 For any $\alpha = (\alpha_1, \alpha_2, \cdots, \alpha_n, 0, 0, \cdots) \in \mathbb{N}^{(\mathbb{N})}$, set
 $$|\alpha|=\sum\limits_{i=1}^{n}\alpha_i, \quad \alpha!=\alpha_1! \cdots \alpha_n!,$$
 and $$z^\alpha = z_1^{\alpha_1} z_2^{\alpha_2} \cdots z_n^{\alpha_n}, \quad z \in \mathbb{B}.$$
It is straightforward to verify that the set $\{z^\alpha : \alpha \in \mathbb{N}^{(\mathbb{N})}\}$
is an orthogonal basis in $H^2$. Additionally, we have $$\|z^\alpha\|^2 = \frac{\alpha!}{|\alpha|!}.$$
As usual, write $M_i=M_{z_i}$ and it is obvious that \[
M_{i}^* z_1^{\alpha_1} \cdots z_n^{\alpha_n} =
\begin{cases}
0, & \text{if } \alpha_i = 0, \\
\frac{\alpha_i}{\alpha_1 + \cdots + \alpha_n} z_1^{\alpha_1} \cdots z_i^{\alpha_i - 1} \cdots z_n^{\alpha_n}, & \text{if } \alpha_i > 0.
\end{cases}
\]
The following proposition is trivial.
\begin{prop} Let $E_{0}$ be the orthogonal projection from $H^{2}$ onto the one-dimensional space of constant functions. Then $\sum\limits_{k=1}^m M_k M_{k}^*$ converges to $I-E_0$ in the strong operator topology as $m$ tends to infinity.
\end{prop}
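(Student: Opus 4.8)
The plan is to diagonalize each partial sum $\sum_{k=1}^{m} M_k M_k^*$ with respect to the orthogonal basis $\{z^\alpha : \alpha \in \mathbb{N}^{(\mathbb{N})}\}$ and then exploit the fact that every multi-index in $\mathbb{N}^{(\mathbb{N})}$ has finite support. First I would compute the action of $M_k M_k^*$ on a monomial $z^\alpha$ directly from the explicit formula for $M_k^*$ recorded just above the statement. When $\alpha_k > 0$ one has $M_k^* z^\alpha = \frac{\alpha_k}{|\alpha|}\, z^{\alpha-\epsilon_k}$ (here $\epsilon_k$ denotes the multi-index with a single $1$ in position $k$), and applying $M_k$ gives back $M_k M_k^* z^\alpha = \frac{\alpha_k}{|\alpha|}\, z^\alpha$; when $\alpha_k = 0$ both sides vanish. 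Hence each $M_k M_k^*$ is diagonal on the monomial basis, and so is the partial sum.

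Summing over $k$, for every $\alpha \neq 0$ one obtains
$$\sum_{k=1}^{m} M_k M_k^*\, z^\alpha = \frac{\sum_{k=1}^{m}\alpha_k}{|\alpha|}\, z^\alpha,$$
while for $\alpha = 0$ the monomial $z^0 = 1$ is annihilated by every $M_k^*$, so the partial sum kills the constant function. Because $\alpha$ has only finitely many nonzero entries, there is an index $n_0 = n_0(\alpha)$ with $\alpha_k = 0$ for $k > n_0$; thus for all $m \geq n_0$ the eigenvalue $\tfrac{1}{|\alpha|}\sum_{k=1}^{m}\alpha_k$ equals $1$. Consequently $\sum_{k=1}^{m} M_k M_k^* z^\alpha = z^\alpha = (I-E_0)z^\alpha$ for $\alpha \neq 0$ and all large $m$, and $\sum_{k=1}^{m} M_k M_k^* \cdot 1 = 0 = (I-E_0)\cdot 1$, which establishes convergence on each basis vector (indeed eventual equality).

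Finally, to pass from convergence on the dense span of the monomials to strong operator convergence on all of $H^2$, I would invoke a uniform bound: since each eigenvalue $\tfrac{1}{|\alpha|}\sum_{k=1}^{m}\alpha_k$ lies in $[0,1]$, every partial sum satisfies $0 \leq \sum_{k=1}^{m} M_k M_k^* \leq I$, so the sequence is bounded in operator norm by $1$. A uniformly bounded net of operators that converges pointwise on a dense set converges pointwise everywhere, by the standard $\epsilon/3$ argument, yielding the claimed strong convergence to $I-E_0$. The diagonalization is immediate from the formula for $M_k^*$, so the only step needing a word of justification is this last density-plus-boundedness passage; no genuine obstacle arises, consistent with the proposition being trivial.
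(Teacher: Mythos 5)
Your proof is correct and is exactly the argument the paper has in mind: the paper states the formula for $M_k^*$ on monomials immediately before the proposition and then declares the result trivial, the intended (omitted) proof being precisely your diagonalization $\sum_{k=1}^m M_kM_k^* z^\alpha = \bigl(\sum_{k=1}^m \alpha_k / |\alpha|\bigr) z^\alpha$ together with finiteness of the support of $\alpha$. Your closing step — uniform norm bound $\leq 1$ plus convergence on the dense span of monomials gives SOT convergence — correctly supplies the only detail the paper leaves unsaid.
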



The concept of inner sequence  was introduced by Arveson \cite{Arveson curvature} to study whether Arveson's curvature invariant $K(\cal M^\perp)$ is an integer or not. Recall that a sequence of multipliers $\{\phi_n\}$ {\color{red}in $H_d^2$} is called an inner sequence, if
$$
\sum\limits_{n=1}^\infty |\phi_n(\xi)|^2=1,\quad a.e. \text{ on }\partial \mathbb B_d.
$$
The existence problem of inner sequence, raised by Arveson in \cite[Page 225]{Arveson curvature}, was solved affirmatively by Greene, Richter and Sundberg\cite{inner multipliers}. Combining the ideas in \cite{inner multipliers} and the asymptotic techniques, the existence of inner sequence in $H^2$ also has an affirmative answer, which poses no essential difficulties. For the reader's convenience, we list the process below.

 Let $\mathbb{T}$ be an $\omega$-contraction on a Hilbert space $\mathcal{H}$ of finite rank.
For a submodule $\mathscr{M}$ of $H^2\otimes \text{ran}\,\Delta_\mathcal{H},$ by a similar proof to that of \cite[Page 191]{Arveson curvature}, there is a Hilbert space $E$ and a homomorphism of Hilbert module $\Phi:
H^2\otimes E\rightarrow H^2\otimes \text{ran}\,\Delta_\mathcal{H}$ such that
$$P_{\mathscr{M}}=\Phi \Phi^*.$$
By applying the Riesz lemma, we obtain a unique operator-valued function $z \mapsto \Phi(z) \in \mathcal{B}(E, \text{ran}\,\Delta_\mathcal{H})$
  that satisfies the following relation:
$$\left\langle \Phi(z) \zeta_1, \zeta_2 \right\rangle = \left\langle \Phi(1 \otimes \zeta_1), \mathcal{K}_z \otimes \zeta_2 \right\rangle, \quad \zeta_1 \in E, \quad \zeta_2 \in \text{ran}\,\Delta_\mathcal{H}, \quad z \in \mathbb{B}.$$
It is well known that any function \( f \in H^2(\mathbb{B}_m) \) has a boundary limit \( \widetilde{f} \) almost everywhere on \( \partial \mathbb{B}_m \). For the sake of clarity, we will not distinguish between \( f \) and \( \widetilde{f} \) throughout this paper.
Next, we will prove for almost every  $z^{(m)}\in \partial \mathbb{B}_{m}$, $\Phi(z^{(m)})$ is a partial isometry with $$\operatorname{rank} \Phi(z^{(m)})=\sup \left\{\operatorname{rank} \Phi(\lambda^{(m)}): \lambda^{(m)} \in\mathbb{ B}_{m}\right\}.$$ To do this, we need several lemmas, whose proofs are similar to that of \cite[Lemma 2.2, Theorem 4.3, Proposition 2.4 and Lemma 3.1]{inner multipliers}, and we list these lammas without proofs as follows.
\begin{lem}\label{Klambda}
For each $\lambda \in \mathbb{B},$ $\mathcal{K}_{\lambda} f \in H^2$ for any $f\in H^2.$
\end{lem}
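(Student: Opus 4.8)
The plan is to prove the slightly stronger statement that $\mathcal{K}_\lambda$ is a multiplier of $H^2$; the assertion $\mathcal{K}_\lambda f = M_{\mathcal{K}_\lambda}f \in H^2$ for every $f$ then follows at once. Write $g(z) = 1 - \langle z,\lambda\rangle$, so that $\mathcal{K}_\lambda = 1/g$. The first step is to realise multiplication by $g$ as a bounded operator. Since $\langle z,\lambda\rangle = \sum_i \bar\lambda_i z_i$, one has $M_g = I - T$ with $T = \sum_i \bar\lambda_i M_i$. I would check that this series converges in operator norm and that $\|T\| \le \|\lambda\| < 1$, using only that the coordinate multipliers form a row contraction: by the preceding Proposition, $\sum_i M_i M_i^* = I - E_0 \le I$, so for every tail one has $\|\sum_{i=n}^m \bar\lambda_i M_i\| \le (\sum_{i=n}^m |\lambda_i|^2)^{1/2}\,\|\sum_{i=n}^m M_i M_i^*\|^{1/2} \le (\sum_{i=n}^m |\lambda_i|^2)^{1/2}$, which tends to $0$ because $\lambda \in \ell^2$. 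This is where the $\ell^2$-summability of $\lambda$ does the essential work.

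Because $\|T\| < 1$, the operator $M_g = I - T$ is invertible in $\mathcal{B}(H^2)$, and I would then identify $M_g^{-1}$ as a multiplication operator by the standard commutant argument. Each $M_i$ commutes with every $M_j$, so $M_g$, and hence $M_g^{-1}$, commutes with all coordinate multipliers, and therefore with $M_p$ for every polynomial $p$. Setting $\psi := M_g^{-1}1$, where $1$ denotes the constant (vacuum) function, one gets $M_g^{-1}p = M_g^{-1}M_p 1 = M_p M_g^{-1}1 = p\cdot\psi$ for all $p \in \mathcal{P}_\infty$; since polynomials are dense in $H^2$ and $M_g^{-1}$ is bounded, this forces $M_g^{-1} = M_\psi$, so $\psi$ is a multiplier. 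Finally, evaluating $M_g M_\psi = I$ at $1$ yields $g\cdot\psi \equiv 1$, that is $\psi = 1/g = \mathcal{K}_\lambda$. Hence $\mathcal{K}_\lambda$ is a multiplier with $M_{\mathcal{K}_\lambda} = M_g^{-1}$ bounded, which gives the lemma.

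The norm estimates are routine; the one point that deserves care is the commutant-to-multiplier identification in infinitely many variables, namely that an operator commuting with all $M_i$ must be multiplication by its value on the constant function. This rests on the density of $\mathcal{P}_\infty$ in $H^2$ (built into the construction of $H^2$ as the completion of $\bigcup_m H^2_m$) together with $1$ being cyclic for the polynomial multipliers. I expect this to be the main conceptual step, but it is essentially the same mechanism as in the finite-variable Drury--Arveson setting, the infinite number of variables entering only through the $\ell^2$-condition on $\lambda$ that guarantees norm convergence of $T$ and the strict bound $\|T\| < 1$.
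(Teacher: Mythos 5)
Your proposal is correct, and it is essentially the argument the paper has in mind: the paper omits the proof of this lemma, citing that it is analogous to \cite[Lemma 2.2]{inner multipliers}, and that standard argument is exactly yours — realize $M_{1-\langle z,\lambda\rangle}=I-\sum_i\bar\lambda_i M_i$ as a bounded operator with norm of the series part at most $\|\lambda\|_{\ell^2}<1$ (via the row-contraction estimate), invert it by a Neumann series, and identify the inverse as multiplication by $\mathcal{K}_\lambda$. The only infinite-variable subtlety, norm convergence of $\sum_i\bar\lambda_i M_i$ from $\ell^2$-summability of $\lambda$, is handled correctly in your write-up.
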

\begin{lem}\label{f}
 For $f \in  H^2\otimes \emph{ran}\,\Delta_\mathcal{H},$
$$
\text{K-} \lim\limits_{\lambda^{(m)}\rightarrow z^{(m)}} \frac{\left\|f \mathcal{K}_{\lambda^{(m)}}\right\|}{\left\|\mathcal{K}_{\lambda^{(m)}}\right\|}=\|f(z^{(m)})\| \text { for } \sigma_m ~a.e.~ z^{(m)}=(z_1,\cdots,z_m) \in \partial \mathbb{B}_{m} \text {, }
$$
where $\text{K-} \lim\limits_{\lambda^{(m)}\rightarrow z^{(m)}} \frac{\left\|f \mathcal{K}_{\lambda^{(m)}}\right\|}{\left\|\mathcal{K}_{\lambda^{(m)}}\right\|}$ denotes the $K$-limit \cite[Page 319]{inner multipliers} of $\frac{\left\|f \mathcal{K}_{\lambda^{(m)}}\right\|}{\left\|\mathcal{K}_{\lambda^{(m)}}\right\|}$ at $z^{(m)}\in \partial \mathbb{B}_{m}.$
\end{lem}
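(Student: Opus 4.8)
The plan is to adapt the $K$-limit machinery of Greene, Richter and Sundberg \cite{inner multipliers} to the infinitely-many-variables setting, reducing the assertion to their finite-variable ratio estimate together with a tail estimate that exploits the fact that the approach region lives in a finite-dimensional slice. Throughout fix $m$ and write $z=(z',z'')$ with $z'=(z_1,\dots,z_m)$ and $z''=(z_{m+1},z_{m+2},\dots)$. Since $\lambda^{(m)}=(\lambda_1,\dots,\lambda_m,0,0,\dots)$ lies in the slice $\mathbb{C}^m$, the kernel $\mathcal{K}_{\lambda^{(m)}}$ depends only on $z'$, and by Lemma \ref{Klambda} the product $f\mathcal{K}_{\lambda^{(m)}}$ lies in $H^2\otimes\operatorname{ran}\Delta_\mathcal{H}$. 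Expanding $f=\sum_{\alpha}c_\alpha z^\alpha$ and grouping monomials according to their exponent $\alpha''$ in the variables $z''$, write $f=\sum_{\alpha''}z''^{\alpha''}g_{\alpha''}(z')$, where each $g_{\alpha''}$ is a function of $z'$ alone. Distinct values of $\alpha''$ produce mutually orthogonal monomials, and this orthogonality persists after multiplication by $\mathcal{K}_{\lambda^{(m)}}$, so that
\begin{equation*}
\big\|f\mathcal{K}_{\lambda^{(m)}}\big\|^2=\sum_{\alpha''}\big\|z''^{\alpha''}(g_{\alpha''}\mathcal{K}_{\lambda^{(m)}})\big\|^2 .
\end{equation*}

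The term $\alpha''=0$ carries the main contribution: $g_0(z')=f(z',0)$ is the restriction of $f$ to the slice, an element of $H_m^2\otimes\operatorname{ran}\Delta_\mathcal{H}$ whose boundary value on $\partial\mathbb{B}_m$ is exactly $f(z^{(m)})$. Since $g_0\mathcal{K}_{\lambda^{(m)}}$ and $\mathcal{K}_{\lambda^{(m)}}$ depend only on $z'$, their $H^2$-norms agree with their $H_m^2$-norms, and in particular $\|\mathcal{K}_{\lambda^{(m)}}\|^2=(1-\|\lambda^{(m)}\|^2)^{-1}$ is computed entirely in $H_m^2$. The finite-variable result of \cite{inner multipliers}, applied in $H_m^2$ to $g_0$, then gives
\begin{equation*}
\text{K-}\lim_{\lambda^{(m)}\to z^{(m)}}\frac{\|g_0\mathcal{K}_{\lambda^{(m)}}\|}{\|\mathcal{K}_{\lambda^{(m)}}\|}=\|f(z^{(m)})\|\qquad\text{for }\sigma_m\text{-a.e. }z^{(m)}\in\partial\mathbb{B}_m,
\end{equation*}
so it remains only to show that the contribution of the terms $\alpha''\neq0$ vanishes in the $K$-limit.

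The engine of the tail estimate is the following computation: for a function $h=\sum_n h_n$ of $z'$ with homogeneous parts $h_n$ of degree $n$, and for $|\alpha''|=k\geq 1$,
\begin{equation*}
\big\|z''^{\alpha''}h\big\|_{H^2}^2=\sum_{n}\frac{\alpha''!\,n!}{(n+k)!}\,\|h_n\|_{H_m^2}^2\le\sum_n\frac{k!\,n!}{(n+k)!}\,\|h_n\|_{H_m^2}^2\le\sum_n\frac{1}{n+1}\,\|h_n\|_{H_m^2}^2 .
\end{equation*}
As $\lambda^{(m)}\to z^{(m)}$ with $r=\|\lambda^{(m)}\|\to1$, the degree-$n$ part of $\mathcal{K}_{\lambda^{(m)}}$ has squared norm $r^{2n}$, so the mass of $g_{\alpha''}\mathcal{K}_{\lambda^{(m)}}$ concentrates in degrees $n\gtrsim(1-r^2)^{-1}$, where the weight $\frac{k!\,n!}{(n+k)!}$ is $O(1-r^2)$. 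Hence each tail term is $o\big(\|g_{\alpha''}\mathcal{K}_{\lambda^{(m)}}\|^2\big)$, and after dividing by $\|\mathcal{K}_{\lambda^{(m)}}\|^2$ the tail tends to $0$.

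The main obstacle is to make this tail argument uniform and to justify taking the $K$-limit for an arbitrary $f\in H^2\otimes\operatorname{ran}\Delta_\mathcal{H}$ rather than only for a polynomial. I would handle this exactly as in \cite{inner multipliers}: first establish the infinitely-many-variables analogues of their Lemma 2.2 and Proposition 2.4, furnishing a maximal-type bound of the form $\limsup_{\lambda^{(m)}\to z^{(m)}}\frac{\|f\mathcal{K}_{\lambda^{(m)}}\|}{\|\mathcal{K}_{\lambda^{(m)}}\|}\le(\mathcal{N}f)(z^{(m)})$ for a sublinear operator $\mathcal{N}$ controlled on $\partial\mathbb{B}_m$, together with the $\sigma_m$-a.e. existence of boundary $K$-limits (the analogue of their Theorem 4.3). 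Granting these, the identity is first checked on the dense set of polynomials, where $f$ has only finitely many nonzero $g_{\alpha''}$ and the estimate above applies termwise; a standard approximation argument using the maximal bound then propagates it to all $f$ for $\sigma_m$-a.e. $z^{(m)}$. The delicate point, absent in the finite-dimensional case, is that the approach regions live on the finite-dimensional boundaries $\partial\mathbb{B}_m$ while $f$ genuinely depends on infinitely many variables, so the maximal inequality must be proved with constants independent of how many of the variables $z''$ are retained.
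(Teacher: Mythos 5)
Your proposal has to be judged as an independent argument, because the paper contains no proof of this lemma at all: it appears in a block of statements whose proofs are declared to be ``similar to'' those of Lemma 2.2, Theorem 4.3, Proposition 2.4 and Lemma 3.1 of \cite{inner multipliers}, i.e.\ the reader is asked to re-run the Greene--Richter--Sundberg machinery with $f$ depending on infinitely many variables while the approach regions and the measure $\sigma_m$ live on the finite-dimensional boundary $\partial\mathbb{B}_m$. Your route is genuinely different and more modular: you black-box the finite-variable GRS result and isolate the new content. The core computations are correct --- the orthogonality of the pieces $z''^{\alpha''}(g_{\alpha''}\mathcal{K}_{\lambda^{(m)}})$ is legitimate because $\mathcal{K}_{\lambda^{(m)}}$ is a function of $z'$ alone; the weight identity $\|z''^{\alpha''}h_n\|^2=\frac{\alpha''!\,n!}{(n+|\alpha''|)!}\|h_n\|^2_{H_m^2}\le\frac{1}{n+1}\|h_n\|^2_{H_m^2}$ for $|\alpha''|\ge1$ is exact; and identifying $f(z^{(m)})$ with the boundary value of the slice function $g_0=f(\cdot,0)\in H_m^2\otimes\operatorname{ran}\Delta_{\mathcal{H}}$ is the only sensible reading of the statement, since radial limits of $f$ at points of $\partial\mathbb{B}_m\times\{0\}$ see only $g_0$. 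This reduction buys two things the paper's ``similar to GRS'' does not: it pins down exactly what is new in infinitely many variables (the vanishing of $(1-\|\lambda^{(m)}\|^2)\|(f-g_0)\mathcal{K}_{\lambda^{(m)}}\|^2$ in the $K$-limit), and it settles that point completely for polynomial $f$, where there are finitely many tail terms, each $O\bigl(\log\frac{1}{1-r^2}\bigr)=o\bigl(\|\mathcal{K}_{\lambda^{(m)}}\|^2\bigr)$ because polynomials are multipliers of $H_m^2$.

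Two caveats. First, your ``concentration of mass'' paragraph is only a heuristic and would not survive as a proof for general $f$: for an arbitrary $g_{\alpha''}\in H_m^2$ the operator-norm bound gives only $\|g_{\alpha''}\mathcal{K}_{\lambda}\|\le(1-|\lambda|)^{-1}\|g_{\alpha''}\|$, and the termwise $o(\cdot)$ claims cannot be summed over $\alpha''$, since $\sum_{\alpha''}\|g_{\alpha''}\|^2_{H_m^2}$ is not controlled by $\|f\|^2$ (the weights $\frac{\alpha''!\,n!}{(n+k)!}$ are tiny at high degree). Your closing paragraph correctly abandons this in favor of the polynomial case plus a maximal inequality plus density, which is the right mechanism. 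Second, that maximal inequality --- with constants independent of how many $z''$-variables are retained --- is asserted, not proved, and it is the genuine technical core of the lemma; but this is precisely the same ingredient the paper hides in the phrase ``similar to \cite{inner multipliers}''. So your argument is sound, strictly more explicit than the paper's treatment, and complete modulo the one adaptation that the paper itself takes for granted.
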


\begin{lem}\label{rank}
Set
$$
\mathfrak{m}_m=\sup \left\{\operatorname{rank} \Phi(\lambda^{(m)}): \lambda^{(m)} \in \mathbb{B}_{m}\right\} ,
$$
then $\operatorname{rank} \Phi(\lambda^{(m)})=\mathfrak{m}_m$ on $\lambda^{(m)} \in \mathbb{B}_{m} \backslash \mathcal{E}_m$, where $\mathcal{E}_m$ is at most a countable union of zero varieties of nonzero bounded analytic functions in $\mathbb{B}_{m}$ and $\operatorname{rank} \Phi(z^{(m)})=\mathfrak{m}_m$ for $\sigma_m$ a.e. $z^{(m)}\in \partial \mathbb{B}_{m}$.
\end{lem}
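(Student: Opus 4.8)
The plan is to reduce the statement about the operator rank of the analytic symbol $\Phi$ to the behaviour of finitely many scalar minors, which are bounded analytic functions on the finite-dimensional slice $\mathbb{B}_m$, and then to invoke the standard boundary theory for $H^\infty(\mathbb{B}_m)$. First I would fix an orthonormal basis $\{\eta_1,\dots,\eta_r\}$ of the finite-dimensional space $\operatorname{ran}\Delta_{\mathcal{H}}$ and an orthonormal basis $\{\zeta_i\}$ of $E$, and record the matrix entries $\phi_{ji}(z)=\langle\Phi(z)\zeta_i,\eta_j\rangle$. Because $\Phi$ is a homomorphism of Hilbert modules, each $\phi_{ji}$ is a bounded multiplier of $H^2$ and hence a bounded analytic function on $\mathbb{B}$; restricting to the slice $z=(z_1,\dots,z_m,0,0,\dots)$ yields, for every $i,j$, a bounded analytic function on $\mathbb{B}_m$. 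Since $\dim\operatorname{ran}\Delta_{\mathcal{H}}=r<\infty$, every $\Phi(\lambda^{(m)})$ has rank at most $r$, so only finitely many minor-sizes are relevant.

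For the interior statement I would argue as follows. Let $\mathfrak{m}_m$ be the maximal rank, attained at some $\lambda_0^{(m)}\in\mathbb{B}_m$. Then some $\mathfrak{m}_m\times\mathfrak{m}_m$ minor $D(\lambda^{(m)})$ of the matrix $(\phi_{ji})$ is nonzero at $\lambda_0^{(m)}$; being a polynomial in the bounded analytic entries, $D$ is itself a nonzero bounded analytic function on $\mathbb{B}_m$. At any point where $\operatorname{rank}\Phi(\lambda^{(m)})<\mathfrak{m}_m$ every $\mathfrak{m}_m\times\mathfrak{m}_m$ minor vanishes, in particular $D$ does, so the exceptional set $\mathcal{E}_m=\{\lambda^{(m)}:\operatorname{rank}\Phi(\lambda^{(m)})<\mathfrak{m}_m\}$ is contained in the zero variety of the single nonzero bounded analytic function $D$ (described exactly, $\mathcal{E}_m$ is the intersection of the zero varieties of the countably many maximal minors, hence at most a countable union of such varieties). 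Off $\mathcal{E}_m$ the rank equals $\mathfrak{m}_m$ by maximality, which is the first assertion.

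For the boundary statement I would combine two facts. On the one hand, each maximal minor $D$ lies in $H^\infty(\mathbb{B}_m)$, and a nonzero function in $H^\infty(\mathbb{B}_m)$ satisfies $\log|D|\in L^1(\partial\mathbb{B}_m)$, so its nontangential boundary values are nonzero for $\sigma_m$-almost every $z^{(m)}\in\partial\mathbb{B}_m$. Using Lemma \ref{f} together with the Fatou theorem on $\mathbb{B}_m$ to produce the boundary operator $\Phi(z^{(m)})$ as the K-limit of $\Phi(\lambda^{(m)})$, and using that a minor is a continuous (polynomial) function of the entries, the boundary value of $D$ coincides with the corresponding $\mathfrak{m}_m\times\mathfrak{m}_m$ minor of $\Phi(z^{(m)})$; hence $\operatorname{rank}\Phi(z^{(m)})\geq\mathfrak{m}_m$ almost everywhere. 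On the other hand, since $\operatorname{rank}\Phi\leq\mathfrak{m}_m$ throughout $\mathbb{B}_m$, every $(\mathfrak{m}_m+1)\times(\mathfrak{m}_m+1)$ minor vanishes identically on $\mathbb{B}_m$, so its boundary values vanish almost everywhere, giving $\operatorname{rank}\Phi(z^{(m)})\leq\mathfrak{m}_m$ almost everywhere. The two inequalities force $\operatorname{rank}\Phi(z^{(m)})=\mathfrak{m}_m$ for $\sigma_m$-almost every $z^{(m)}$.

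The routine parts, namely analyticity and boundedness of the entries and the determinantal bookkeeping, are straightforward. The main obstacle is the boundary analysis: one must justify that $\Phi(\lambda^{(m)})$ has genuine K-limits defining a boundary operator $\Phi(z^{(m)})$ almost everywhere, and that passing to these limits commutes with forming minors. This is precisely the point at which Lemma \ref{f} and the nontangential-limit theory on $\mathbb{B}_m$ are needed, and it is where the argument most closely follows \cite[Lemma 2.2 and Lemma 3.1]{inner multipliers}. The infinite dimensionality of $E$ causes no difficulty, since the rank is capped by $r=\dim\operatorname{ran}\Delta_{\mathcal{H}}$, so at every stage only finitely many rows and finitely many columns are involved.
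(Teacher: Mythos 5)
Your proof is correct and is essentially the argument the paper intends: the paper states Lemma \ref{rank} without proof, deferring to \cite[Lemma 3.1]{inner multipliers}, and that cited argument is precisely your reduction to scalar minors --- bounded analytic functions on $\mathbb{B}_m$ whose zero varieties control the interior rank drop, whose a.e.\ nonvanishing boundary values (via $\log|D|\in L^1(\sigma_m)$) give the lower boundary bound, and whose identically vanishing $(\mathfrak{m}_m+1)\times(\mathfrak{m}_m+1)$ counterparts cap the boundary rank. The one ingredient you take as given, the a.e.\ existence of SOT K-limits defining $\Phi(z^{(m)})$, is likewise taken as given in the paper (it is the analogue of \cite[Proposition 2.4]{inner multipliers}, assumed at the start of the proof of Proposition \ref{KH}), so this is not a gap relative to the paper's own standard.
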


For $\lambda^{(m)}=(\lambda_1,\cdots,\lambda_m) \in \mathbb{B}_{m}$, it is easy to see that
\begin{align}\label{zuihouyici}
\text{ran}\, \Phi(\lambda^{(m)})&=\{\Phi(\lambda^{(m)}) y: y \in E\}\nonumber\\&=\{\Phi(\lambda^{(m)}) f(\lambda): f \in H^2\otimes E\}\nonumber\\&=\left\{E_{\lambda^{(m)}}(\Phi f): f \in H^2\otimes E\right\}\nonumber\\&=E_{\lambda^{(m)}} \mathscr{M}.
\end{align}
The following proposition is inspired by \cite[Theorem 4.3]{inner multipliers}. For the reader's convenience, we will list its proof.
\begin{prop}\label{KH}Under the above assumptions,  for $\sigma_m$ a.e. $z^{(m)} \in \partial \mathbb{B}_{m}$, $\Phi(z^{(m)})$ is a partial isometry with $\operatorname{rank} \Phi(z^{(m)})=\mathfrak{m}_m.$
\end{prop}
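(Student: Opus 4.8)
The plan is to follow the template of \cite[Theorem 4.3]{inner multipliers}, transferring the partial-isometry property of the module map $\Phi$ to its fibre operators on the boundary. The constancy of the rank, namely $\operatorname{rank}\Phi(z^{(m)})=\mathfrak{m}_m$ for $\sigma_m$-a.e. $z^{(m)}$, is already supplied by Lemma \ref{rank}, so the whole task reduces to showing that $\Phi(z^{(m)})$ is a partial isometry almost everywhere. Recall that $\Phi\Phi^{*}=P_{\mathscr M}$ is a projection, so $\Phi$ is itself a partial isometry on $H^2\otimes E$; the point is to push this down to each $\Phi(z^{(m)})\in\mathcal B(E,\operatorname{ran}\Delta_{\mathcal H})$.

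First I would record the action of $\Phi^{*}$ on kernel vectors. Since $\Phi$ is a homomorphism of Hilbert modules, the defining relation for $\Phi(\lambda)$ yields $\Phi^{*}(\mathcal K_{\lambda}\otimes\eta)=\mathcal K_{\lambda}\otimes\Phi(\lambda)^{*}\eta$ for every $\eta\in\operatorname{ran}\Delta_{\mathcal H}$ and $\lambda\in\mathbb B$. Taking norms, and using $\Phi\Phi^{*}=P_{\mathscr M}$ together with the fact that $\Phi$ acts as pointwise multiplication, gives the two identities
$$\|\Phi(\lambda)^{*}\eta\|=\frac{\|P_{\mathscr M}(\mathcal K_{\lambda}\otimes\eta)\|}{\|\mathcal K_{\lambda}\|},\qquad P_{\mathscr M}(\mathcal K_{\lambda}\otimes\eta)=\mathcal K_{\lambda}\cdot\Phi\bigl(1\otimes\Phi(\lambda)^{*}\eta\bigr).$$
In particular, since $\|P_{\mathscr M}\|\le 1$ and $\|\mathcal K_{\lambda}\otimes\eta\|=\|\mathcal K_{\lambda}\|\,\|\eta\|$, the first identity shows $\|\Phi(\lambda)\|\le 1$ for every interior $\lambda$, so the boundary values $\Phi(z^{(m)})$ are contractive. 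Writing $g_{\lambda}:=\Phi(1\otimes\Phi(\lambda)^{*}\eta)\in\mathscr M$, the first identity reads $\|\Phi(\lambda)^{*}\eta\|=\|\mathcal K_{\lambda}g_{\lambda}\|/\|\mathcal K_{\lambda}\|$.

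Next I would take the K-limit as $\lambda^{(m)}\to z^{(m)}\in\partial\mathbb B_{m}$. Because $\lambda\mapsto\Phi(\lambda)^{*}\eta$ is conjugate-holomorphic and bounded, its K-limit $\Phi(z^{(m)})^{*}\eta$ exists a.e., so $g_{\lambda}\to g:=\Phi(1\otimes\Phi(z^{(m)})^{*}\eta)$ in the norm of $H^2\otimes\operatorname{ran}\Delta_{\mathcal H}$. Applying Lemma \ref{f} to the fixed function $g\in\mathscr M$ and controlling the error produced by $g_{\lambda}-g$, I obtain
$$\frac{\|P_{\mathscr M}(\mathcal K_{\lambda^{(m)}}\otimes\eta)\|}{\|\mathcal K_{\lambda^{(m)}}\|}\ \longrightarrow\ \|g(z^{(m)})\|=\|\Phi(z^{(m)})\Phi(z^{(m)})^{*}\eta\|,$$
whereas the left-hand side equals $\|\Phi(\lambda^{(m)})^{*}\eta\|\to\|\Phi(z^{(m)})^{*}\eta\|$. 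Comparing the two computations yields, for $\sigma_m$-a.e. $z^{(m)}$ and all $\eta$, the identity $\|\Phi(z^{(m)})^{*}\eta\|=\|\Phi(z^{(m)})\Phi(z^{(m)})^{*}\eta\|$. Setting $T=\Phi(z^{(m)})$ and $v=T^{*}\eta$, this says $\|v\|=\|Tv\|$ for every $v\in\operatorname{ran}T^{*}=(\ker T)^{\perp}$, which (the range being finite-dimensional, hence closed) is exactly the assertion that $T$ is a partial isometry. Combined with Lemma \ref{rank} this proves the proposition.

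The main obstacle will be the limit-interchange in the third step: the function $g_{\lambda}$ depends on $\lambda$ through $\Phi(\lambda)^{*}\eta$, so Lemma \ref{f} cannot be invoked for a single fixed function. I expect to handle this by splitting $\|\mathcal K_{\lambda}g_{\lambda}\|/\|\mathcal K_{\lambda}\|$ into the term for the limit function $g$, to which Lemma \ref{f} applies directly, plus an error term $\|\mathcal K_{\lambda}(g_{\lambda}-g)\|/\|\mathcal K_{\lambda}\|$ that must be shown to vanish along the K-region. Bounding this error requires a uniform estimate on the normalized multiplier norm of $\mathcal K_{\lambda}$ as $\lambda$ approaches the boundary, together with the norm convergence $g_{\lambda}\to g$; this is the technical heart of the argument, and it is precisely the point at which the asymptotic adaptation of the Greene--Richter--Sundberg method \cite{inner multipliers} is needed.
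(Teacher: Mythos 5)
Your overall skeleton coincides with the paper's (both follow \cite[Theorem 4.3]{inner multipliers}): the rank statement is delegated to Lemma \ref{rank}, the partial-isometry statement rests on the kernel identities $\Phi^{*}(\mathcal{K}_{\lambda}\otimes\eta)=\mathcal{K}_{\lambda}\otimes\Phi(\lambda)^{*}\eta$ and $\|\Phi(\lambda)^{*}\eta\|=\|P_{\mathscr{M}}(\mathcal{K}_{\lambda}\otimes\eta)\|/\|\mathcal{K}_{\lambda}\|$, on Lemma \ref{f}, and on passing to K-limits. But there is a genuine gap at exactly the step you single out as the technical heart. You propose to kill the error term $\|\mathcal{K}_{\lambda}(g_{\lambda}-g)\|/\|\mathcal{K}_{\lambda}\|$ by combining the norm convergence $g_{\lambda}\to g$ with a uniform estimate on the normalized multiplier norm of $\mathcal{K}_{\lambda}$. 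No such estimate exists: testing the multiplier on $\mathcal{K}_{\lambda}$ itself gives
\begin{equation*}
\sup_{\|h\|=1}\frac{\|\mathcal{K}_{\lambda}h\|}{\|\mathcal{K}_{\lambda}\|}\;\geq\;\frac{\|\mathcal{K}_{\lambda}^{2}\|}{\|\mathcal{K}_{\lambda}\|^{2}}\;=\;\frac{(1+\|\lambda\|^{2})^{1/2}}{(1-\|\lambda\|^{2})^{1/2}}\;\longrightarrow\;\infty
\end{equation*}
as $\|\lambda\|\to1$, and K-regions do not improve this since the blow-up is isotropic. On the other side, $\|g_{\lambda}-g\|\leq\|(\Phi(\lambda)^{*}-\Phi(z^{(m)})^{*})\eta\|$ tends to $0$ only by virtue of a.e.\ existence of SOT boundary limits, which carries no rate; a rate-free $o(1)$ against a factor blowing up like $(1-\|\lambda\|)^{-1/2}$ cannot be concluded to vanish. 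So the limit interchange, justified the way you plan to justify it, fails.

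The paper sidesteps the issue by never forming an identity involving a $\lambda$-dependent function. For any \emph{fixed} $f\in\mathscr{M}$ one has $\mathcal{K}_{\lambda^{(m)}}f\in\mathscr{M}$ (Lemma \ref{Klambda}), and testing $P_{\mathscr{M}}(\mathcal{K}_{\lambda^{(m)}}\otimes x)$ against $\mathcal{K}_{\lambda^{(m)}}f$ gives only the one-sided bound $\|\Phi(\lambda^{(m)})^{*}x\|\geq|\langle f(\lambda^{(m)}),x\rangle|\,\|\mathcal{K}_{\lambda^{(m)}}\|/\|\mathcal{K}_{\lambda^{(m)}}f\|$; choosing $f=\Phi(1\otimes y)$ and $x=\Phi(z^{(m)})y$, every term converges (the denominator by Lemma \ref{f} applied to this fixed $f$, the numerator by the SOT K-limit of $\Phi(\lambda^{(m)})$), and the limiting inequality $\|\Phi(z^{(m)})^{*}\Phi(z^{(m)})y\|\geq\|\Phi(z^{(m)})y\|$ together with $\|\Phi(z^{(m)})^{*}\|\leq1$ already yields the partial isometry; the reverse inequality is free, so no error term ever appears. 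If you want to keep your equality route, it can be repaired, but by a different estimate from the one you propose: since $\Phi$ intertwines multiplication by $\mathcal{K}_{\lambda}$ (Neumann series of the row multiplier $\sum_{i}\bar{\lambda}_{i}M_{i}$, convergent in norm) and $\|\Phi\|\leq1$, one has $\mathcal{K}_{\lambda}(g_{\lambda}-g)=\Phi\bigl(\mathcal{K}_{\lambda}\otimes(\Phi(\lambda)^{*}-\Phi(z^{(m)})^{*})\eta\bigr)$, hence $\|\mathcal{K}_{\lambda}(g_{\lambda}-g)\|/\|\mathcal{K}_{\lambda}\|\leq\|(\Phi(\lambda)^{*}-\Phi(z^{(m)})^{*})\eta\|\to0$, and the multiplier norm of $\mathcal{K}_{\lambda}$ never enters. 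Your remaining steps (the kernel identities, the finite-dimensionality of $\operatorname{ran}\Delta_{\mathcal{H}}$ forcing $\operatorname{ran}\Phi(z^{(m)})^{*}$ to be closed, and the appeal to Lemma \ref{rank}) are correct.
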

\begin{proof} Assume $z^{(m)} \in$ $\partial \mathbb{B}_{m}$  such that the $K$-limits of $\Phi(\lambda^{(m)})$ and $\Phi(\lambda^{(m)})^{*}$ exist at $z^{(m)}$ in the strong operator topology. We have to show that for a.e. $z^{(m)} \in \partial \mathbb{B}_{m},~ \Phi(z^{(m)})^{*}$ is an isometry on $\operatorname{ran} \Phi(z^{(m)})$. Since $\left\|\Phi(z^{(m)})^{*}\right\| \leqslant 1,$ it suffices to show that for all $y \in E$ with $\Phi(z^{(m)}) y \neq 0$
\begin{equation}\label{mudi}
\| \Phi(z^{(m)})^{*} \Phi(z^{(m)}) y \|_{E} \geq \| \Phi(z^{(m)}) y \|.
\end{equation}

Note that for $\lambda^{(m)}=(\lambda_1,\cdots,\lambda_m) \in \mathbb{B}_{m}$ and $x \in \text{ran}\,\Delta_\mathcal{H}$ we have
$$
\left\|P_{\mathscr{M}} \left(\mathcal{K}_{\lambda^{(m)}} \otimes x\right)\right\|=\sup \left\{\left|\langle f(\lambda^{(m)}), x\rangle\right|: f \in \mathscr{M},\|f\| \leqslant 1\right\},
$$
because for $f \in \mathscr{M},$  $$\left|\langle f(\lambda^{(m)}), x\rangle\right|=\left|\left\langle f, \mathcal{K}_{\lambda^{(m)}} \otimes x\right\rangle\right|=\left|\left\langle f, P_{\mathscr{M}} \left(\mathcal{K}_{\lambda^{(m)}} \otimes x\right)\right\rangle\right| \leqslant\|f\|\left\|P_{\mathscr{M}} \left(\mathcal{K}_{\lambda^{(m)}} \otimes x\right)\right\|$$
with equality if $f=P_{\mathscr{M}} \left(\mathcal{K}_{\lambda^{(m)}} \otimes x\right).$ Hence by Lemma \ref{Klambda},
for nonzero $f \in \mathscr{M}$, $\lambda^{(m)} \in \mathbb{B}_{m}$, and $x \in \text{ran}\,\Delta_\mathcal{H},$
\begin{equation}\label{mudi1}
\begin{aligned}
\left\|\Phi(\lambda^{(m)})^{*} x\right\|&=\frac{\left\|\mathcal{K}_{\lambda^{(m)}}\otimes \Phi(\lambda^{(m)})^{*} x\right\|}{\left\|\mathcal{K}_{\lambda^{(m)}}\right\|}\\&=\frac{\left\|\Phi^{*}\left(\mathcal{K}_{\lambda^{(m)}}\otimes  x\right)\right\|}{\left\|\mathcal{K}_{\lambda^{(m)}}\right\|}\\&=\frac{\left\|P_{\mathscr{M}}\left(\mathcal{K}_{\lambda^{(m)}}\otimes x\right)\right\|}{\left\|\mathcal{K}_{\lambda^{(m)}}\right\|} \\
 & \geqslant\frac{\left|\left\langle\left(\mathcal{K}_{\lambda^{(m)}} f\right)(\lambda^{(m)}), x\right\rangle\right|}{\left\|\mathcal{K}_{\lambda^{(m)}}\right\|\left\|\mathcal{K}_{\lambda^{(m)}} f\right\|}\\&=\frac{\left|\langle f(\lambda^{(m)}), x\rangle\right|}{\left\|\mathcal{K}_{\lambda^{(m)}} f\right\| /\left\|\mathcal{K}_{\lambda^{(m)}}\right\|} .
 \end{aligned}
\end{equation}

Now for $y \in E$ with $\Phi(z^{(m)}) y \neq 0,$ set $f=\Phi (1\otimes y)$ and $x=\Phi(z^{(m)}) y.$ Then Lemma \ref{f} and \eqref{mudi1} ensure \eqref{mudi}.
\end{proof}

\section{Arveson's version of the Gauss-Bonnet-Chern formula in infinitely many variables}

In the present section, we will establish the asymptotic curvature invariant and the asymptotic Euler characteristic of finite rank $\omega$-contractive Hilbert modules.
 For a Hilbert space $\mathcal{H}$, recall that an $\omega$-contraction is an infinite tuple of commuting operators $\mathbb{T}=\left(T_{1}, \cdots, T_{m}, \cdots\right)$ acting on $\mathcal{H},$
 satisfying
\begin{eqnarray}\label{3.1}
\label{infinite con}T_{1}T_{1}^{*}+\cdots+T_{m}T_{m}^{*}\leq I ~\operatorname{for} ~\operatorname{all} ~m>0.
\end{eqnarray}

By \cite[Remark 3.2]{Arveson 3}, an infinite tuple $\mathbb{T}$ of commuting operators is an $\omega$-contraction
iff for all positive integers  $m$ and $\{\zeta_i\}_{i=1}^{m} \subseteq \cal H$,
\begin{eqnarray}\label{3.210}
\left\|\sum\limits_{i=1}^m T_{i}\zeta_{i}\right\|^2\leq \sum\limits_{i=1}^m||\zeta_{i}||^2.
\end{eqnarray}
To simplify the notation. Set \begin{eqnarray}\label{RH}
R=\overline{\text{ran}\,\Delta_\mathcal{H}}
\end{eqnarray}
temporarily.
For $\omega$-contractive Hilbert modules, we have the following result.
\begin{thm}\label{2.6} Let $\mathbb{T}$ be an $\omega$-contraction on a Hilbert space $\mathcal{H}.$
 Then for every orthonormal basis $e_{1}, \cdots, e_{m}, \cdots$ of $\ell^2,$ there is a unique bounded operator $$L: \Gamma_{s}(\ell^2) \otimes R \rightarrow \mathcal{H}$$ satisfying
\begin{eqnarray}\label{3.4}
L(1 \otimes \xi)=\Delta_\mathcal{H} \xi,~\text{and}~~
L\left(e_{i_{1}} e_{i_{2}} \cdots e_{i_{n}} \otimes \xi\right)=T_{i_{1}} T_{i_{2}} \cdots T_{i_{n}} \Delta_\mathcal{H} \xi, \quad i_{1}, \cdots, i_{n} \in \mathbb{Z}^+,
\end{eqnarray}
where
$e_{i_{1}} e_{i_{2}} \cdots e_{i_{n}}$ is the symmetric tensor product of $e_{i_1},\cdots, e_{i_n}$.
 In addition, $\|L\| \leq 1.$ Moreover, if\, $\mathbb{T}$ is pure, then $L$ is a coisometry: $L L^{*}=I_{\mathcal{H}}$.
\end{thm}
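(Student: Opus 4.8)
The plan is to construct $L$ on a dense subspace, isolate a single scalar identity that encodes the $\omega$-contraction structure, and then read off boundedness and—in the pure case—the coisometry property from it. Identifying $\Gamma_s(\ell^2)$ with $H^2$ as in Section 3, the symmetric tensor $e_{i_1}\cdots e_{i_n}$ corresponds to the monomial $z^\alpha$, where $\alpha\in\mathbb N^{(\mathbb N)}$ records the multiplicities of the indices and $\|z^\alpha\|^2=\alpha!/|\alpha|!$. Because the $T_i$ commute, the product $T_{i_1}\cdots T_{i_n}=T^\alpha$ depends only on $\alpha$ and is symmetric in $(i_1,\dots,i_n)$; hence the prescription $e^\alpha\otimes\xi\mapsto T^\alpha\Delta_{\mathcal H}\xi$ is consistent and defines a linear map $L$ on the algebraic span $\mathcal D$ of $\{e^\alpha\otimes\xi:\alpha\in\mathbb N^{(\mathbb N)},\ \xi\in R\}$, a dense subspace of $\Gamma_s(\ell^2)\otimes R$. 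Uniqueness is then immediate, since any two bounded operators satisfying \eqref{3.4} agree on $\mathcal D$, hence everywhere.

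The heart of the argument is the scalar identity
\[
\sum_{\alpha\in\mathbb N^{(\mathbb N)}}\frac{|\alpha|!}{\alpha!}\,\bigl\|\Delta_{\mathcal H}T^{*\alpha}h\bigr\|^2=\Bigl\langle\bigl(I-\text{SOT-}\lim_{n\to\infty}\phi^n(I)\bigr)h,\,h\Bigr\rangle,\qquad h\in\mathcal H.
\]
To prove it I would first show, by induction on $k$, that $\sum_{|\alpha|=k}\tfrac{k!}{\alpha!}\|\Delta_{\mathcal H}T^{*\alpha}h\|^2=\langle\phi^k(\Delta_{\mathcal H}^2)h,h\rangle$. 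The inductive step uses $\phi^k(\Delta_{\mathcal H}^2)=\phi\bigl(\phi^{k-1}(\Delta_{\mathcal H}^2)\bigr)$, the defining expression $\langle\phi(Y)h,h\rangle=\sum_i\langle YT_i^*h,T_i^*h\rangle$ for $Y\ge0$, and the reindexing $\alpha=\beta+\varepsilon_i$ together with the combinatorial fact $\sum_{i:\alpha_i\ge1}\tfrac{(k-1)!}{(\alpha-\varepsilon_i)!}=\tfrac{k!}{\alpha!}$. Summing over $k$ and telescoping $\sum_{k=0}^{n-1}\phi^k(\Delta_{\mathcal H}^2)=I-\phi^n(I)$—valid since $\Delta_{\mathcal H}^2=I-\phi(I)$ and $\phi$ is linear—then letting $n\to\infty$ yields the identity, whose right-hand side is $\le\|h\|^2$ because $\phi^n(I)\ge0$.

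With this in hand the remaining steps are short. For boundedness I would write a general $\eta\in\mathcal D$ as $\eta=\sum_\alpha z^\alpha\otimes\xi_\alpha$ and estimate, for $h\in\mathcal H$,
\[
|\langle L\eta,h\rangle|=\Bigl|\sum_\alpha\langle\xi_\alpha,\Delta_{\mathcal H}T^{*\alpha}h\rangle\Bigr|\le\Bigl(\sum_\alpha\|z^\alpha\|^2\|\xi_\alpha\|^2\Bigr)^{1/2}\Bigl(\sum_\alpha\tfrac{|\alpha|!}{\alpha!}\|\Delta_{\mathcal H}T^{*\alpha}h\|^2\Bigr)^{1/2},
\]
by splitting each summand as $(\|z^\alpha\|\,\|\xi_\alpha\|)(\|z^\alpha\|^{-1}\|\Delta_{\mathcal H}T^{*\alpha}h\|)$ and applying Cauchy--Schwarz; the first factor equals $\|\eta\|$ and the second is $\le\|h\|$ by the identity, giving $\|L\|\le1$. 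When $\mathbb T$ is pure, $\text{SOT-}\lim_n\phi^n(I)=0$, so the identity becomes equality with $\|h\|^2$; computing $L^*h=\sum_\alpha\hat z^\alpha\otimes\|z^\alpha\|^{-1}\Delta_{\mathcal H}T^{*\alpha}h$ with $\hat z^\alpha=z^\alpha/\|z^\alpha\|$ then gives $\|L^*h\|^2=\sum_\alpha\tfrac{|\alpha|!}{\alpha!}\|\Delta_{\mathcal H}T^{*\alpha}h\|^2=\|h\|^2$, so $L^*$ is an isometry and $LL^*=I_{\mathcal H}$.

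The main obstacle I anticipate is purely the infinitely-many-variables bookkeeping in the scalar identity: justifying the multinomial expansion of $\phi^k$ and the reindexing of the resulting infinite sums. Here the $\omega$-contraction bound $\sum_i\|T_i^*g\|^2\le\|g\|^2$, equivalent to \eqref{3.210}, guarantees that every series met in the induction converges absolutely, so the interchanges of summation are legitimate; once convergence is secured, the combinatorics is identical to the finite-variable case treated by Arveson.
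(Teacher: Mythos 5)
Your proposal is correct, and its computational core is the same as the paper's: the telescoping identity $\sum_{k=0}^{n}\phi^k(\Delta_{\mathcal H}^2)=I-\phi^{n+1}(I)$, which in your multi-index form reads $\sum_{|\alpha|\le n}\tfrac{|\alpha|!}{\alpha!}\|\Delta_{\mathcal H}T^{*\alpha}h\|^2=\|h\|^2-\langle\phi^{n+1}(I)h,h\rangle$ and in the paper appears as the computation of $\sum_n\|\zeta_n\|^2$, summed over tuples $(i_1,\dots,i_n)$ rather than multi-indices. The difference is architectural, and it is a genuine (dual) reorganization rather than a new idea. The paper constructs the adjoint first: it defines $D\eta=(\zeta_0,\zeta_1,\dots)$ with $\zeta_n=\sum_{i_1,\dots,i_n}e_{i_1}\otimes\cdots\otimes e_{i_n}\otimes\Delta_{\mathcal H}T_{i_n}^*\cdots T_{i_1}^*\eta$, so that $\|D\|\le 1$ (and isometry of $D$ in the pure case) falls straight out of the identity, sets $L=D^*$, and then must \emph{verify} the relations \eqref{3.4} by a pairing computation against the orthogonal basis of $\Gamma_s(\ell^2)$. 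You instead define $L$ directly on the dense span of monomials, so \eqref{3.4} holds by construction and consistency follows from commutativity of the $T_i$; the work then shifts to the Cauchy--Schwarz duality estimate $|\langle L\eta,h\rangle|\le\|\eta\|\,\|h\|$ for boundedness, and to identifying $L^*h=\sum_\alpha\tfrac{|\alpha|!}{\alpha!}\,z^\alpha\otimes\Delta_{\mathcal H}T^{*\alpha}h$ before applying the identity once more in the pure case. What each route buys: the paper's choice makes boundedness and the coisometry property immediate but defers the intertwining formula to the end, while yours makes the formula and uniqueness trivial but requires the extension-by-density step and an explicit computation of $L^*$; your induction on $|\alpha|$ with the combinatorial fact $\sum_{i:\alpha_i\ge 1}\tfrac{(k-1)!}{(\alpha-\varepsilon_i)!}=\tfrac{k!}{\alpha!}$ also makes explicit the regrouping that the paper absorbs into its iterated-sum notation. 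Both arguments are complete; note only that your appeal to ``$\phi$ is linear'' in the telescoping step should be read as additivity of SOT-limits of increasing sums of positive operators, which is exactly how the paper uses it as well.
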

\begin{proof}
For every $\eta \in \mathcal{H}$,
 $$
\begin{aligned}
&\sum_{i_{1}=1}^{\infty}\cdots \sum_{ i_{n}=1}^{\infty}||e_{i_{1}} \otimes \cdots \otimes e_{i_{n}}||^{2}
\langle\Delta_\mathcal{H} T_{i_{n}}^{*} \cdots T_{i_{1}}^{*} \eta, \Delta_\mathcal{H} T_{i_{n}}^{*} \cdots T_{i_{1}}^{*} \eta\rangle
\\&=\sum_{i_{1}=1}^{\infty}\cdots \sum_{ i_{n}=1}^{\infty}\langle T_{i_{1}} \cdots T_{i_{n}}\Delta_\mathcal{H} ^2T_{i_{n}}^{*} \cdots T_{i_{1}}^{*} \eta, \eta\rangle\\&=\langle\phi^n(I)\eta,\eta\rangle-\langle\phi^{n+1}(I)\eta,\eta\rangle\\&\leq
||\eta||^{2}.
\end{aligned}
$$
Let $$
\zeta_{n}=\sum_{i_{1}=1}^{\infty}\cdots \sum_{ i_{n}=1}^{\infty} e_{i_{1}} \otimes \cdots \otimes e_{i_{n}} \otimes \Delta_\mathcal{H} T_{i_{n}}^{*} \cdots T_{i_{1}}^{*} \eta,
$$
then $\zeta_{n}\in {\ell^2}^{\otimes n}.$ The commutativity of $\{T_i^*\}_{i=1}^{\infty}$ implies that
$\zeta_{n}\in \Gamma_{s}^{n}(\ell^2).$
Moreover,
\begin{eqnarray}\label{3.3}
\sum_{n=0}^{\infty}\left\|\zeta_{n}\right\|^{2}=||\eta||^2-\lim\limits_{n\rightarrow\infty}\langle \phi^{n+1}(I) \eta, \eta\rangle\leq ||\eta||^2,
\end{eqnarray}
we have $(\zeta_{0}, \zeta_{1}, \zeta_{2}, \cdots)\in  \Gamma_{s}(\ell^2) \otimes R.$
Define $D:\mathcal{H}\rightarrow\Gamma_{s}(\ell^2) \otimes R$ by $$D(\eta)=(\zeta_{0}, \zeta_{1}, \zeta_{2}, \cdots),$$
    and let $L=D^*.$
By (\ref{3.3}), $||L||\leq1.$ Furthermore, if $\mathbb{T}$ is pure, then
$$\lim\limits_{n\rightarrow \infty}\langle\phi^n(I)\eta,\eta\rangle=0,$$
which implies that $L$ is a coisometry.

Next, we will check \eqref{3.4}. For $\zeta=1 \otimes \xi$ with $\xi \in R,$
$$
\langle 1 \otimes \xi, D \eta\rangle=\langle 1 \otimes \xi, 1 \otimes \Delta \eta\rangle=\langle\Delta \xi, \eta\rangle,
$$
and for $\zeta=e_{j_{1}} \cdots e_{j_{n}} \otimes \xi$ for $n \geq 1, j_{1}, \cdots, j_{n} \in \mathbb{Z}^+$ and $\xi \in R,$
$$
\begin{aligned}
\langle L(\zeta), \eta\rangle&=\left\langle e_{j_{1}} \cdots e_{j_{n}} \otimes \xi, D \eta\right\rangle \\&=\left\langle e_{j_{1}} \otimes\cdots \otimes e_{j_{n}} \otimes \xi, D \eta\right\rangle \\&=\sum_{i_{1}=1}^{\infty}\cdots \sum_{ i_{n}=1}^{\infty}\left\langle e_{j_{1}} \cdots e_{j_{n}} \otimes \xi, e_{i_{1}} \otimes \cdots \otimes e_{i_{n}} \otimes \Delta_\mathcal{H} T_{i_{n}}^{*} \cdots T_{i_{1}}^{*} \eta\right\rangle
\\&=\left\langle T_{j_{1}} \cdots T_{j_{n}} \Delta_\mathcal{H} \xi, \eta\right\rangle.
\end{aligned}
$$
At last, please note $\{1, e_{i_1},e_{i_2}e_{i_3},e_{i_4}e_{i_5}e_{i_6},\cdots :i_k \geq 1, k \geq 1\}$ is an orthogonal basis in $\Gamma_s(\ell^2),$ and we obtain the uniqueness of $L.$
\end{proof}

The proof of the following Lemma is almost same as \cite[Lemma 1.8]{Arveson curvature}, and for the convenience of the readers,
 we write it down here without proof.

\begin{lem}\label{lem2.7}
Let $\mathcal{H}$ be an $\omega$-contractive Hilbert $\mathcal{P}_\infty$-module. Let $L: H^{2} \otimes R \rightarrow \mathcal{H}$ be the operator defined in Theorem \ref{2.6}. Then there is a Hilbert $\mathcal{P}_\infty$-module $E$ and a homomorphism $\Phi \in \operatorname{hom}\left(E, H^{2} \otimes R \right)$ such that
$$
L^{*} L+\Phi \Phi^{*}=I_{H^{2} \otimes R} .
$$
\end{lem}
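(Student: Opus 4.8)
The plan is to obtain $\Phi$ as a factorization of the positive operator $B:=I_{H^2\otimes R}-L^*L$, which is positive because $\|L\|\le 1$ by Theorem~\ref{2.6}. The essential point is that $B$ is not an arbitrary positive operator: it is the defect of a contractive \emph{module} map, and this will force $B=\Phi\Phi^*$ for a multiplier (module homomorphism) $\Phi$. Concretely, I would reduce the statement to the positivity of an associated operator kernel and then invoke the factorization available for the Drury--Arveson kernel. First I would evaluate $L$ on the reproducing kernel vectors. Writing $T_w=\text{SOT-}\lim_m\sum_{i=1}^m\bar w_iT_i$, which converges with $\|T_w\|\le\|w\|<1$ for $w\in\mathbb B$ by \eqref{3.210}, the relation \eqref{3.4} together with $\mathcal K_w=\sum_\alpha\frac{|\alpha|!}{\alpha!}\bar w^\alpha z^\alpha$ and the multinomial theorem gives
\[
L(\mathcal K_w\otimes\zeta)=(I-T_w)^{-1}\Delta_\mathcal{H}\zeta,\qquad \zeta\in R .
\]
Putting $A(w)=(I-T_w)^{-1}\Delta_\mathcal{H}\in\mathcal B(R,\mathcal H)$ and using $(M_i\otimes I)^*(\mathcal K_w\otimes\zeta)=\bar w_i(\mathcal K_w\otimes\zeta)$, a short computation yields
\[
(1-\langle z,w\rangle)\,\langle B(\mathcal K_w\otimes\zeta),\mathcal K_z\otimes\xi\rangle=\langle G(z,w)\zeta,\xi\rangle,\qquad G(z,w)=I_R-(1-\langle z,w\rangle)A(z)^*A(w).
\]

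The heart of the argument is to show that $G$ is a positive-definite $\mathcal B(R)$-valued kernel on $\mathbb B\times\mathbb B$. Fixing finitely many pairs $(z^{(k)},\zeta^{(k)})$ and setting $a_k=A(z^{(k)})\zeta^{(k)}$, $b=\sum_k\zeta^{(k)}$ and $c_i=\sum_k\bar z_i^{(k)}a_k$, the resolvent identity $a_k=\Delta_\mathcal{H}\zeta^{(k)}+\sum_i\bar z^{(k)}_iT_ia_k$ produces the self-similar relation $\sum_k a_k=\Delta_\mathcal{H}b+\sum_iT_ic_i$, and expanding $G$ gives
\[
\sum_{k,l}\langle G(z^{(k)},z^{(l)})\zeta^{(l)},\zeta^{(k)}\rangle=\|b\|^2-\Big\|\Delta_\mathcal{H}b+\sum_iT_ic_i\Big\|^2+\sum_i\|c_i\|^2 .
\]
Here I would use the crucial structural input coming from the defect: since $\Delta_\mathcal{H}^2+\text{SOT-}\lim_m\sum_{i=1}^mT_iT_i^*=I_\mathcal{H}$, the infinite row $W=[\,\Delta_\mathcal{H},T_1,T_2,\dots\,]$ satisfies $WW^*=I_\mathcal{H}$, so $W$ is a coisometry and in particular a contraction. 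Applying $\|W(\eta_0,\eta_1,\dots)\|^2\le\|\eta_0\|^2+\sum_i\|\eta_i\|^2$ with $\eta_0=b$ and $\eta_i=c_i$ bounds the middle term above by $\|b\|^2+\sum_i\|c_i\|^2$, whence the displayed sum is $\ge 0$. This proves $G\ge 0$.

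Finally, $G(z,w)$ is holomorphic in $z$ and anti-holomorphic in $w$ (through $T_z^*$ and $T_w$), so it is a sesqui-holomorphic positive kernel. Since the Drury--Arveson kernel $\mathcal K$ is a complete Nevanlinna--Pick kernel in both finitely and infinitely many variables, the positivity of $G$ is exactly the condition yielding a factorization $G(z,w)=\Phi(z)\Phi(w)^*$ by a holomorphic multiplier; equivalently, the Kolmogorov decomposition together with the lurking-isometry argument produces a coefficient space, hence a Hilbert $\mathcal P_\infty$-module $E$ and a homomorphism $\Phi\in\operatorname{hom}(E,H^2\otimes R)$ with $\Phi\Phi^*=B$, that is $L^*L+\Phi\Phi^*=I_{H^2\otimes R}$.

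I expect the main obstacle to be the positivity of $G$: this is precisely where one must use the full $\omega$-contractivity (encoded in the coisometry $W$ and in \eqref{3.210}) rather than the weaker fact $\|L\|\le 1$, and where the convergence of the infinite sums defining $T_w$, $W$ and $\sum_iT_ic_i$ must be controlled carefully in the strong operator topology. The passage from the positive kernel $G$ to the multiplier $\Phi$ is then routine once the Nevanlinna--Pick property of $H^2$ is invoked, and runs parallel to the finitely-many-variables argument of \cite[Lemma 1.8]{Arveson curvature}.
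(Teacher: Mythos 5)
Your proposal is correct, and it supplies genuine content, because the paper itself gives no argument for Lemma \ref{lem2.7}: it only asserts that the proof of \cite[Lemma 1.8]{Arveson curvature} carries over to infinitely many variables. Measured against that finite-variable argument, your route is the reproducing-kernel reformulation of the same proof rather than a genuinely new one, but the packaging differs. Arveson works at the operator level: with $B=I-L^{*}L\ge 0$, he verifies the inequality $B-\sum_{i}(M_{z_i}\otimes I)B(M_{z_i}\otimes I)^{*}\ge 0$ and then appeals to his factorization theorem, which says that the positive operators satisfying this inequality are exactly those of the form $\Phi\Phi^{*}$ with $\Phi$ a homomorphism. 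You instead verify positivity of the normalized kernel $G$ and produce $\Phi$ by Kolmogorov decomposition plus the lurking isometry. These are equivalent: testing Arveson's inequality on a finite combination $u=\sum_{k}\mathcal{K}_{z^{(k)}}\otimes\zeta^{(k)}$ gives exactly $\sum_{k,l}\langle G(z^{(k)},z^{(l)})\zeta^{(l)},\zeta^{(k)}\rangle$, and such vectors are dense; moreover the decisive step is identical in both proofs, namely that the row $[\Delta_{\mathcal{H}},T_1,T_2,\cdots]$ is a coisometry because $\Delta_{\mathcal{H}}^{2}+\text{SOT-}\lim_{m}\sum_{i=1}^{m}T_iT_i^{*}=I$. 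What your version buys is precisely what the paper leaves implicit: the convergence bookkeeping ($T_w$ as an SOT limit, $\ell^{2}$-summability of the $c_i$, well-definedness and contractivity of the infinite row, seen most cleanly by noting that $W^{*}\eta=(\Delta_{\mathcal{H}}\eta,T_1^{*}\eta,T_2^{*}\eta,\cdots)$ is an isometry) that turns ``the same proof works'' into an actual argument. What Arveson's formulation buys is brevity and independence from kernel machinery; in particular, the complete Nevanlinna--Pick property you invoke at the end is not really what drives the final step: once $G\ge 0$, Kolmogorov gives $G(z,w)=F(z)F(w)^{*}$ with $F$ holomorphic, and the identity $\|M_F^{*}u\|^{2}=\langle Bu,u\rangle$ on kernel combinations already shows that $M_F$ is bounded, intertwines the module actions, and satisfies $M_FM_F^{*}=B$, with no Pick-interpolation input needed.
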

Let $\mathcal{H}$ be an $\omega$-contractive Hilbert module of finite rank. For $z=(z_{1}, \cdots, z_{m},\cdots)\in \ell^2,$
set $S_m(z)=\bar{z}_{1} T_{1}+\cdots+\bar{z}_{m} T_{m}.$ {\color{red}Then
for $\xi\in \mathcal{H},$ by \eqref{3.210}, we have $$\|S_m(z)\xi-S_{m+p}(z)\xi\|^2=\|\bar{z}_{m} T_{m}\xi+\cdots+\bar{z}_{m+p} T_{m+p}\xi\|^2\leq\sum\limits_{i=m}^{m+p}||z_{i}\xi||^{2}=\sum\limits_{i=m}^{m+p}|z_{i}|^{2}||\xi||^{2},$$
 which shows that $S_m(z)\xi$ is a Cauchy sequence. Hence 
$S_m(z)$ converges in the strong operator topology.} Let
$$
T(z)=\text{SOT-}\lim\limits_{m\rightarrow\infty}S_m(z),
$$
therefore
$$\|T(z)\xi\|^2=\lim\limits_{m\rightarrow\infty}||S_m(z)\xi||^2\leq\lim\limits_{m\rightarrow\infty}\sum \limits_{i=1}^{m}||z_{i}\xi||^{2}=||z||_{\ell^2}^2||\xi||^2,
$$
$i.e.~\|T(z)\|\leq ||z||_{\ell^2}.$
In particular, for $z\in \mathbb{B},$
$\|T(z)\|<1,$ and hence $I-T(z)$ is invertible. Define a positive-operator-valued function $F(z)$ on $\mathbb{B}$ by
\begin{eqnarray}
\label{F}F(z) \xi=\Delta\left(I-T(z)^{*}\right)^{-1}(I-T(z))^{-1} \Delta \xi, \quad \xi \in \Delta_ \mathcal{H}.
\end{eqnarray}

By similar proofs to that of \cite[Theorem 1.2 and Theorem A]{Arveson curvature}, we have
\begin{lem} \label{thm5.9} Let $F: \mathbb{B} \rightarrow \mathcal{B}(R)$ be defined in \eqref{F}. There is a Hilbert space $E$ and an operator-valued holomorphic function $\Phi: \mathbb{B} \rightarrow \mathcal{B}(E, R)$ such that
\begin{equation}\label{mudi2}
\begin{aligned}
\left(1-|z|^{2}\right) F(z)=I-\Phi(z) \Phi(z)^{*}, \quad z \in \mathbb{B} .
\end{aligned}
\end{equation}
\end{lem}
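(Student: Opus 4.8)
The plan is to mimic Arveson's derivation of \cite[Theorem 1.2 and Theorem A]{Arveson curvature}, transporting the whole computation to the point-evaluation functionals of $H^2\otimes R$. First I would invoke Lemma \ref{lem2.7} to obtain a homomorphism with $L^*L+\Phi\Phi^*=I_{H^2\otimes R}$; following exactly the discussion preceding Lemma \ref{rank}, I may take the module to be a free one $H^2\otimes E$ with coefficient Hilbert space $E$, and realize $\Phi$ as multiplication by a symbol. Applying the Riesz lemma as in Section 3 produces the operator-valued holomorphic function $z\mapsto\Phi(z)\in\mathcal B(E,R)$ obeying $\Phi^*(\mathcal K_z\otimes\xi)=\mathcal K_z\otimes\Phi(z)^*\xi$ for $\xi\in R$. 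This $\Phi$ is the one claimed in the statement, its holomorphy being inherited from the multiplier structure together with the analyticity notion introduced in Section 3.

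The next step is the key evaluation identity for $L$. Writing $\eta_z=\sum_i\bar z_i e_i\in\ell^2$, one checks degree by degree that $\mathcal K_z=\sum_{n\ge0}\eta_z^{\,n}$, where $\eta_z^{\,n}$ is the $n$-fold symmetric power, since the degree-$n$ component of $\mathcal K_z$ is the function $w\mapsto\langle w,z\rangle^n=\sum_{i_1,\dots,i_n}\bar z_{i_1}\cdots\bar z_{i_n}\,w_{i_1}\cdots w_{i_n}$. Hence by the defining formula \eqref{3.4} of $L$,
\[
L(\mathcal K_z\otimes\xi)=\sum_{n\ge0}T(z)^n\Delta\xi=(I-T(z))^{-1}\Delta\xi,\qquad \xi\in R,
\]
the series converging because $\|T(z)\|<1$ on $\mathbb B$ and $\|L\|\le 1$ by Theorem \ref{2.6}. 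Introducing the point-evaluation $\widehat E_z\colon H^2\otimes R\to R$, $\widehat E_zf=f(z)$, with adjoint $\widehat E_z^*\xi=\mathcal K_z\otimes\xi$ and $\widehat E_z\widehat E_z^*=\|\mathcal K_z\|^2 I_R=(1-|z|^2)^{-1}I_R$, the identity above reads $L\widehat E_z^*\xi=(I-T(z))^{-1}\Delta\xi$, so that by the definition of $F$ in \eqref{F},
\[
\widehat E_z(L^*L)\widehat E_z^*=\bigl(L\widehat E_z^*\bigr)^*\bigl(L\widehat E_z^*\bigr)=\Delta(I-T(z)^*)^{-1}(I-T(z))^{-1}\Delta=F(z).
\]

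I would then compute the remaining term. Denoting by $\widehat E_z^{\,E}$ the corresponding evaluation on $H^2\otimes E$, the multiplier relation $\Phi^*\widehat E_z^*=(\widehat E_z^{\,E})^*\Phi(z)^*$ gives $\widehat E_z\Phi\Phi^*\widehat E_z^*=\Phi(z)\,\widehat E_z^{\,E}(\widehat E_z^{\,E})^*\,\Phi(z)^*=(1-|z|^2)^{-1}\Phi(z)\Phi(z)^*$. Compressing the identity $L^*L+\Phi\Phi^*=I_{H^2\otimes R}$ of Lemma \ref{lem2.7} by $\widehat E_z$ on the left and $\widehat E_z^*$ on the right, and using $\widehat E_z\widehat E_z^*=(1-|z|^2)^{-1}I_R$, I obtain
\[
F(z)+\frac{1}{1-|z|^2}\Phi(z)\Phi(z)^*=\frac{1}{1-|z|^2}I_R,
\]
and multiplication by $1-|z|^2$ yields the desired formula \eqref{mudi2}.

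The main obstacle I anticipate is the infinite-variable bookkeeping rather than the algebra: one must justify that $\mathcal K_z$ decomposes as the norm-convergent sum of symmetric powers $\eta_z^{\,n}$ and that the termwise application of $L$ is legitimate (both resting on $\|T(z)\|<1$ on $\mathbb B$ and $\|L\|\le 1$), and one must confirm that the holomorphic symbol $\Phi(z)$ together with its evaluation relation persists in the $\mathbb B\subseteq\ell^2$ setting, which is precisely the technical content already secured for multipliers in Section 3. Once these points are in place, the disc formula follows verbatim from Arveson's argument.
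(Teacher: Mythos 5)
Your proposal is correct and is essentially the paper's own proof: the paper establishes this lemma simply by citing Arveson's Theorem 1.2 and Theorem A and asserting the argument carries over, and your reconstruction --- Lemma \ref{lem2.7} with a free domain $H^2\otimes E$ (exactly as the paper itself uses it in \eqref{fuluxuyao}), the symbol $\Phi(z)$ via the Riesz lemma, the evaluation identity $L(\mathcal{K}_z\otimes\xi)=(I-T(z))^{-1}\Delta\xi$ coming from $\mathcal{K}_z=\sum_{n\ge 0}\eta_z^{\,n}$, and compression of $L^*L+\Phi\Phi^*=I$ by point evaluations --- is precisely that argument transported to $\mathbb{B}\subseteq\ell^2$. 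The convergence issues you flag ($\|T(z)\|<1$, $\|L\|\le 1$, norm convergence of the symmetric-power expansion of $\mathcal{K}_z$) are exactly the infinite-variable adjustments the paper leaves implicit, so nothing is missing.
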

 \begin{lem}\label{eq11} For an $\omega$-contractive Hilbert module $\mathcal{H}$ of finite rank, let $F: \mathbb{B} \rightarrow \mathcal{B}(R)$ be defined by \eqref{F}. Then for $\sigma_m$-almost every $z^{(m)} \in \partial \mathbb{B}_{m}$, the limit
 \begin{equation}\label{mudi3}
\begin{aligned}
K_{0}^m(z^{(m)})=\lim _{r \rightarrow 1}\left(1-r^{2}\right) \operatorname{trace} F(r z_1,\cdots,rz_m,0,0,\cdots)
\end{aligned}
\end{equation}
exists and satisfies
$$
0 \leqq K_{0}^m(z^{(m)}) \leqq \operatorname{rank} \mathcal{H}.
$$
 \end{lem}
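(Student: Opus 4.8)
The plan is to reduce everything to the factorization already furnished by Lemma \ref{thm5.9} and then invoke a Fatou-type theorem on the finite-dimensional ball $\mathbb{B}_m$. By Lemma \ref{thm5.9} there is a Hilbert space $E$ and an operator-valued holomorphic function $\Phi:\mathbb{B}\to\mathcal{B}(E,R)$ with $(1-|z|^2)F(z)=I-\Phi(z)\Phi(z)^*$ for all $z\in\mathbb{B}$. Since $R=\overline{\operatorname{ran}\Delta_\mathcal{H}}$ is finite-dimensional with $\dim R=\operatorname{rank}\mathcal{H}=:N_0$, each $\Phi(z)\Phi(z)^*$ is a positive operator on $R$ dominated by $I_R$, and taking traces gives the scalar identity $(1-|z|^2)\operatorname{trace}F(z)=N_0-\operatorname{trace}\bigl(\Phi(z)\Phi(z)^*\bigr)$.

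First I would restrict to the first $m$ coordinates. Writing $z=(rz_1,\dots,rz_m,0,0,\dots)$ with $z^{(m)}=(z_1,\dots,z_m)\in\partial\mathbb{B}_m$ one has $|z|=r$, so the identity becomes
$$(1-r^2)\operatorname{trace}F(rz_1,\dots,rz_m,0,0,\dots)=N_0-\operatorname{trace}\bigl(\Phi^{(m)}(rz^{(m)})\Phi^{(m)}(rz^{(m)})^*\bigr),$$
where $\Phi^{(m)}(w):=\Phi(w_1,\dots,w_m,0,0,\dots)$ is the restriction of $\Phi$ to the slice $\mathbb{B}_m$. As the slice is a finite-dimensional complex subspace, $\Phi^{(m)}$ is a bounded holomorphic $\mathcal{B}(E,R)$-valued function on $\mathbb{B}_m$ with $\|\Phi^{(m)}(w)\|\le 1$. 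This reduces matters to proving that $\operatorname{trace}(\Phi^{(m)}(w)\Phi^{(m)}(w)^*)$ admits radial boundary limits $\sigma_m$-a.e.

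Next I would use $\dim R<\infty$ to split the trace into finitely many pieces. Fixing an orthonormal basis $\{\eta_j\}_{j=1}^{N_0}$ of $R$,
$$\operatorname{trace}\bigl(\Phi^{(m)}(w)\Phi^{(m)}(w)^*\bigr)=\sum_{j=1}^{N_0}\bigl\|\Phi^{(m)}(w)^*\eta_j\bigr\|_E^2.$$
For each fixed $j$, the scalar components of $w\mapsto\Phi^{(m)}(w)^*\eta_j$ are conjugate-holomorphic, so after the $\sigma_m$-preserving change of variable $w\mapsto\bar w$ the map $w\mapsto\Phi^{(m)}(\bar w)^*\eta_j$ is a bounded holomorphic $E$-valued function on $\mathbb{B}_m$. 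To this the vector-valued Fatou theorem applies, yielding strong radial limits $\sigma_m$-a.e.; transporting back by $z^{(m)}\mapsto\overline{z^{(m)}}$ shows that each $\|\Phi^{(m)}(rz^{(m)})^*\eta_j\|_E^2$ converges as $r\to 1$ for $\sigma_m$-a.e.\ $z^{(m)}$, and hence so does their finite sum. Consequently $K_0^m(z^{(m)})$ exists a.e. The two-sided bound is immediate from $0\le\Phi^{(m)}(w)\Phi^{(m)}(w)^*\le I_R$, which forces $0\le\operatorname{trace}(\Phi^{(m)}\Phi^{(m)*})\le N_0$ and hence $0\le(1-r^2)\operatorname{trace}F\le N_0$ for every $r$; the bound then passes to the limit.

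The main obstacle is the a.e.\ existence of the radial limit, i.e.\ the Fatou step. A naive term-by-term argument over an orthonormal basis of $E$ fails, since that index set may be infinite and the partial sums are not monotone in $r$. The device that resolves this is precisely the finite-dimensionality of $R=\overline{\operatorname{ran}\Delta_\mathcal{H}}$: it lets me organize the (possibly infinite) sum as $N_0$ squared norms of bounded conjugate-holomorphic $E$-valued functions and apply the Hilbert-space-valued Fatou theorem to each of these finitely many functions, rather than to the individual scalar entries. Once the identity from Lemma \ref{thm5.9} is in hand, everything else is routine bookkeeping.
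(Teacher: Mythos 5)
Your proposal is correct and follows essentially the route the paper itself intends: the paper justifies this lemma only by deferring to Arveson's proofs of Theorem 1.2 and Theorem A in \cite{Arveson curvature}, and your argument is precisely that adaptation --- the factorization $(1-|z|^2)F(z)=I-\Phi(z)\Phi(z)^*$ from Lemma \ref{thm5.9}, restriction to the slice $(rz_1,\dots,rz_m,0,\dots)$ with $|z|=r$, the finite-dimensionality of $R=\overline{\operatorname{ran}\Delta_{\mathcal{H}}}$ to write the trace as a finite sum $\sum_{j}\|\Phi^{(m)}(\cdot)^*\eta_j\|_E^2$, and the Hilbert-space-valued Fatou theorem on $\mathbb{B}_m$ applied to each of these finitely many bounded (conjugate-)holomorphic functions. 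No gaps: the bound $0\le K_0^m\le\operatorname{rank}\mathcal{H}$ follows, as you say, from $0\le\Phi\Phi^*\le I_R$.
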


Set \begin{equation}\label{fulu4.9}
K_m(\mathcal{H})=\int_{\partial \mathbb{B}_{m}} K_{0}^m(z^{(m)}) d \sigma_m.
\end{equation}
 To define the asymptotic curvature invariant,
we need the following technical lemma.


\begin{lem} \label{lem3.6} If $\mathcal{H}$ is a pure $\omega$-contractive Hilbert module of finite rank, then $K_{m}(\mathcal{H})$ is an integer, and
$$
\begin{aligned}
K_{m}(\mathcal{H})&=\operatorname{rank} \mathcal{H}-\sup \{\operatorname{dim} E_{\lambda} \mathscr{M}: \lambda \in \mathbb{B}_{m}\},
\end{aligned}
$$
where $L$ is defined in Theorem \ref{2.6} and $\mathscr{M}=\operatorname{ker}L.$
\end{lem}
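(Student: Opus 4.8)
The plan is to evaluate $K_0^m(z^{(m)})$ pointwise almost everywhere by means of the factorization of $F$ supplied by Lemma \ref{thm5.9}, and then to integrate the resulting constant over $\partial\mathbb{B}_m$. Since $\mathcal{H}$ has finite rank, $R=\overline{\operatorname{ran}\,\Delta_\mathcal{H}}$ is finite-dimensional with $\dim R=\operatorname{rank}\mathcal{H}$, so every trace appearing below is a finite sum over an orthonormal basis of $R$ and is continuous in its argument. Note also that, $L$ being a coisometry in the pure case, $\Phi(z)\Phi(z)^{*}$ is the range projection associated with $\mathscr{M}=\ker L$.

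First I would restrict the identity \eqref{mudi2}, namely $(1-|z|^{2})F(z)=I-\Phi(z)\Phi(z)^{*}$, to the slice $z=(rz_1,\cdots,rz_m,0,0,\cdots)$ with $z^{(m)}=(z_1,\cdots,z_m)\in\partial\mathbb{B}_m$ and $0<r<1$. On this slice $|z|^{2}=r^{2}\sum_{i=1}^{m}|z_i|^{2}=r^{2}$, so taking the trace on $R$ gives
$$
\left(1-r^{2}\right)\operatorname{trace} F(rz_1,\cdots,rz_m,0,0,\cdots)=\operatorname{rank}\mathcal{H}-\operatorname{trace}\bigl(\Phi(rz^{(m)})\Phi(rz^{(m)})^{*}\bigr).
$$
Letting $r\to 1$ and invoking the definition \eqref{mudi3} of $K_0^m$, the proof reduces to computing the radial limit of $\operatorname{trace}\bigl(\Phi(rz^{(m)})\Phi(rz^{(m)})^{*}\bigr)$.

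For the limit I would appeal to Proposition \ref{KH} together with Lemma \ref{rank}: for $\sigma_m$-almost every $z^{(m)}\in\partial\mathbb{B}_m$ the boundary value $\Phi(z^{(m)})$ exists as the $K$-limit (hence the radial limit) of $\Phi$ and $\Phi^{*}$ in the strong operator topology, and it is a partial isometry with $\operatorname{rank}\Phi(z^{(m)})=\mathfrak{m}_m$. Because $R$ is finite-dimensional, strong convergence of $\Phi(rz^{(m)})^{*}$ to $\Phi(z^{(m)})^{*}$ permits passage to the limit inside the trace, whence
$$
\lim_{r\to 1}\operatorname{trace}\bigl(\Phi(rz^{(m)})\Phi(rz^{(m)})^{*}\bigr)=\operatorname{trace}\bigl(\Phi(z^{(m)})\Phi(z^{(m)})^{*}\bigr)=\mathfrak{m}_m,
$$
the last equality holding because $\Phi(z^{(m)})\Phi(z^{(m)})^{*}$ is the range projection of a partial isometry of rank $\mathfrak{m}_m$. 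Thus $K_0^m(z^{(m)})=\operatorname{rank}\mathcal{H}-\mathfrak{m}_m$ for $\sigma_m$-almost every $z^{(m)}$, which is a constant integer; integrating over the probability space $\partial\mathbb{B}_m$ yields $K_m(\mathcal{H})=\operatorname{rank}\mathcal{H}-\mathfrak{m}_m$. Finally, by \eqref{zuihouyici} one has $\operatorname{ran}\Phi(\lambda^{(m)})=E_{\lambda^{(m)}}\mathscr{M}$, so that $\mathfrak{m}_m=\sup\{\operatorname{rank}\Phi(\lambda^{(m)}):\lambda^{(m)}\in\mathbb{B}_m\}=\sup\{\dim E_\lambda\mathscr{M}:\lambda\in\mathbb{B}_m\}$, which gives the asserted identity and shows $K_m(\mathcal{H})$ is an integer.

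The step I expect to be the main obstacle is the justification of interchanging the radial limit $r\to 1$ with the trace and matching it to the $K$-limit furnished by Proposition \ref{KH}. The finite-dimensionality of $R$ is exactly what makes this clean, since the trace is then a continuous finite sum and strong convergence of $\Phi(rz^{(m)})^{*}$ on $R$ suffices. Some care is required to absorb the exceptional set on which $\Phi(z^{(m)})$ fails to be a partial isometry of rank $\mathfrak{m}_m$ and the exceptional set on which the radial limit defining $K_0^m$ fails to exist into a single $\sigma_m$-null set, after which the pointwise identity $K_0^m(z^{(m)})=\operatorname{rank}\mathcal{H}-\mathfrak{m}_m$ holds and the integration is immediate.
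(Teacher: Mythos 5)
Your proposal is correct and takes essentially the same route as the paper's proof: the paper likewise identifies, via purity and Lemma \ref{lem2.7}, the $\Phi$ of the factorization \eqref{mudi2} with $\Phi\Phi^{*}=P_{\mathscr{M}}$ for $\mathscr{M}=\ker L$, then combines \eqref{mudi2}, \eqref{mudi3}, \eqref{fulu4.9}, Proposition \ref{KH} and \eqref{zuihouyici} exactly as you do to get $K_m(\mathcal{H})=\operatorname{rank}\mathcal{H}-\mathfrak{m}_m=\operatorname{rank}\mathcal{H}-\sup\{\dim E_{\lambda}\mathscr{M}:\lambda\in\mathbb{B}_m\}$. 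The only slip is in your opening remark: the projection onto $\mathscr{M}$ is the global operator $\Phi\Phi^{*}$ on $H^{2}\otimes R$, not the pointwise $\Phi(z)\Phi(z)^{*}$ (which is merely a contraction on $R$ for $z\in\mathbb{B}$), but this does not affect the argument.
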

\begin{proof}
Since $\mathcal{H}$ is pure,  we have $L L^{*}=I_\mathcal{H},$ hence $L^{*} L=P_{\text {ker } L^{\perp}}.$
By Lemma \ref{lem2.7}, there exists \begin{equation}\label{fuluxuyao}
                                     \Phi: H^{2} \otimes E \rightarrow H^{2} \otimes R
                                    \end{equation}
 such that $I-L^{*} L=\Phi \Phi^{*}.$ This means that $P_{\text {Ker}L}=\Phi \Phi^{*}$. Set $\mathscr{M}=\operatorname{ker} L=\text {ran}\Phi,$ then
by \eqref{mudi2}, \eqref{mudi3}, \eqref{fulu4.9}, Proposition \ref{KH} and \eqref{zuihouyici},
 $$
\begin{aligned}
K_{m}(\mathcal{H})&=\int_{\partial\mathbb{ B}_{m}} \operatorname{trace}( I_{R}-\Phi( z^{(m)}) \Phi( z^{(m)})^{*}) d\sigma_m=\operatorname{rank} \mathcal{H}-\sup \left\{\operatorname{dim} E_{\lambda} \mathscr{M}: \lambda \in \mathbb{B}_{m}\right\} .
\end{aligned}
$$
\end{proof}

By Lemma \ref{lem3.6}, if $\cal H$ is a pure $\omega$-contractive Hilbert module, then $K_{m+1}(\mathcal{H})\leq K_{m}(\mathcal{H}),$ and we define the asymptotic curvature invariant by
 \begin{eqnarray}\label{K(H)}
K (\mathcal{H})=\lim \limits_{m \rightarrow \infty} K_{m}(\mathcal{H}).
\end{eqnarray}

The following lemma establishes the relationship between the ``asymptotic" fiber dimension and the fiber dimension of the submodule.
\begin{lem}\label{Theorem3.6}
For a submodule $\mathcal{M}$ of $H^{2}\otimes \mathbb{C}^N,$ $$\lim\limits_{m\rightarrow\infty}\operatorname{sup} \{\operatorname{dim}~E_{\lambda}\mathcal{M}:\lambda\in \mathbb{B}_m\}=fd(\mathcal{M}).$$
\end{lem}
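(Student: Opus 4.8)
The plan is to prove the two inequalities
$$\lim_{m\to\infty}\sup\{\dim E_\lambda\mathcal{M}:\lambda\in\mathbb{B}_m\}\leq fd(\mathcal{M})\quad\text{and}\quad fd(\mathcal{M})\leq\lim_{m\to\infty}\sup\{\dim E_\lambda\mathcal{M}:\lambda\in\mathbb{B}_m\},$$
where I am identifying $\mathbb{B}_m$ with the slice $\{\lambda\in\mathbb{B}:\lambda_i=0\text{ for }i>m\}$ of $\mathbb{B}$. First I would observe that the quantity $a_m:=\sup\{\dim E_\lambda\mathcal{M}:\lambda\in\mathbb{B}_m\}$ is monotone nondecreasing in $m$, since $\mathbb{B}_m\subseteq\mathbb{B}_{m+1}$ under this identification; together with the obvious bound $a_m\leq fd(\mathcal{M})$ (as every $\lambda\in\mathbb{B}_m$ is a point of $\mathbb{B}$), this shows the limit exists and is at most $fd(\mathcal{M})$, giving the first inequality.

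For the reverse inequality, the idea is to show that the supremum defining $fd(\mathcal{M})$ is already captured, up to any fixed shortfall, on some finite slice. Fix $\varepsilon>0$ (in fact we only need integer values, so it suffices to realize the value $fd(\mathcal{M})$ or $fd(\mathcal{M})-1$ eventually). Choose a point $\mu\in\mathbb{B}$ with $\dim E_\mu\mathcal{M}=fd(\mathcal{M})=:l$; such a point exists because $fd(\mathcal{M})$ is the supremum of an integer-valued function and localizations have dimension at most $N$. By definition of $E_\mu\mathcal{M}$, there are elements $f_1,\dots,f_l\in\mathcal{M}$ whose values $f_1(\mu),\dots,f_l(\mu)$ are linearly independent in $\mathbb{C}^N$. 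The key step is then to approximate: each $f_j\in H^2\otimes\mathbb{C}^N$ can be approximated in norm by an element $g_j$ lying in $H^2_{m_0}\otimes\mathbb{C}^N$ for some finite $m_0$ (a finite linear combination of monomials $z^\alpha\otimes e_k$ with $\alpha$ supported on the first $m_0$ coordinates), and by the reproducing-kernel bound established in Section 3, norm convergence in $H^2$ implies pointwise convergence of values, uniformly on compact subsets of $\mathbb{B}$.

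The main obstacle, and the heart of the argument, is that replacing $f_j$ by $g_j$ moves us outside $\mathcal{M}$, so I cannot directly conclude $\dim E_\mu\mathcal{M}'\geq l$ for the truncated submodule. To handle this I would instead truncate the evaluation point rather than the functions: let $\mu^{(m)}=(\mu_1,\dots,\mu_m,0,0,\dots)\in\mathbb{B}_m$ be the projection of $\mu$ onto the first $m$ coordinates. Since $\mu^{(m)}\to\mu$ in $\ell^2$ and the evaluation functionals are continuous and depend continuously on the base point (again via the kernel estimate), we have $f_j(\mu^{(m)})\to f_j(\mu)$ in $\mathbb{C}^N$ for each $j$ as $m\to\infty$. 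Because linear independence of the finite family $\{f_j(\mu)\}_{j=1}^l$ is an open condition (equivalently, some $l\times l$ minor of the matrix $[f_j(\mu)]$ is nonzero), the vectors $f_1(\mu^{(m)}),\dots,f_l(\mu^{(m)})$ remain linearly independent for all sufficiently large $m$. Since each $f_j\in\mathcal{M}$, this yields $\dim E_{\mu^{(m)}}\mathcal{M}\geq l$, hence $a_m\geq l=fd(\mathcal{M})$ for large $m$, and therefore $\lim_{m\to\infty}a_m\geq fd(\mathcal{M})$. Combining the two inequalities completes the proof. The only point requiring care is verifying that $\mu^{(m)}$ indeed lies in $\mathbb{B}_m$ and converges to $\mu$, which follows immediately since $\|\mu^{(m)}\|_{\ell^2}\leq\|\mu\|_{\ell^2}<1$ and $\|\mu^{(m)}-\mu\|_{\ell^2}\to0$.
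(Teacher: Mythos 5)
Your proposal is correct and follows essentially the same route as the paper: both prove the nontrivial inequality by fixing functions $f_1,\dots,f_l\in\mathcal{M}$ whose values at a point $\mu$ are independent, truncating the \emph{evaluation point} to $\mu^{(m)}=(\mu_1,\dots,\mu_m,0,\dots)$, and using continuity of point evaluation together with the openness of linear independence (in the paper, persistence of a nonzero $k\times k$ determinant formed from the component functions $f_{ij}$ with $f_{ij}(\lambda)=\delta_{ij}$) to conclude $\dim E_{\mu^{(m)}}\mathcal{M}\geq l$ for large $m$. The only cosmetic difference is that the paper runs the argument for an arbitrary $\lambda\in\mathbb{B}$ rather than one attaining the supremum, which sidesteps even the (correct) attainment remark you make.
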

\begin{proof}
It suffices to prove that $$fd(\mathcal{M})\leq \lim\limits_{m\rightarrow\infty}\operatorname{sup} \{\operatorname{dim}~E_{\lambda}\mathcal{M}:\lambda\in \mathbb{B}_m\}.$$
 For any fixed $\lambda \in \mathbb{B},$ let $\{y_i\}_{i=1}^k$ be an orthonormal basis of $E_{\lambda}\mathcal{M},$ and $\{g_i\}_{i=1}^{\infty}$ be an orthonormal basis of $H^2.$
 By the definition of $E_{\lambda}\mathcal{M},$ there exist $f_i\in \mathcal{M}$ such that $f_i(\lambda)=y_i, 1\leq i\leq k.$ Extend $\{y_i\}_{i=1}^k$ to an orthonormal basis of $\mathbb{C}^N,$ denoted by
$\{y_i\}_{i=1}^N.$
 Write $$f_i=\sum\limits_{j=1}^{N}\sum\limits_{l=1}^{\infty}a_{lij}(g_l\otimes y_j)=\sum\limits_{j=1}^{N}(\sum\limits_{l=1}^{\infty}a_{lij}g_l\otimes y_j),$$
and set $f_{ij}=\sum\limits_{l=1}^{\infty}a_{lij}g_l.$
Since $f_i(\lambda)=y_i,$ we have $f_{ij}(\lambda)=\delta_{ij}.$
Set $$h_{ij}^m=f_{ij}(\lambda_1,\cdots,\lambda_m,0,\cdots),$$
then
\begin{eqnarray}\label{det1}
\lim\limits_{m \rightarrow \infty}\text{det} \left[
                 \begin{array}{cccc}
                   h_{11}^{m} & h_{12}^{m} & \cdots & h_{1k}^{m} \\
                    h_{21}^{m} & h_{22}^{m} & \cdots & h_{2k}^{m} \\
                   \vdots & \vdots & \ddots & \vdots \\
                    h_{k1}^{m} & h_{k2}^{m} & \cdots & h_{kk}^{m} \\
                 \end{array}
               \right]=\text{det} \left[
                 \begin{array}{cccc}
                   f_{11}(\lambda) & f_{12}(\lambda) & \cdots & f_{1k}(\lambda) \\
                    f_{21}(\lambda) & f_{22}(\lambda) & \cdots & f_{2k}(\lambda) \\
                   \vdots & \vdots & \ddots & \vdots \\
                    f_{k1}(\lambda) & f_{k2}(\lambda) & \cdots & f_{kk}(\lambda) \\
                 \end{array}
               \right]=1.
\end{eqnarray}
Hence there exists an $m_0>0$ such that for $m\geq m_0,$
 \begin{eqnarray}\label{det}
\text{det} \left[
                 \begin{array}{cccc}
                   h_{11}^{m} & h_{12}^{m} & \cdots & h_{1k}^{m} \\
                    h_{21}^{m} & h_{22}^{m} & \cdots & h_{2k}^{m} \\
                   \vdots & \vdots & \ddots & \vdots \\
                    h_{k1}^{m} & h_{k2}^{m} & \cdots & h_{kk}^{m} \\
                 \end{array}
               \right]\neq0.
\end{eqnarray}
Notice that $$\mathfrak{g}_i=f_i(\lambda_1,\cdots,\lambda_{m_0},0,\cdots)=\sum\limits_{j=1}^{N}h_{ij}^{m_0}y_j\in E_{(\lambda_1,\cdots,\lambda_{m_0},0,\cdots)}M.$$
We claim that $\mathfrak{g}_1,\cdots,\mathfrak{g}_{k}$ are linearly independent. Indeed,
if $$\mu_1\mathfrak{g}_1+\cdots+\mu_k\mathfrak{g}_k=0,$$ then
$$\mu_1\sum\limits_{j=1}^{N}h_{1j}^{m_0}y_j
+\cdots+\mu_k\sum\limits_{j=1}^{N}h_{kj}^{m_0}y_j=0,$$
thus, $$\sum\limits_{j=1}^{N}\sum\limits_{i=1}^{k}\mu_ih_{ij}^{m_0}y_j=0.$$
This implies that for all $1\leq j\leq N,$
$$\sum\limits_{i=1}^{k}\mu_ih_{ij}^{m_0}=0.$$ By (\ref{det}), we have $\mu_i=0$ for all $1\leq i\leq k.$
\end{proof}

By Lemma \ref{lem3.6} and Lemma \ref{Theorem3.6}, and the similar reasoning in the proof of Lemma \ref{KNf}, we have the following theorem.
\begin{thm}\label{comKH}Let $\mathcal{H}$ be a pure $\omega$-contractive Hilbert module of finite rank, then 
 $$
\begin{aligned}
K(\mathcal{H})&=\operatorname{rank} \mathcal{H}-fd(\mathscr{M}),
\end{aligned}
$$
where $\mathscr{M}=\operatorname{ker}L,$ which is a submodule of $H^2 \otimes R,$ and $L$ is defined in Theorem \ref{2.6}. In particular, if
 $\mathcal{M}\subseteq H^2\otimes \mathbb {C}^N$ is a submodule, then
$$K(\mathcal{M}^\bot)=N-fd(\mathcal{M}).$$
\end{thm}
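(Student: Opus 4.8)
The plan is to prove the general identity first and then specialize to $\mathcal{H}=\mathcal{M}^\perp$. For the general identity I would start from Lemma \ref{lem3.6}, which already gives, for every finite $m$,
$$K_m(\mathcal{H})=\operatorname{rank}\mathcal{H}-\sup\{\operatorname{dim}E_\lambda\mathscr{M}:\lambda\in\mathbb{B}_m\}.$$
Since $\mathcal{H}$ has finite rank, $R=\overline{\operatorname{ran}\Delta_\mathcal{H}}$ is finite-dimensional, so $\mathscr{M}=\operatorname{ker}L$ is a submodule of $H^2\otimes R\cong H^2\otimes\mathbb{C}^{\operatorname{rank}\mathcal{H}}$, and Lemma \ref{Theorem3.6} is applicable to it. Passing to the limit $m\to\infty$ and combining the definition $K(\mathcal{H})=\lim_m K_m(\mathcal{H})$ from \eqref{K(H)} with $\lim_m\sup\{\operatorname{dim}E_\lambda\mathscr{M}:\lambda\in\mathbb{B}_m\}=fd(\mathscr{M})$ then yields $K(\mathcal{H})=\operatorname{rank}\mathcal{H}-fd(\mathscr{M})$.

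For the special case $\mathcal{H}=\mathcal{M}^\perp$ with $\mathcal{M}\subseteq H^2\otimes\mathbb{C}^N$, I would follow the template of Lemma \ref{KNf}. Writing $n=\operatorname{dim}\,\mathcal{M}\cap(\mathbb{C}\otimes\mathbb{C}^N)$, I would first record the infinite-variable analogue of the defect computation of Lemma \ref{dimdefect}: using the Proposition of Section 3, namely that $\sum_{k=1}^m M_kM_k^*$ converges in the strong operator topology to $I-E_0$ with $E_0$ the projection onto the constants, the same telescoping identity gives
$$\Delta_{\mathcal{M}^\perp}^2=P_{\mathcal{M}^\perp}(E_0\otimes I)P_{\mathcal{M}^\perp}|_{\mathcal{M}^\perp}.$$
Exactly as in Lemma \ref{dimdefect}, this shows that for a suitable orthonormal basis $\{1\otimes e_i\}_{i=1}^N$ of $\mathbb{C}\otimes\mathbb{C}^N$ extending a basis of $\mathcal{M}\cap(\mathbb{C}\otimes\mathbb{C}^N)$, the vectors $\{P_{\mathcal{M}^\perp}(1\otimes e_i)\}_{i=n+1}^N$ form a basis of $\operatorname{ran}\Delta_{\mathcal{M}^\perp}$, so $\operatorname{rank}\mathcal{M}^\perp=N-n$.

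Next I would identify $\mathscr{M}=\operatorname{ker}L$ as in the proof of Lemma \ref{KNf}. Picking $f_i\in\mathcal{M}^\perp$ with $\Delta_{\mathcal{M}^\perp}^2 f_i=P_{\mathcal{M}^\perp}(1\otimes e_i)$ for $n+1\le i\le N$, I would form the linear isomorphism $A:\operatorname{ran}\Delta_{\mathcal{M}^\perp}\to\mathbb{C}^{N-n}$ determined by $A(\Delta_{\mathcal{M}^\perp}f_i)=e_i$ and the intertwiner $B=I\otimes A:H^2\otimes\operatorname{ran}\Delta_{\mathcal{M}^\perp}\to H^2\otimes\mathbb{C}^{N-n}$, then verify $L(B^{-1}g)=P_{\mathcal{M}^\perp}g$ on the dense set of finitely supported sums $g=\sum_{i=n+1}^N g_i\otimes e_i$. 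This identifies $\operatorname{ker}L=B^{-1}\mathcal{M}'$ with $\mathcal{M}'=\mathcal{M}\cap(H^2\otimes\mathbb{C}^{N-n})$; since $B$ is a fiberwise isomorphism, $\operatorname{dim}E_\lambda\operatorname{ker}L=\operatorname{dim}E_\lambda\mathcal{M}'$, and the orthogonal decomposition $\mathcal{M}=\mathcal{M}'\oplus(H^2\otimes\operatorname{span}\{e_1,\cdots,e_n\})$ gives $fd(\mathcal{M})=fd(\mathcal{M}')+n$, hence $fd(\mathscr{M})=fd(\mathcal{M})-n$. Substituting into the general identity yields $K(\mathcal{M}^\perp)=(N-n)-(fd(\mathcal{M})-n)=N-fd(\mathcal{M})$.

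The step I expect to demand the most care is the infinite-variable defect computation together with the density argument establishing $L(B^{-1}g)=P_{\mathcal{M}^\perp}g$: unlike the finitely-many-variables setting one must justify that $\sum_k M_kM_k^*$ converges in the strong operator topology before telescoping, and one must approximate arbitrary functions in $H^2$ by polynomials in order to commute the limit with the contraction $L$. Once these approximation issues are handled exactly as in Lemma \ref{KNf}, the remaining fiber-dimension bookkeeping transfers verbatim.
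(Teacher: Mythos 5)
Your proposal is correct and takes essentially the same route as the paper: the paper's proof consists precisely of combining Lemma \ref{lem3.6} with Lemma \ref{Theorem3.6} (applied to $\mathscr{M}=\operatorname{ker}L\subseteq H^2\otimes R$ with $R$ finite-dimensional) and passing to the limit in $m$, and then invoking ``the similar reasoning in the proof of Lemma \ref{KNf}'' for the case $\mathcal{H}=\mathcal{M}^\perp$, which is exactly the defect computation $\Delta_{\mathcal{M}^\perp}^2=P_{\mathcal{M}^\perp}(E_0\otimes I)P_{\mathcal{M}^\perp}|_{\mathcal{M}^\perp}$ and the identification $\operatorname{ker}L=B^{-1}\mathcal{M}'$, $fd(\mathscr{M})=fd(\mathcal{M})-n$ that you spell out. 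You merely make explicit the SOT-telescoping and polynomial-density details that the paper leaves implicit.
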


Now, let $$\phi_m(X)=\sum\limits_{k=1}^m T_k X T_k^*, ~X \in \mathcal{B}(\mathcal{H}).$$
For $m\geq 1,$ set
\begin{equation}
q_m(x)=\frac{(x+1)(x+2)\cdots(x+m)}{m!}.
\end{equation}
We have the following proposition, whose  proof  is inspired by \cite[Throrem C]{Arveson curvature}, and we put it in Appendix.
\begin{prop}\label{2.151}
Let $\mathcal{H}$ be an $\omega$-contractive Hilbert module of finite rank, then
$$
K_m(\mathcal{H})
=\lim _{n \rightarrow \infty} \frac{\operatorname{trace}\left(\sum\limits_{k=0}^n \phi_m^k\left(\Delta_\mathcal{H}^2\right)\right)}{q_m(n)}=\lim _{n \rightarrow \infty} \frac{\operatorname{trace}\left(\phi_m^n\left(\Delta_\mathcal{H}^2\right)\right)}{q_{m-1}(n)}.
$$
\end{prop}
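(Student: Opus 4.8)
The plan is to reduce both equalities to a single power series, produced from the spherical moment formula, and then to extract the two limits from the behaviour of that series as $r\to1$. Write $R=\operatorname{ran}\Delta_\mathcal{H}$, $N=\operatorname{rank}\mathcal{H}=\dim R$, and for $\xi\in\partial\mathbb{B}_m$ put $T(\xi)=\sum_{i=1}^m\bar\xi_i T_i$, so that $F(r\xi)=\Delta_\mathcal{H}(I-rT(\xi)^*)^{-1}(I-rT(\xi))^{-1}\Delta_\mathcal{H}$. Abbreviate $b_n=\operatorname{trace}(\phi_m^n(\Delta_\mathcal{H}^2))/q_{m-1}(n)$; the two asserted limits are then $\lim_n b_n$ and $\lim_n\big(\sum_{k=0}^n b_k q_{m-1}(k)\big)/q_m(n)$. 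Note that Lemma~\ref{thm5.9} and Lemma~\ref{eq11} are available under the sole hypothesis that $\mathcal{H}$ has finite rank, so no purity is needed.

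First I would prove the generating-function identity
$$g(r):=\int_{\partial\mathbb{B}_m}\operatorname{trace}F(r\xi)\,d\sigma_m=\sum_{n=0}^\infty b_n\,r^{2n},\qquad 0\le r<1.$$
Expanding the two resolvents as $(I-rT(\xi)^*)^{-1}(I-rT(\xi))^{-1}=\sum_{j,k\ge0}r^{j+k}T(\xi)^{*j}T(\xi)^{k}$ and using that $\Delta_\mathcal{H}^2$ is finite rank (so the double series of traces converges absolutely for $r<1$) gives $\operatorname{trace}F(r\xi)=\sum_{j,k}r^{j+k}\operatorname{trace}(\Delta_\mathcal{H}^2 T(\xi)^{*j}T(\xi)^{k})$. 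After integration the orthogonality relation $\int_{\partial\mathbb{B}_m}\xi^\alpha\bar\xi^\beta\,d\sigma_m=\delta_{\alpha\beta}\,\alpha!(m-1)!/(|\alpha|+m-1)!$ annihilates every term with $j\ne k$, while the diagonal terms are computed from the moment formula $\int_{\partial\mathbb{B}_m}T(\xi)^nXT(\xi)^{*n}\,d\sigma_m=\phi_m^n(X)/q_{m-1}(n)$, a multinomial calculation resting on $q_{m-1}(n)=\binom{n+m-1}{m-1}$. Next, Lemma~\ref{thm5.9} gives $(1-r^2)F(r\xi)=I-\Phi(r\xi)\Phi(r\xi)^*\le I$ on $R$, so $0\le(1-r^2)\operatorname{trace}F(r\xi)\le N$ uniformly in $r,\xi$; combined with Lemma~\ref{eq11}, dominated convergence passes the limit inside the integral,
$$K_m(\mathcal{H})=\int_{\partial\mathbb{B}_m}K_0^m\,d\sigma_m=\lim_{r\to1^-}(1-r^2)g(r)=\lim_{r\to1^-}(1-r^2)\sum_{n}b_n r^{2n},$$
exhibiting $K_m(\mathcal{H})$ as the Abel mean of $(b_n)$.

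The decisive step is to show that $(b_n)$ is non-increasing; being also bounded below by $0$, it then converges, and by Abel's theorem its limit equals the Abel mean $K_m(\mathcal{H})$ found above. Let $h(r)=(1-r^2)g(r)=\sum_k h_k r^{2k}$, so that $h_0=b_0$ and $h_k=b_k-b_{k-1}$ for $k\ge1$; thus it suffices to prove $h_k\le0$ for $k\ge1$. By Lemma~\ref{thm5.9},
$$h(r)=\int_{\partial\mathbb{B}_m}\operatorname{trace}\big(I-\Phi(r\xi)\Phi(r\xi)^*\big)\,d\sigma_m=N-\int_{\partial\mathbb{B}_m}\|\Phi(r\xi)\|_{HS}^2\,d\sigma_m.$$
Expanding the holomorphic function $\Phi(z)=\sum_\gamma z^\gamma A_\gamma$ (in the first $m$ variables, $A_\gamma\in\mathcal{B}(E,R)$) and integrating term by term, the same monomial orthogonality yields $\int_{\partial\mathbb{B}_m}\|\Phi(r\xi)\|_{HS}^2\,d\sigma_m=\sum_{k\ge0}w_k r^{2k}$ with $w_k=\sum_{|\gamma|=k}\frac{\gamma!(m-1)!}{(k+m-1)!}\operatorname{trace}(A_\gamma A_\gamma^*)\ge0$. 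Hence $h_k=-w_k\le0$ for every $k\ge1$, so $(b_n)$ is non-increasing; therefore $b_n\downarrow K_m(\mathcal{H})$, which is precisely the second identity.

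The first identity then follows from the second by averaging. Using the hockey-stick identity $\sum_{k=0}^n q_{m-1}(k)=q_m(n)$,
$$\frac{\operatorname{trace}\big(\sum_{k=0}^n\phi_m^k(\Delta_\mathcal{H}^2)\big)}{q_m(n)}=\frac{\sum_{k=0}^n b_k\,q_{m-1}(k)}{\sum_{k=0}^n q_{m-1}(k)}$$
is a weighted average of $b_0,\dots,b_n$ with positive weights of infinite total mass, so it converges to the same limit $K_m(\mathcal{H})$ as $b_k$ (a Toeplitz/regular-summability argument). The main obstacle I anticipate is the careful bookkeeping in the first step — the multinomial evaluation of the spherical moment and the justification, via finite rank, of the interchanges of trace, sum and integral; by contrast the crucial idea is the elementary observation that $(1-r^2)g(r)$ has non-positive Taylor coefficients past the constant term, which replaces an otherwise delicate Tauberian passage (from Abel mean to genuine limit) by a one-line monotonicity argument.
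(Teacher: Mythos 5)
Your proposal is correct, and it follows a genuinely different route from the paper's own proof. The paper's argument (in the Appendix) is operator-theoretic: it introduces the Hardy-space compression $A_m\zeta=b_mP_{H_m^2\otimes R}\Phi(1\otimes\zeta)$ of the module homomorphism $\Phi$ supplied by Lemma \ref{lem2.7}, proves $K_m(\mathcal{H})=\operatorname{rank}\mathcal{H}-\operatorname{trace}(A_mA_m^*)$ (Lemma \ref{lemma74}) and the identity $A_mA_m^*=d\Gamma_m\bigl(P_{H_m^2\otimes R}\Phi\Phi^*P_{H_m^2\otimes R}\bigr)$ (Lemma \ref{Ad}), and then invokes Arveson's trace-asymptotics result \cite[Theorem 3.10]{Arveson curvature} to express $\operatorname{trace}(A_mA_m^*)$ as $\operatorname{rank}\mathcal{H}\cdot\lim_n\operatorname{trace}(P\Phi\Phi^*PE_{n,m})/\operatorname{trace}(E_{n,m})$; the relation $L^*L+\Phi\Phi^*=I$ and the computation $LP_{H_m^2\otimes R}E_{n,m}P_{H_m^2\otimes R}L^*=\phi_m^n(\Delta_\mathcal{H}^2)$ then produce the second equality, and the first follows by the same Stolz--Ces\`aro/hockey-stick step you use. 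You bypass $A_m$, $d\Gamma_m$, Lemma \ref{lem2.7}, and above all the citation of Arveson's Theorem 3.10, which is the heavy analytic input in the paper: your spherical-moment computation correctly identifies the Taylor coefficients of $g(r)=\int_{\partial\mathbb{B}_m}\operatorname{trace}F(r\xi)\,d\sigma_m$ with the sequence $b_n$ (the multinomial identity $\binom{k}{\alpha}^2\alpha!=\binom{k}{\alpha}\,k!$ yields exactly the factor $1/q_{m-1}(k)$), and the factorization of Lemma \ref{thm5.9} shows $(1-r^2)g(r)=N-\sum_k w_kr^{2k}$ with $w_k\ge 0$, so $b_n$ is non-increasing, hence convergent, and Abel's theorem pins its limit to $K_m(\mathcal{H})$. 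This monotonicity observation is precisely what substitutes for the Tauberian content of the cited theorem, and it even gives a byproduct the paper does not state, namely that $n\mapsto\operatorname{trace}(\phi_m^n(\Delta_\mathcal{H}^2))/q_{m-1}(n)$ is non-increasing. What your route buys is self-containedness and elementarity (only Lemmas \ref{thm5.9} and \ref{eq11}, spherical moments, and classical summability facts are needed, with no purity hypothesis, as you correctly note); what the paper's route buys is reusable structure---$A_m$, $d\Gamma_m$, and Lemma \ref{Ad} mirror Arveson's finite-variable framework---with the hard analysis delegated to the cited theorem. The technical interchanges you flag (trace/sum/integral, term-by-term integration of the Taylor series of $\Phi$ restricted to $\mathbb{B}_m$) are all unproblematic because $\operatorname{ran}\Delta_\mathcal{H}$ is finite-dimensional and $\|T(\xi)\|\le 1$, so your outline stands as a complete proof.
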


Next, we define the asymptotic Euler characteristic of $\omega$-contractive Hilbert modules of finite rank, and establish its connection to the asymptotic curvature invariant. To do this, we need some preparation.
  Let
$$
\mathbb{M}_\mathcal{H}=\operatorname{span}\left\{f \cdot \Delta_\mathcal{H} \zeta : f \in \mathcal{P}_{\infty}, \zeta \in \mathcal{H}\right\},
$$
and
$$
\mathbb{M}_\mathcal{H}^m=\operatorname{span}\left\{f \cdot \Delta_\mathcal{H} \zeta : f \in \mathcal{P}_m, \zeta \in \mathcal{H}\right\}.
$$
Then $\mathbb{M}_\mathcal{H}(resp. \mathbb{M}_\mathcal{H}^m)$ is a finitely generated $\mathcal{P}_{\infty}(resp.\mathcal{P}_m)$-module.
By \cite[Lemma 10.1]{Rotman} and \cite[Proposition 2.11]{Atiyah},
 \begin{equation}\label{wuqiongyong}
  \chi_{m}(\mathcal{H})=\operatorname{rank} \mathbb{M}_\mathcal{H}^m.
 \end{equation}
Hence by Lemma \ref{lemF}, it is easy to see that $$\operatorname{rank} \mathbb{M}_\mathcal{H}^{m+1}\leq \operatorname{rank} \mathbb{M}_\mathcal{H}^{m}.$$
 Then we have the following lemma.

\begin{lem}\label{thm5.1}
 Let $\mathcal{H}$ be an $\omega$-contractive Hilbert module of finite rank,
then
$$
\chi_{m+1}(\mathcal{H}) \leq \chi_{m}(\mathcal{H}).
$$
\end{lem}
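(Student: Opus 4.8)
The plan is to prove Lemma \ref{thm5.1} by reducing it, via the identity \eqref{wuqiongyong}, to the monotonicity of the ranks of the finitely generated modules $\mathbb{M}_\mathcal{H}^m$. Since \eqref{wuqiongyong} gives $\chi_m(\mathcal{H})=\operatorname{rank}\mathbb{M}_\mathcal{H}^m$ and $\chi_{m+1}(\mathcal{H})=\operatorname{rank}\mathbb{M}_\mathcal{H}^{m+1}$, it suffices to establish
\begin{equation}\label{plan:rank}
\operatorname{rank}\mathbb{M}_\mathcal{H}^{m+1}\leq\operatorname{rank}\mathbb{M}_\mathcal{H}^{m},
\end{equation}
which is exactly the inequality already asserted in the excerpt just before the lemma. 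So the real content is to justify \eqref{plan:rank} carefully.

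First I would fix a common generating set. By definition, both $\mathbb{M}_\mathcal{H}^m$ and $\mathbb{M}_\mathcal{H}^{m+1}$ are generated, as modules over $\mathcal{P}_m$ and $\mathcal{P}_{m+1}$ respectively, by the same finite set $\mathfrak{S}=\{\Delta_\mathcal{H}\zeta_1,\dots,\Delta_\mathcal{H}\zeta_r\}$, where $\{\zeta_i\}_{i=1}^r$ is chosen so that $\{\Delta_\mathcal{H}\zeta_i\}$ spans $\operatorname{ran}\Delta_\mathcal{H}$ (here $r=\operatorname{rank}\mathcal{H}<\infty$, using that $\mathcal{H}$ has finite rank). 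The point is that $\mathbb{M}_\mathcal{H}^m\subseteq\mathbb{M}_\mathcal{H}^{m+1}$, with $\mathbb{M}_\mathcal{H}^m$ being the $\mathcal{P}_m$-submodule and $\mathbb{M}_\mathcal{H}^{m+1}$ the $\mathcal{P}_{m+1}$-submodule generated by the same $\mathfrak{S}$. By Lemma \ref{lemF}, $\operatorname{rank}\mathbb{M}_\mathcal{H}^m$ equals the size of a maximal subset of $\mathfrak{S}$ that is linearly independent over $\mathcal{P}_m$, and likewise $\operatorname{rank}\mathbb{M}_\mathcal{H}^{m+1}$ is the size of a maximal subset of $\mathfrak{S}$ linearly independent over $\mathcal{P}_{m+1}$.

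The key observation is then purely algebraic: any subset of $\mathfrak{S}$ that is linearly independent over the larger ring $\mathcal{P}_{m+1}$ is automatically linearly independent over the subring $\mathcal{P}_m$. Indeed, a $\mathcal{P}_m$-linear dependence relation $\sum_i p_i\,\Delta_\mathcal{H}\zeta_i=0$ with $p_i\in\mathcal{P}_m$ is in particular a $\mathcal{P}_{m+1}$-linear relation, so independence over $\mathcal{P}_{m+1}$ forces all $p_i=0$. Consequently a maximal $\mathcal{P}_{m+1}$-independent subset of $\mathfrak{S}$ is a $\mathcal{P}_m$-independent subset, hence its cardinality is at most that of a maximal $\mathcal{P}_m$-independent subset; this yields \eqref{plan:rank}. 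Combining \eqref{plan:rank} with \eqref{wuqiongyong} gives $\chi_{m+1}(\mathcal{H})\leq\chi_m(\mathcal{H})$.

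I do not anticipate a serious obstacle here, since the argument is essentially bookkeeping built on Lemma \ref{lemF} and the identity \eqref{wuqiongyong}; the only mild subtlety to state cleanly is that one must use the \emph{same} finite generating set $\mathfrak{S}$ for both modules and invoke the notion of rank as the size of a maximal independent subset of a fixed generating set (as furnished by Lemma \ref{lemF}), rather than comparing the two modules directly across different base rings. Once that is made explicit, the inclusion of rings $\mathcal{P}_m\subseteq\mathcal{P}_{m+1}$ does all the work.
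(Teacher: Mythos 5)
Your proposal is correct and follows essentially the same route as the paper: the paper deduces the lemma precisely from the identity $\chi_m(\mathcal{H})=\operatorname{rank}\mathbb{M}_\mathcal{H}^m$ in \eqref{wuqiongyong} together with Lemma \ref{lemF}, asserting $\operatorname{rank}\mathbb{M}_\mathcal{H}^{m+1}\leq\operatorname{rank}\mathbb{M}_\mathcal{H}^{m}$ immediately before the lemma. Your write-up merely makes explicit the bookkeeping the paper leaves implicit (a common finite generating set from $\operatorname{ran}\Delta_\mathcal{H}$ and the fact that $\mathcal{P}_{m+1}$-independence implies $\mathcal{P}_m$-independence), which is exactly the intended argument.
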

By Lemma \ref{thm5.1},
we define the asymptotic Euler characteristic of $\mathcal{H}$
\begin{equation}\label{cuocuocuo}
\chi(\mathcal{H})=\lim _{m \rightarrow \infty} \chi_{m}(\mathcal{H}) .
\end{equation}
\begin{rem}
It is not difficult to see that the Euler characteristic $\chi(\mathcal{H})$ corresponds to $\operatorname{rank} \mathbb{M}_\mathcal{H}$.
 In fact,
suppose that $\operatorname{dim} \operatorname{ran}\Delta_\mathcal{H}=\mathfrak{n}.$
Let $\left\{e_1, \cdots, e_\mathfrak{n}\right\}$
     be a linear basis of $\operatorname{ran}\Delta_\mathcal{H}.$ Without loss of generality, suppose that
$\left\{e_{1}, \cdots, e_{k}\right\}$
    is a maximal linearly independent set of $\left\{e_1, \cdots, e_\mathfrak{n}\right\}$ in the module $\mathbb{M}_{\mathcal{H}}.$ Obviously, there exists an $m_0> 0$ such that
    for $m>  m_0,$
$\left\{e_{1}, \cdots, e_{k}\right\}$
    is a maximal linearly independent set of $\left\{e_1, \cdots, e_\mathfrak{n}\right\}$ in the module $\mathbb{M}_{\mathcal{H}}^{m}.$
    Hence by Lemma \ref{lemF}, $$\chi(\mathcal{H}) =\lim _{m \rightarrow \infty} \chi_{m}(\mathcal{H})=\lim _{m\rightarrow \infty} \operatorname{rank} \mathbb{M}_\mathcal{H}^m=k =\operatorname{rank} \mathbb{M}_\mathcal{H}.$$
\end{rem}

Similar to \cite[Proposition 4.5]{Arveson curvature} and the corollary of \cite[Proposition 4.9]{Arveson curvature},
$
\left\{m ! \frac{\operatorname{dim}\mathbb{ M}_{\mathcal{H}}^{m,n}}{n^m}\right\}_{n=1}^{\infty}
$
converges and
\begin{equation}\label{zuihou1}
\chi_m(\mathcal{H})=m ! \lim _{n \rightarrow \infty} \frac{\operatorname{dim} \mathbb{M}_{\mathcal{H}}^{m,n}}{n^m},
\end{equation}
where $$\mathbb{M}_{\mathcal{H}}^{m,n}=\operatorname{span}\left\{f \cdot \Delta_\mathcal{H} \zeta : f \in \mathcal{P}_m, \operatorname{deg} f \leq n, \zeta \in \mathcal{H}\right\}.$$
Then we have the following proposition, whose proof will be
 put in Appendix.
\begin{prop}\label{3.21}
Let $\mathcal{H}$ be an $\omega$-contractive Hilbert module of finite rank,
then
$$
\operatorname{dim} \mathbb{M}_{\mathcal{H}}^{m,n}=\operatorname{rank}\left(\sum_{k=0}^n \phi_m^k\left(\Delta_\mathcal{H}^2\right)\right),
$$
and hence $$
\chi_m(\mathcal{H})=m ! \lim _{n \rightarrow \infty} \frac{\operatorname{rank}\left(\sum\limits_{k=0}^n \phi_m^k\left(\Delta_\mathcal{H}^2\right)\right)}{n^m}.
$$
\end{prop}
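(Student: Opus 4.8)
The plan is to reduce the identity $\dim\mathbb{M}_{\mathcal{H}}^{m,n}=\operatorname{rank}\big(\sum_{k=0}^n\phi_m^k(\Delta_\mathcal{H}^2)\big)$ to an elementary fact about ranges of operators of the form $\sum_j A_jA_j^*$, and then to read off the second formula from \eqref{zuihou1}. Throughout, write $\Delta=\Delta_\mathcal{H}$ for brevity; since $\mathcal{H}$ has finite rank, $\Delta$ is a finite-rank positive operator, so every operator appearing below has finite rank and no closure or convergence issues arise.

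First I would record the operator-theoretic lemma: for any finite family of bounded operators $\{A_j\}$, the positive operator $B=\sum_j A_jA_j^*$ satisfies $\ker B=\bigcap_j\ker A_j^*$, because $\langle Bx,x\rangle=\sum_j\|A_j^*x\|^2$ and positivity gives $Bx=0\iff\langle Bx,x\rangle=0$. Taking orthogonal complements yields $\overline{\operatorname{ran}B}=\overline{\operatorname{span}}\bigcup_j\operatorname{ran}A_j$, and since every operator in sight has finite rank, the rank equals the dimension of the (closed) range, so $\operatorname{rank}B=\dim\operatorname{span}\bigcup_j\operatorname{ran}A_j$.

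Next I would expand the summand. Since $\phi_m(X)=\sum_{i=1}^mT_iXT_i^*$ is a genuine finite sum, iterating and using $\Delta=\Delta^*$ gives
$$\phi_m^k(\Delta^2)=\sum_{i_1,\dots,i_k=1}^m (T_{i_1}\cdots T_{i_k}\Delta)(T_{i_1}\cdots T_{i_k}\Delta)^*.$$
Hence $B:=\sum_{k=0}^n\phi_m^k(\Delta^2)$ is exactly a finite sum of terms $AA^*$ with $A=T_{i_1}\cdots T_{i_k}\Delta$ ranging over $0\leq k\leq n$ and $1\leq i_j\leq m$ (the $k=0$ term being $\Delta^2$). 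Applying the lemma,
$$\operatorname{rank}B=\dim\operatorname{span}\{T_{i_1}\cdots T_{i_k}\Delta\zeta:0\leq k\leq n,\ 1\leq i_j\leq m,\ \zeta\in\mathcal{H}\}.$$
Because the $T_i$ commute, the words $T_{i_1}\cdots T_{i_k}$ with $1\leq i_j\leq m$ run over all monomials $T^\alpha$ with $\alpha\in\mathbb{N}^m$, $|\alpha|=k$; collecting over $k\leq n$ identifies the span on the right with $\operatorname{span}\{p(T_1,\dots,T_m)\Delta\zeta:\deg p\leq n,\ \zeta\in\mathcal{H}\}=\mathbb{M}_{\mathcal{H}}^{m,n}$. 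As $\operatorname{ran}\Delta$ is finite-dimensional and only finitely many $\alpha$ satisfy $|\alpha|\leq n$, this space is finite-dimensional, so its closure is itself; this is what makes the passage between $\overline{\operatorname{span}}$ and $\operatorname{span}$ legitimate and gives $\dim\mathbb{M}_{\mathcal{H}}^{m,n}=\operatorname{rank}B$.

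The only point requiring genuine care is the identification of $\overline{\operatorname{ran}B}$ with the span of ranges together with the rank-equals-dimension step; both rest on the finite-rank hypothesis on $\mathcal{H}$, which must be invoked explicitly. Granting the first displayed identity, the second is immediate: substituting $\dim\mathbb{M}_{\mathcal{H}}^{m,n}=\operatorname{rank}\big(\sum_{k=0}^n\phi_m^k(\Delta^2)\big)$ into \eqref{zuihou1} yields
$$\chi_m(\mathcal{H})=m!\lim_{n\to\infty}\frac{\operatorname{rank}\big(\sum_{k=0}^n\phi_m^k(\Delta^2)\big)}{n^m},$$
as claimed.
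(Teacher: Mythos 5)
Your proof is correct and is essentially the paper's own argument: the paper identifies $\mathbb{M}_{\mathcal{H}}^{m,n}$ with $\operatorname{span}\{T_{i_1}\cdots T_{i_k}\Delta_{\mathcal{H}}\zeta : 0\leq k\leq n,\ 1\leq i_j\leq m,\ \zeta\in\mathcal{H}\}$ and computes $\big(\mathbb{M}_{\mathcal{H}}^{m,n}\big)^{\perp}=\ker\big(\sum_{k=0}^{n}\phi_m^k(\Delta_{\mathcal{H}}^2)\big)$ via the identity $\sum_{k}\sum_{i_1,\dots,i_k}\|\Delta_{\mathcal{H}}T_{i_k}^*\cdots T_{i_1}^*\eta\|^2=\big\langle \sum_{k=0}^{n}\phi_m^k(\Delta_{\mathcal{H}}^2)\eta,\eta\big\rangle$, which is exactly the kernel--range duality that your lemma on operators of the form $\sum_j A_jA_j^*$ packages abstractly. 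Both arguments then finish identically by substituting the resulting rank identity into \eqref{zuihou1}.
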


Recall that $K_m(\mathcal{H})$ is decreasing if $\mathcal{H}$ is pure, and the asymptotic curvature invariant $K (\mathcal{H})=\lim \limits_{m \rightarrow \infty} K_{m}(\mathcal{H}),$
hence we
 have the following inequality.
\begin{prop}\label{KHXH} Let $\mathcal{H}$ be an $\omega$-contractive Hilbert module of finite rank, then
$$K_m(\mathcal{H}) \leq \chi_m(\mathcal{H}),
$$
In particular, if $\mathcal{H}$ is pure, $K(\mathcal{H})\leq\chi(\mathcal{H}).$
\end{prop}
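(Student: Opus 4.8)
The plan is to reduce both invariants to the asymptotics of the single family of positive finite-rank operators $A_n=\sum_{k=0}^{n}\phi_m^k(\Delta_\mathcal{H}^2)$ and then compare their traces with their ranks. First I would invoke Proposition \ref{2.151} and Proposition \ref{3.21}: the former gives $K_m(\mathcal{H})=\lim_{n\to\infty}\operatorname{trace}(A_n)/q_m(n)$, and the latter gives $\chi_m(\mathcal{H})=m!\lim_{n\to\infty}\operatorname{rank}(A_n)/n^m$. Since $q_m(n)=(n+1)(n+2)\cdots(n+m)/m!=n^m/m!+O(n^{m-1})$, the denominator $q_m(n)$ may be replaced by $n^m/m!$ inside the limit, so that $K_m(\mathcal{H})=m!\lim_{n\to\infty}\operatorname{trace}(A_n)/n^m$. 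Thus it suffices to compare $\operatorname{trace}(A_n)$ with $\operatorname{rank}(A_n)$, both of which are finite because $\Delta_\mathcal{H}$ has finite rank and hence so does each $A_n$.

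The key step, and the main point of the argument, is to show $0\le A_n\le I$ for every $n$, which I would obtain by a telescoping estimate. Because $\phi(I)\ge\phi_m(I)=\sum_{i=1}^{m}T_iT_i^*$, we have $\Delta_\mathcal{H}^2=I-\phi(I)\le I-\phi_m(I)$; applying the order-preserving completely positive map $\phi_m^k$ gives $\phi_m^k(\Delta_\mathcal{H}^2)\le\phi_m^k(I-\phi_m(I))=\phi_m^k(I)-\phi_m^{k+1}(I)$. Summing over $0\le k\le n$ telescopes to $A_n\le I-\phi_m^{n+1}(I)\le I$, as desired.

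With $0\le A_n\le I$ in hand, every nonzero eigenvalue of the finite-rank operator $A_n$ lies in $(0,1]$, so $\operatorname{trace}(A_n)\le\operatorname{rank}(A_n)$. Dividing by $n^m$ and letting $n\to\infty$ then yields $K_m(\mathcal{H})\le\chi_m(\mathcal{H})$. Finally, for the pure case I would note that $K_m$ is decreasing by Lemma \ref{lem3.6} and $\chi_m$ is decreasing by Lemma \ref{thm5.1}, so both asymptotic invariants exist as limits; passing to the limit in $K_m(\mathcal{H})\le\chi_m(\mathcal{H})$ gives $K(\mathcal{H})\le\chi(\mathcal{H})$. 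I expect the only subtle point to be the telescoping inequality $A_n\le I$, which crucially uses both the defect comparison $\Delta_\mathcal{H}^2\le I-\phi_m(I)$ and the positivity of $\phi_m$; everything else is routine asymptotics.
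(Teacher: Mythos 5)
Your proposal is correct and follows essentially the same route as the paper: both reduce the statement to the operator inequality $\sum_{k=0}^{n}\phi_m^k(\Delta_\mathcal{H}^2)\leq I-\phi_m^{n+1}(I)\leq I$ (via $\Delta_\mathcal{H}^2=I-\phi(I)\leq I-\phi_m(I)$ and telescoping), deduce $\operatorname{trace}\leq\operatorname{rank}$, and invoke Propositions \ref{2.151} and \ref{3.21} before passing to the limit in $m$ for the pure case. Your explicit remark that $q_m(n)\sim n^m/m!$ is a detail the paper leaves implicit, but it does not constitute a different argument.
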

\begin{proof}
Since $$\left(\sum\limits_{k=0}^n \phi_m^k\left(\Delta_\mathcal{H}^2\right)\right)=\left(\sum\limits_{k=0}^n \phi_m^k\left(I- \phi(I)\right)\right)
\leq \left(\sum\limits_{k=0}^n \phi_m^k\left(I- \phi_m(I)\right)\right)=I-\phi_m^{n+1}(I) \leq I,$$ we have
$$\operatorname{trace}\left(\sum_{k=0}^n \phi_m^k\left(\Delta_\mathcal{H}^2\right)\right)\leq \operatorname{rank}\left(\sum_{k=0}^n \phi_m^k\left(\Delta_\mathcal{H}^2\right)\right).$$
Then by Proposition \ref{2.151} and Proposition \ref{3.21}, $$K_m(\mathcal{H}) \leq \chi_m(\mathcal{H}).
$$
Furthermore, if $\mathcal{H}$ is pure, then by \eqref{K(H)} and \eqref{cuocuocuo}, $$K(\mathcal{H})=\lim \limits_{m \rightarrow \infty} K_{m}(\mathcal{H})\leq\lim \limits_{m \rightarrow \infty} \chi_{m}(\mathcal{H}) =\chi(\mathcal{H}).$$
\end{proof}


With the preparations outlined above, we can prove Theorem \ref{thm1.5}, where we generalize Arveson's version of the Gauss-Bonnet-Chern formula to the  infinitely-many-variables case. The proof is based on that of Theorem \ref{thm2.611}. At first, we restate the theorem as the following form.
\begin{thm}
If $\mathcal{M}\subseteq H^2 \otimes \mathbb{C}^N$ is a submodule,
then the
  following statements are equivalent.
\begin{itemize}
     \item[(1)] $K(\mathcal{M}^\bot)=\chi(\mathcal{M}^\bot)$;
     \item[(2)]
  there exists a nonempty open set $U\subseteq \operatorname{mp}(\mathcal{M})$ and polynomials $\{p_i\}_{i=1}^{\mathfrak{m}} \subseteq M$ such that
  $$E_{\lambda}\mathcal{M}=\operatorname{span}\{p_i(\lambda):1\leq i\leq {\mathfrak{m}}\}$$ for all $\lambda \in U;$
     \item[(3)]
     $\mathcal{M}$ is a locally algebraic submodule.
\end{itemize}
\end{thm}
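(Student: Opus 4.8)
The plan is to run the argument of Theorem \ref{thm2.611} essentially verbatim, once the three structural ingredients it relies upon have been re-established in the infinitely-many-variables setting. Those ingredients are: (i) that $\mathbb{M}_{\mathcal{M}^\bot}$ is generated, as a $\mathcal{P}_\infty$-module, by $\{P_{\mathcal{M}^\bot}(1\otimes e_i)\}_{i=1}^N$ for a suitable orthonormal basis $\{e_i\}$ of $\mathbb{C}^N$; (ii) the curvature formula $K(\mathcal{M}^\bot)=N-fd(\mathcal{M})$; and (iii) the identity $\chi(\mathcal{M}^\bot)=\operatorname{rank}\mathbb{M}_{\mathcal{M}^\bot}$ together with the inequality $K(\mathcal{M}^\bot)\leq\chi(\mathcal{M}^\bot)$. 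For (ii) we invoke Theorem \ref{comKH}; for (iii) we invoke the remark following Lemma \ref{thm5.1} (giving $\chi(\mathcal{H})=\operatorname{rank}\mathbb{M}_\mathcal{H}$) and Proposition \ref{KHXH}. Ingredient (i) follows by repeating the proof of Lemma \ref{dimdefect}: the only place the dimension $d$ entered there was through the identity $I-\sum_i M_{z_i}M_{z_i}^*=E_0\otimes I$, and the Section 3 proposition (that $\sum_{k=1}^m M_kM_k^*\to I-E_0$ in the strong operator topology) yields $\Delta_{\mathcal{M}^\bot}^2=P_{\mathcal{M}^\bot}(E_0\otimes I)P_{\mathcal{M}^\bot}|_{\mathcal{M}^\bot}$, after which the linear-algebra argument and the generation statement \eqref{(2.1)} carry over unchanged.

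With these in hand, (2)$\Rightarrow$(3) is immediate. For (3)$\Rightarrow$(1), set $l=fd(\mathcal{M})$ and choose $\lambda_0\in\operatorname{mp}(\mathcal{M})$ together with polynomials $\{p_i\}_{i=1}^l\subseteq\mathcal{M}$ spanning $E_{\lambda_0}\mathcal{M}$. Since each $p_i$ involves only finitely many variables, all of them lie in a single $\mathcal{P}_{m_0}$, which is an integral domain; writing $p_i=(p_{i,1},\dots,p_{i,N})$, the nonvanishing of the $l\times l$ determinant \eqref{eq:3} at $\lambda_0$ forces the polynomial determinant \eqref{eq:4} to be nonzero in $\mathcal{P}_{m_0}$. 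From here the cofactor computation of Theorem \ref{thm2.611}, split into the two cases $r_{i_01}=0$ and $r_{i_01}\neq0$, produces for every choice of $N-l+1$ indices $w_1<\cdots<w_{N-l+1}$ a nontrivial relation $\sum_i f_{w_i}\cdot P_{\mathcal{M}^\bot}(1\otimes e_{w_i})=0$ with coefficients $f_{w_i}\in\mathcal{P}_{m_0}\subseteq\mathcal{P}_\infty$; thus any such family is linearly dependent in the $\mathcal{P}_\infty$-module $\mathbb{M}_{\mathcal{M}^\bot}$, and Lemma \ref{lemF} gives $\operatorname{rank}\mathbb{M}_{\mathcal{M}^\bot}\leq N-l$. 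Combining with (ii) and (iii), $\chi(\mathcal{M}^\bot)=\operatorname{rank}\mathbb{M}_{\mathcal{M}^\bot}\leq N-l=K(\mathcal{M}^\bot)\leq\chi(\mathcal{M}^\bot)$, forcing equality.

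For (1)$\Rightarrow$(2), equality $K=\chi$ together with (ii) and (iii) gives $\operatorname{rank}\mathbb{M}_{\mathcal{M}^\bot}=N-l$, so by Lemma \ref{lemF} every maximal linearly independent subset of the generators has exactly $N-l$ elements. Fixing one such subset $\{P_{\mathcal{M}^\bot}(1\otimes e_{i_k})\}_{k=1}^{N-l}$ and letting $\{\alpha_1,\dots,\alpha_l\}$ be the complementary indices, maximality produces, for each $s$, a nonzero module relation whose $e_{\alpha_s}$-coefficient $p_{N-l+1,s}$ is a nonzero polynomial; choosing a common $m_0$ with all coefficients in $\mathcal{P}_{m_0}$, we set $U=\bigl(\bigcap_{s=1}^{l}Z(p_{N-l+1,s})^c\bigr)\cap\mathbb{B}$, which is nonempty and open because each $p_{N-l+1,s}$ depends only on the first $m_0$ coordinates. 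The elements $Q_s=\sum_j p_{j,s}\otimes e_{i_j}+p_{N-l+1,s}\otimes e_{\alpha_s}\in\mathcal{M}$ are then linearly independent at every $\lambda\in U$, exactly as in Theorem \ref{thm2.611}, whence $U\subseteq\operatorname{mp}(\mathcal{M})$ and $E_\lambda\mathcal{M}=\operatorname{span}\{Q_s(\lambda):1\leq s\leq l\}$ for all $\lambda\in U$.

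The only genuinely new point, and hence the main thing to verify carefully, is that passing from $\mathcal{P}_d$ to $\mathcal{P}_\infty$ introduces no obstruction: every algebraic manipulation in Theorem \ref{thm2.611} involves only the finitely many polynomials witnessing local algebraicity, each of which lives in a fixed $\mathcal{P}_{m_0}$, so all determinants, cofactors, and linear relations take place inside the genuine polynomial ring $\mathcal{P}_{m_0}$ and remain valid in $\mathcal{P}_\infty$. Consequently the entire proof reduces to confirming that ingredients (i)--(iii) survive in infinitely many variables, which they do by the references above; no estimate specific to finite $d$ is used in the algebraic core.
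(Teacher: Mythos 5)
Your proposal is correct and takes essentially the same route as the paper: the paper likewise re-establishes the generation statement analogous to \eqref{(2.1)}, invokes Theorem \ref{comKH} for $K(\mathcal{M}^\bot)=N-fd(\mathcal{M})$ and Proposition \ref{KHXH} for $K(\mathcal{M}^\bot)\leq\chi(\mathcal{M}^\bot)$, and then observes that all witnessing polynomials lie in a fixed $\mathcal{P}_{m_0}$ so that the determinant/cofactor argument of Theorem \ref{thm2.611} applies verbatim. The only cosmetic difference is that you bound $\operatorname{rank}\mathbb{M}_{\mathcal{M}^\bot}$ as a $\mathcal{P}_\infty$-module using the remark $\chi(\mathcal{H})=\operatorname{rank}\mathbb{M}_{\mathcal{H}}$, whereas the paper bounds $\chi_m(\mathcal{M}^\bot)=\operatorname{rank}\mathbb{M}_{\mathcal{M}^\bot}^m$ over $\mathcal{P}_m$ for $m>m_0$; by that same remark the two formulations are equivalent.
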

\begin{proof}
(2)$\Rightarrow$(3): Obviously.

(3)$\Rightarrow$(1):
For simplicity, set $l=fd(M).$ By the local algebraicity of $\mathcal{M},$ we can take $\lambda_{0}\in \operatorname{mp}(\mathcal{M})$ and polynomials $\{p_i\}_{i=1}^l \subseteq \mathcal{M}$ such that
 $$E_{\lambda_{0}}\mathcal{M}=\operatorname{span}\{p_i(\lambda_{0}):1\leq i\leq l\}.$$
Write $$p_i=(p_{i,1},\cdots,p_{i,N}).$$
Similar to the proof of Theorem \ref{thm2.611},
 there exist $1\leq i_1<\cdots<i_l\leq N$ such that the determinant
  \begin{eqnarray}\label{eq:41}
\left|
                 \begin{array}{cccc}
                   p_{1,i_1} & p_{1,i_2} & \cdots & p_{1,i_l} \\
                   p_{2,i_1} &p_{2,i_2} & \cdots &p_{2,i_l} \\
                   \vdots & \vdots & \ddots & \vdots \\
                    p_{l,i_1} & p_{l,i_2} & \cdots & p_{l,i_l}\\
                 \end{array}
               \right|\neq 0.
\end{eqnarray}
Since $$\{p_{i,k}:1\leq i\leq l, 1\leq k \leq N\}\subseteq \mathcal{P}_{\infty},$$ there exists an $m_0> 0$ such that $$\{p_{i,k}:1\leq i\leq l, 1\leq k \leq N\}\subseteq \mathcal{P}_{m_0}.$$
By Proposition \ref{KHXH} and Theorem \ref{comKH}, $$K(\mathcal{M}^\bot)\leq \chi(\mathcal{M}^\bot), \quad K(\mathcal{M}^\bot)= N-l.$$
By \eqref{wuqiongyong},
$$\chi_m(\mathcal{M}^\bot)=\text{rank}(\mathbb{M}_{\mathcal{M}^\bot}^m ).$$ Hence in order to prove $K(\mathcal{M}^\bot)=\chi(\mathcal{M}^\bot),$
it suffices to prove that for $m> m_0,$
 \begin{eqnarray}\label{suffices}
 \text{rank}(\mathbb{M}_{\mathcal{M}^\bot}^m) \leq N-l.
\end{eqnarray}
The same reasoning used in the proof of Theorem \ref{thm2.611} demonstrates that for any $1\leq w_1<\cdots<w_{N-l+1}\leq N,$  $\{{P_{\mathcal{M}^\bot}(1\otimes e_{w_i})}\}_{i=1}^{N-l+1}$
  is linearly dependent in the module $\mathbb{M}_{\mathcal{M}^\bot}^m$.
Similar to \eqref{(2.1)}, $\{{P_{\mathcal{M}^\bot}(1\otimes e_{i})}\}_{i=1}^{N}$ generates $\mathbb{M}_{\mathcal{M}^\bot}^m .$ Therefore by Lemma \ref{lemF}, we have
 $$\text{rank}(\mathbb{M}_{\mathcal{M}^\bot}^m )\leq N-l.$$

(1)$\Rightarrow$(2): Suppose that $K(\mathcal{M}^\bot)=\chi(\mathcal{M}^\bot).$ Set $l=fd(\mathcal{M}).$  Hence by Theorem \ref{comKH}, we have
$\chi(\mathcal{M}^\bot)=N-l.$ Then there exists an $m> 0,$ such that $\chi_m(\mathcal{M}^\bot)=N-l.$ By Lemma \ref{lemF}, each maximal linearly independent set of $\{{P_{\mathcal{M}^\bot}(1\otimes e_{i})}\}_{i=1}^{N}$ in the module $\mathbb{M}_{\mathcal{M}^\bot}^m $ has $N-l$ elements. A similar proof to that of Theorem \ref{thm2.611} implies that
 there exists a nonempty open set $U\subseteq \operatorname{mp}(\mathcal{M})$ and polynomials $\{p_i\}_{i=1}^{\mathfrak{m}} \subseteq \mathcal{M}$ such that for all $\lambda \in U,$
  $$E_{\lambda}\mathcal{M}=\operatorname{span}\{p_i(\lambda):1\leq i\leq {\mathfrak{m}}\}.$$
\end{proof}

Obviously, the asymptotic curvature invariant and the asymptotic Euler characteristic are both unitarily invariant. We list this as the following proposition.

\begin{prop}\label{thm5.2}
If $H$ and $H^{\prime}$ are unitarily equivalent; that is, there exists a unitary operator $U: H \rightarrow H ^{\prime}$, such that for all $f\in \mathcal{P}_{\infty},$ and all $\zeta\in H,$
$$
U(f \cdot \zeta)=f \cdot U(\zeta).
$$
Then $\chi(H)=\chi(H^{\prime})$. Moreover, if $H$ and $H^{\prime}$ are pure, then $K(H)=K(H^{\prime})$.
\end{prop}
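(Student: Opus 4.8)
The plan is to reduce the whole proposition to the single intertwining identity $T_i'=UT_iU^{*}$ and then observe that every ingredient in the definitions of $\chi$ and $K$ is covariant under conjugation by $U$. First I would apply the hypothesis $U(f\cdot\zeta)=f\cdot U\zeta$ to the coordinate polynomials $f=z_i$, obtaining $UT_i=T_i'U$, i.e. $T_i'=UT_iU^{*}$ for every $i$, where $\mathbb{T}=(T_i)$ and $\mathbb{T}'=(T_i')$ are the $\omega$-contractions defining the two module structures. Since conjugation by a unitary commutes with adjoints, finite sums and SOT-limits, this gives $\sum_{i=1}^{k}T_i'(T_i')^{*}=U\big(\sum_{i=1}^{k}T_iT_i^{*}\big)U^{*}$, and after passing to the strong limit, $\Delta_{\mathcal{H}'}^{2}=U\Delta_{\mathcal{H}}^{2}U^{*}$, hence $\Delta_{\mathcal{H}'}=U\Delta_{\mathcal{H}}U^{*}$ and $U\Delta_{\mathcal{H}}=\Delta_{\mathcal{H}'}U$. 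In particular $U$ carries $\operatorname{ran}\Delta_{\mathcal{H}}$ onto $\operatorname{ran}\Delta_{\mathcal{H}'}$, so $\operatorname{rank}\mathcal{H}=\operatorname{rank}\mathcal{H}'$, and it restricts to a unitary of $R=\overline{\operatorname{ran}\Delta_{\mathcal{H}}}$ onto $R'=\overline{\operatorname{ran}\Delta_{\mathcal{H}'}}$.

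For the Euler characteristic I would fix $m$ and show that $U$ restricts to a $\mathcal{P}_m$-module isomorphism $\mathbb{M}_{\mathcal{H}}^{m}\to\mathbb{M}_{\mathcal{H}'}^{m}$. Using $U\Delta_{\mathcal{H}}=\Delta_{\mathcal{H}'}U$ we compute, for $f\in\mathcal{P}_m$ and $\zeta\in\mathcal{H}$,
\[
U(f\cdot\Delta_{\mathcal{H}}\zeta)=f\cdot U\Delta_{\mathcal{H}}\zeta=f\cdot\Delta_{\mathcal{H}'}(U\zeta),
\]
so $U$ maps the generating set of $\mathbb{M}_{\mathcal{H}}^{m}$ onto that of $\mathbb{M}_{\mathcal{H}'}^{m}$ and intertwines the two $\mathcal{P}_m$-actions; being a linear bijection, it is a module isomorphism. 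Because the rank of a finitely generated $\mathcal{P}_m$-module is invariant under isomorphism, \eqref{wuqiongyong} gives $\chi_m(\mathcal{H})=\operatorname{rank}\mathbb{M}_{\mathcal{H}}^{m}=\operatorname{rank}\mathbb{M}_{\mathcal{H}'}^{m}=\chi_m(\mathcal{H}')$. Letting $m\to\infty$ and using \eqref{cuocuocuo} yields $\chi(\mathcal{H})=\chi(\mathcal{H}')$, with no purity needed.

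For the curvature invariant, under the purity hypothesis, I would push the same conjugation through the operator function $F$. From $T_i'=UT_iU^{*}$ one gets $T'(z)=UT(z)U^{*}$ for $z\in\mathbb{B}$, and since conjugation by $U$ commutes with adjoints and with inversion, $(I-T'(z))^{-1}=U(I-T(z))^{-1}U^{*}$ and likewise for the adjoint factor. Substituting these and $\Delta_{\mathcal{H}'}=U\Delta_{\mathcal{H}}U^{*}$ into the definition \eqref{F} collapses the inner $U^{*}U$ pairs and gives $F'(z)=UF(z)U^{*}$. As $\mathcal{H}$ has finite rank, $F(z)$ and $F'(z)$ act on the finite-dimensional spaces $R$ and $R'=UR$, so $\operatorname{trace}F'(z)=\operatorname{trace}\big(UF(z)U^{*}\big)=\operatorname{trace}F(z)$; hence the radial limits \eqref{mudi3} agree, $K_0^m$ is the same for both modules, and \eqref{fulu4.9} gives $K_m(\mathcal{H})=K_m(\mathcal{H}')$. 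Since purity is itself preserved by $U$ (because $(\phi')^{n}(I)=U\phi^{n}(I)U^{*}$), both sequences $K_m$ are decreasing, and passing to the limit in \eqref{K(H)} gives $K(\mathcal{H})=K(\mathcal{H}')$.

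I do not anticipate a genuine obstacle: the entire argument is unitary-conjugation bookkeeping, and the only points needing a line of justification are that conjugation by $U$ commutes with SOT-limits (used for $\Delta_{\mathcal{H}}$ and $T(z)$) and with operator inversion (used for the resolvents in $F$), together with the invariance of module rank under isomorphism and of trace under unitary conjugation. Should one wish to bypass the trace computation for $K$, an alternative is to note that $I\otimes(U|_{R})$ is a unitary module isomorphism carrying the dilation operator $L$ of Theorem \ref{2.6} to that of $\mathcal{H}'$ and hence $\ker L$ to $\ker L'$, so that Theorem \ref{comKH} directly gives $K(\mathcal{H})=\operatorname{rank}\mathcal{H}-fd(\ker L)=\operatorname{rank}\mathcal{H}'-fd(\ker L')=K(\mathcal{H}')$; the direct trace argument is, however, shorter and avoids tracking the auxiliary space.
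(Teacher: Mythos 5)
Your proof is correct. The paper gives no argument at all for this proposition (it is introduced with ``Obviously, the asymptotic curvature invariant and the asymptotic Euler characteristic are both unitarily invariant''), and your unitary-conjugation bookkeeping --- $T_i'=UT_iU^{*}$, hence $\Delta_{H'}=U\Delta_{H}U^{*}$, hence a $\mathcal{P}_m$-module isomorphism $\mathbb{M}_{H}^{m}\cong\mathbb{M}_{H'}^{m}$ giving $\chi_m(H)=\chi_m(H')$, and $F'(z)=UF(z)U^{*}$ giving $K_m(H)=K_m(H')$ --- is precisely the routine verification the authors suppress, with every point that actually needs justification (compatibility of conjugation with SOT-limits and inversion, invariance of trace under unitary conjugation, and invariance of the rank of a finitely generated module under isomorphism) correctly identified and handled.
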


\section{An {\color{red}example} of homogeneous submodule containing no nonzero polynomials}
 It is straightforward to verify that any homogeneous submodule of $H_d^2$
  is polynomially generated. Consequently, Arveson's version of the Gauss-Bonnet-Chern formula holds for homogeneous quotient modules in $H_d^2.$
In this section, we give an example of homogeneous submodule $\mathcal{M}$ of $H^2,$ which contains no nonzero polynomials.
Then by Theorem \ref{thm1.5} and Proposition \ref{KHXH}, $$K(\mathcal{M}^{\bot})<\chi(\mathcal{M}^{\bot}).$$

Let $$f=\sum\limits_{i=1}^{\infty} \frac{1}{(i+1)!} z_i,$$
To simplify the notation,  write $\lambda_i=\frac{1}{(i+1)!}.$
Consider the homogeneous submodule $\mathcal{M}_f$ in $H^2$ generated by $f.$ Obviously, $\mathcal{M}_f$ is homogeneous:
$$
\mathcal{M}_f=\mathop{\oplus}_{n=1}^{\infty} \mathcal{M}_{n},
$$
where
$$
 \mathcal{M}_{n}=\overline{\operatorname{span}\left\{z^{\alpha}f: |\alpha|=n-1\right\}}, \quad n \geq 1 .\\
$$

\begin{lem}\label{lem5.6}
$\mathcal{M}_{n}$ contains no nonzero ploynomials.
\end{lem}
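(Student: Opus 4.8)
The plan is to combine two facts: every element of $\mathcal{M}_n$ is an $H^2$-limit of functions divisible by $f$, and the evaluation functionals on $H^2$ are bounded. First I would record the elementary observation that a finite linear combination $\sum_{|\alpha|=n-1}c_\alpha z^\alpha f$ equals $q\cdot f$, where $q=\sum_{|\alpha|=n-1}c_\alpha z^\alpha$ is a homogeneous polynomial of degree $n-1$. Hence any $p\in\mathcal{M}_n$ is a limit in $H^2$ of a sequence $q_k f$ with $q_k$ homogeneous of degree $n-1$, and since the closed linear span of homogeneous functions of degree $n$ lies in the degree-$n$ homogeneous subspace, $p$ is itself homogeneous of degree $n$. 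Because each evaluation functional $E_z$ with $z\in\mathbb{B}$ is bounded (as established in Section 3), convergence in $H^2$ forces pointwise convergence, so for every $z\in\mathbb{B}$ one has $p(z)=\lim_k q_k(z)f(z)$. In particular, at any point where $f$ vanishes, $p$ must vanish as well.

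The heart of the argument will then be to produce, for a hypothetical nonzero $p\in\mathcal{M}_n$, a single point $z^{(0)}\in\mathbb{B}$ with $f(z^{(0)})=0$ but $p(z^{(0)})\neq0$, yielding the contradiction $0=\lim_k q_k(z^{(0)})f(z^{(0)})=p(z^{(0)})\neq0$. Since $p$ is a polynomial it involves only finitely many variables, say $z_1,\dots,z_N$. I would first choose a vector $w=(w_1,\dots,w_N,0,0,\dots)$ of small norm with $p(w)\neq0$; this is possible because $p$ is homogeneous of degree $n$, so if $p(u)\neq0$ for some $u$ then $p(tu)=t^n p(u)\neq0$ for all $t\neq0$, and rescaling makes $\|w\|$ as small as desired. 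I then append a tail supported on indices $i>N$ to cancel $f(w)=\sum_{i\le N}\lambda_i w_i$: for instance setting $z^{(0)}_{N+1}=-f(w)/\lambda_{N+1}$ and $z^{(0)}_i=w_i$ for $i\le N$ gives $f(z^{(0)})=0$, while $p(z^{(0)})=p(w)\neq0$ since $p$ does not depend on the coordinates beyond $N$.

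The last point to check will be that $z^{(0)}\in\mathbb{B}$. The appended coordinate has modulus $|f(w)|/\lambda_{N+1}$, and $f(w)$ scales linearly under rescaling of $w$, so by taking $\|w\|$ small enough one makes both $\|w\|$ and the tail contribution arbitrarily small, forcing $\sum_i|z^{(0)}_i|^2<1$. This furnishes the desired point and completes the proof. The only slightly delicate step is precisely this balancing act: one must stay inside the ball while killing $f$, which is possible exactly because $f$ genuinely involves infinitely many variables with nonzero coefficients and $(\lambda_i)\in\ell^2$. I expect this to be the main obstacle, though here it is mild; note that the specific values $\lambda_i=\tfrac{1}{(i+1)!}$ play no role beyond ensuring $f\in H^2$ and $\lambda_{N+1}\neq0$.
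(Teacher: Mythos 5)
Your proof is correct, and it takes a genuinely different route from the paper's. The paper argues quantitatively: given a homogeneous polynomial $q$ of degree $n$ in $\mathcal{P}_{d_0}$, it splits an arbitrary competitor $p\cdot f$ via $p=P_{d_0}p+(I-P_{d_0})p$ and $f=f_1+f_2$ (the first $d_0$ variables versus the tail), establishes orthogonality relations among the cross terms, and through a case analysis on the sizes of the pieces of $p$ derives a uniform lower bound $\|q-p\cdot f\|^2\geq \frac{\|q\|^2}{4d_0^3(d_0+1)!\,(d_0+4)!^2}$, so that the distance from $q$ to $\mathcal{M}_n$ is strictly positive unless $q=0$. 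You instead argue qualitatively: since $H^2$-convergence implies pointwise convergence on $\mathbb{B}$ (bounded evaluations), every element of $\mathcal{M}_n$ --- indeed of the whole submodule $\mathcal{M}_f$ --- vanishes on $Z(f)\cap\mathbb{B}$, and a nonzero polynomial $p$, involving only $z_1,\dots,z_N$, cannot vanish there because the fresh coordinate $z_{N+1}$ (with $\lambda_{N+1}\neq 0$) can be used to cancel $f$ at a point of arbitrarily small norm where $p\neq 0$; the linearity of $f$ makes the rescaling-into-the-ball step work. This is exactly the point where your argument exploits that $f$ has nonzero coefficients in infinitely many variables, and where it would rightly break down for a polynomial generator. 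Your approach is shorter and buys more generality: it applies verbatim to $\mathcal{M}_f$ itself, so the Proposition following the lemma becomes immediate without the grading/projection step, and it works for any linear generator $\sum_i\lambda_i z_i\in H^2$ with infinitely many nonzero coefficients. What the paper's computation buys in exchange is quantitative information: an explicit positive lower bound on the distance from a given polynomial to the submodule, rather than mere non-membership.
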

\begin{proof}
 Let $H_n$ be the space of homogeneous functions in $H^2$ of degree $n.$  Given a ploynomial $q\in H_n \cap \mathcal{P}_{\infty},$ then there exists a $d_0> n$ such that $$q\in \mathcal{P}_{d_0}\cap H_n.$$ We claim that for all $p\in \mathcal{P}_\infty\cap H_{n-1},$ $$||q-p \cdot f||^2\geq  \frac{||q||^2}{4d_0^3(d_0+1)! (d_0+4)!^2}.$$
Indeed, let
 $P_{d_0}$ be the orthogonal projection from $H^2$ onto $H_{d_0}^2.$
Write
$$p=(I-P_{d_0})p +P_{d_0}p.$$
For simplicity, set $$ p_1=(I-P_{d_0})\,p,\quad p_2=P_{d_0}\,p,$$ and
$$ f_1=\sum \limits _{i=1}^{d_0}\lambda_i z_i, \quad f_2=\sum \limits _{i=d_0+1}^{\infty}\lambda_i z_i . $$
Since for $|\alpha|=n-2, ~i,j> d_0,~l>d_0~ \text{and} ~z^\beta \in \mathcal{P}_{d_0},$ $\langle z_iz^\beta,z_j z_lz^\alpha\rangle=0;$ therefore,
 $$\langle  p_2 f_2,p_1 f_2\rangle=\left\langle  p_2 \sum \limits _{i=d_0+1}^{\infty}\lambda_i z_i,p_1 \sum \limits _{i=d_0+1}^{\infty}\lambda_i z_i\right\rangle
 =\sum \limits _{i=d_0+1}^{\infty}  \sum \limits _{j=d_0+1}^{\infty}\lambda_i \overline{\lambda_j} \langle  p_2 z_i,p_1 z_j\rangle=0.$$
By direct calculations, $$\langle q,p_1 f_2\rangle=\langle  p_2 f_1,p_1 f_2\rangle=\langle  p_1 f_1,p_1 f_2\rangle=\langle q,  p_2 f_2\rangle
 =\langle  p_2 f_1,  p_2 f_2\rangle=\langle q,p_1 f_1\rangle=\langle p_2 f_1,p_1 f_1\rangle=0.$$
It follows that
$$\langle q- p_2 f_1,p_1 f_2\rangle=\langle  p_2 f_2+p_1 f_1,p_1 f_2\rangle=\langle q- p_2 f_1,  p_2 f_2+p_1 f_1\rangle=0.$$
Then
$$||q-p  f||^2=||q- p_2 f_1- p_2 f_2-p_1 f_1-p_1f_2||^2 =||q- p_2 f_1||^2+|| p_2 f_2+p_1 f_1||^2+||p_1 f_2||^2.$$
Next, we will estimate $dist(q,\mathcal{M}_{n})$ in two cases.\\
 $Case ~1.$ $|| p_2|| \leq\frac{1}{2}\frac{||q||}{\sum \limits_{i=1}^{d_0} \lambda_i}.$

 Notice that
 $\left\| p_2 f_1\right\| \leq \sum \limits_{i=1}^{d_0} \lambda_i\| p_2\|,$ then
$$||q-p  f||\geq \left\|q- p_2 f_1\right\| \geq\|q\|-\left\| p_2 f_1\right\|\geq \|q\|-\sum \limits_{i=1}^{d_0} \lambda_i\| p_2\|\geq \frac{\|q\|}{2}.$$
$Case ~2.$ {\color{red}$|| p_2|| > \frac{1}{2}\frac{||q||}{\sum \limits_{i=1}^{d_0} \lambda_i}.$}

  For $|\gamma|=n-1$ and $z^\gamma \in \mathcal{P}_{d_0},$ $\left\| z^\gamma z_i\right\|^2\geq\frac{\| z^\gamma\|^2}{n}, i\geq d_0 +1.$
Then $$|| p_2 f_2||^2=\left\|\sum \limits_{i=d_0+ 1}^{\infty}  p_2 \lambda_i z_i\right\|^2=\sum \limits_{i=d_0 +1}^{\infty} \lambda_i\left\| p_2 z_i\right\|^2\geq\sum \limits_{i=d_0+1}^{\infty} \lambda_i\frac{\| p_2\|^2}{n}\geq\frac{\|q\|^2}{\left(\sum \limits_{i=1}^{d_0} \lambda_i\right)^2}\sum \limits_{i=d_0+1}^{\infty}\frac{ \lambda_i}{4n}.$$
Set $$h^2=\frac{\|q\|^2}{\left(\sum \limits_{i=1}^{d_0} \lambda_i\right)^2}\sum \limits_{i=d_0+1}^{\infty}\frac{ \lambda_i}{4n}.$$
 $Case ~2.1.$ $||p_1||\leq\frac{h}{2||M_{f_1}||}.$  $$||q-p  f||\geq||p_2 f_2+p_1 f_1||\geq(||p_2 f_2||-||p_1 f_1||)\geq\frac{h}{2}.$$
 $Case ~2.2.$ $||p_1|| > \frac{h}{2||M_{f_1}||}.$

 Notice that $$\left\| z^{\gamma^{\prime}} z_{d_0+1}\right\|^2\geq\frac{\| z^{\gamma^{\prime}}\|^2}{n}~ \text{for}~ |\gamma^{\prime}|=n-1, z^{\gamma^{\prime}} \perp \mathcal{P}_{d_0},$$
$$\frac{\lambda_{d_0+1}}{d_0}-\sum_{i=d_0+2}^{\infty} \lambda_i=
\frac{1}{d_0(d_0+1)!}-\sum_{i=d_0+2}^{\infty} \frac{1}{(i+1)!}\geq\frac{2}{(d_0+3)!}>0,$$
and
$$||M_{f_1}||\leq\sum_{i=1}^{d_0} \lambda_i\leq d_0+4 ,$$
  so
$$
\begin{aligned}
||q-p  f||& \geq \left\|p_1 f_2\right\| \\&=\left\|\sum_{i=d_0+1}^{\infty} \lambda_iz_i p_1\right\|
 \\&\geqslant\left\|\lambda_{d_0+1} z_{d_0+1} p_1\right\|-\sum_{i=d_0+2}^{\infty} ||\lambda_i z_i p_1 \| \\
& \geq\lambda_{d_0+1}\frac{\left\|p_1\right\|}{\sqrt{n}}-\sum_{i=d_0+2}^{\infty} \lambda_i\left\|p_1\right\|
\\
& \geq\lambda_{d_0+1}\frac{\left\|p_1\right\|}{d_0}-\sum_{i=d_0+2}^{\infty} \lambda_i\left\|p_1\right\|\\&
=\left\|p_1\right\|\left(\frac{\lambda_{d_0+1}}{d_0}-\sum_{i=d_0+2}^{\infty} \lambda_i\right)\\
&  \geq \frac{h}{2||M_{f_1}||}\left(\frac{\lambda_{d_0+1}}{d_0}-\sum_{i=d_0+2}^{\infty} \lambda_i\right)\\
&  \geq \frac{h}{(d_0+4)!}.
\end{aligned}
$$
We conclude $$||q-p  f||^2\geq min \left\{\frac{\|q\|^2}{4}, \frac{h^2}{4}, \frac{h^2}{(d_0+4)!^2}\right\}= min \left\{\frac{\|q\|^2}{4}, \frac{h^2}{(d_0+4)!^2}\right\}.$$
By the definition of $h^2,$ it is easy to see that $$h^2\geq \frac{\|q\|^2}{4d_0^3(d_0+1)!}.$$
Then $$||q-p  f||^2 \geq \frac{||q||^2}{4d_0^3(d_0+1)! (d_0+4)!^2}.$$
Therefore, $\mathcal{M}_{n}$ contains no nonzero polynomials.

\end{proof}
\begin{prop}
The homogeneous submodule $\mathcal{M}_f$ contains no nonzero ploynomials.
\end{prop}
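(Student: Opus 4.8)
The plan is to reduce the proposition to Lemma \ref{lem5.6} by exploiting the graded structure of $\mathcal{M}_f$. The decisive structural fact is that $\mathcal{M}_f=\bigoplus_{n=1}^{\infty}\mathcal{M}_n$ is an \emph{orthogonal} direct sum in which each summand $\mathcal{M}_n=\overline{\operatorname{span}\{z^{\alpha}f:|\alpha|=n-1\}}$ consists of homogeneous elements of degree $n$, since $f$ is homogeneous of degree $1$. As the homogeneous subspaces $H_n\subseteq H^2$ are mutually orthogonal, the graded decomposition of $\mathcal{M}_f$ is compatible with the homogeneous decomposition of $H^2$, and this compatibility is exactly what lets a polynomial membership statement be checked degree by degree.

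First I would take an arbitrary polynomial $q\in\mathcal{M}_f$ and split it into its homogeneous components $q=\sum_{n}q_n$, where $q_n$ is the degree-$n$ homogeneous part of $q$. This is a finite sum, and each $q_n$ is itself a polynomial (a finite linear combination of monomials of degree $n$).

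Next I would show that each nonzero component $q_n$ actually lies in $\mathcal{M}_n$. Writing $q=\sum_{m}g_m$ with $g_m\in\mathcal{M}_m\subseteq H_m$ according to the decomposition $\mathcal{M}_f=\bigoplus_m\mathcal{M}_m$, and comparing this with the homogeneous expansion $q=\sum_n q_n$, the mutual orthogonality of the $H_m$ forces $q_n=g_n\in\mathcal{M}_n$ for every $n$. Equivalently, $\mathcal{M}_f\cap H_n=\mathcal{M}_n$. Thus each $q_n$ is a polynomial belonging to $\mathcal{M}_n$.

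Finally, by Lemma \ref{lem5.6}, $\mathcal{M}_n$ contains no nonzero polynomial, so $q_n=0$ for every $n$, whence $q=0$. The entire difficulty of the proposition is already absorbed into Lemma \ref{lem5.6}, whose distance estimate $\|q-p\cdot f\|^{2}\geq \|q\|^{2}/\bigl(4d_0^{3}(d_0+1)!\,(d_0+4)!^{2}\bigr)$ does the real work; the step proposed here is the routine passage from the homogeneous pieces to the whole graded submodule and presents no genuine obstacle.
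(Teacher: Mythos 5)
Your proposal is correct and is essentially the paper's own argument: the paper likewise decomposes a polynomial $p\in\mathcal{M}_f$ along $\mathcal{M}_f=\bigoplus_n\mathcal{M}_n$, uses orthogonality of the $H_n$ to identify each component with $E_n p$ (hence a polynomial in $\mathcal{M}_n$), and then invokes Lemma \ref{lem5.6} to conclude each component vanishes. The only cosmetic difference is that you phrase the identification as $\mathcal{M}_f\cap H_n=\mathcal{M}_n$, while the paper writes the components directly as $g_k=E_k p$; the substance is identical.
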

\begin{proof}
Suppose that there is a ploynomial $p$ such that $p\in\mathcal{M}_f,$ then there is an $n \in \mathbb{Z}^+$ such that
$$
p=\sum_{i=1}^n g_i, \quad g_i \in \mathcal{M}_i,
$$
It follows that for all $1\leq k\leq n,$ $
g_k=E_k p
$
are ploynomials,
where $E_k$ denotes the orthogonal projection from $H^2$ onto $H_k.$
By Lemma \ref{lem5.6}, $g_k=0.$ This implies that $p=0.$
\end{proof}

\section{The finite defect problem of submodules in $H^2$}

The present section is devoted to the finite defect problem of submodule in $H^2.$ The main result is the following theorem.
\begin{thm}\label{thm6.3}
If $\mathcal{M}$ is a nonzero submodule of $H^2$ of finite rank, then $\mathcal{M}=H^2.$
\end{thm}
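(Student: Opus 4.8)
The plan is to reduce the statement to a boundary computation for a \emph{finite} family of multipliers. Assume $\mathcal{M}\neq\{0\}$ and, seeking a contradiction, that $\mathcal{M}\neq H^2$. Write $r=\operatorname{rank}\mathcal{M}=\dim\operatorname{ran}\Delta_{\mathcal{M}}<\infty$ and $R=\operatorname{ran}\Delta_{\mathcal{M}}\cong\mathbb{C}^{r}$. Since $\mathcal{M}$ is a submodule of the pure module $H^2$, it is itself a pure $\omega$-contractive Hilbert module, so Theorem \ref{2.6} furnishes a coisometry $L:H^2\otimes R\to\mathcal{M}$ with $LL^{*}=I_{\mathcal{M}}$ intertwining $M_i\otimes I$ with $M_i|_{\mathcal{M}}$. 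Composing with the inclusion $\iota:\mathcal{M}\hookrightarrow H^2$ gives a bounded module homomorphism $\iota L:H^2\otimes\mathbb{C}^{r}\to H^2$, which, exactly as in the construction preceding Proposition \ref{KH}, is multiplication $M_\Psi$ by a row of multipliers $\Psi=(\psi_1,\dots,\psi_r)$. Because $\iota\iota^{*}=P_{\mathcal{M}}$ and $LL^{*}=I_{\mathcal{M}}$, this yields the key identity
\[
M_\Psi M_\Psi^{*}=\iota LL^{*}\iota^{*}=P_{\mathcal{M}}.
\]

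First I would extract the boundary behaviour of $\Psi$. Viewing $\mathcal{M}$ as the submodule with $P_{\mathcal{M}}=\Phi\Phi^{*}$ for $\Phi=M_\Psi$ and target $H^2=H^2\otimes\mathbb{C}$, Lemma \ref{rank} and Proposition \ref{KH} show that for each $m$ the boundary value $\Psi(z^{(m)})\in\mathcal{B}(\mathbb{C}^{r},\mathbb{C})$ is, for $\sigma_m$-a.e.\ $z^{(m)}\in\partial\mathbb{B}_m$, a partial isometry of rank $\sup_{\lambda}\operatorname{rank}\Psi(\lambda)$. Since $\mathcal{M}\neq\{0\}$ forces some $\psi_j\not\equiv0$, this supremum equals $1$, so $\Psi(z^{(m)})$ is a rank-one partial isometry a.e., i.e.
\[
\sum_{j=1}^{r}\bigl|\psi_j(z^{(m)})\bigr|^{2}=1\qquad\text{for }\sigma_m\text{-a.e. }z^{(m)}\in\partial\mathbb{B}_m,
\]
for all sufficiently large $m$. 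Integrating this identity over $\partial\mathbb{B}_m$ gives $\sum_{j}\int_{\partial\mathbb{B}_m}|\psi_j|^{2}\,d\sigma_m=1$ for every such $m$.

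The decisive step is to let $m\to\infty$. Expanding $\psi_j=\sum_{\alpha}c^{(j)}_\alpha z^{\alpha}$ in the orthogonal monomial basis and using $\int_{\partial\mathbb{B}_m}|z^{\alpha}|^{2}\,d\sigma_m=\frac{(m-1)!\,\alpha!}{(m-1+|\alpha|)!}$ (which is $1$ for $\alpha=0$, is bounded by $\|z^{\alpha}\|_{H^2}^{2}$, and tends to $0$ for $|\alpha|\geq1$), dominated convergence against the summable weights $\|z^{\alpha}\|_{H^2}^{2}$ (note $\psi_j\in H^2$ since it is a multiplier) gives
\[
\lim_{m\to\infty}\int_{\partial\mathbb{B}_m}|\psi_j|^{2}\,d\sigma_m=\bigl|c^{(j)}_{0}\bigr|^{2}=|\psi_j(0)|^{2}.
\]
Hence $\sum_{j}|\psi_j(0)|^{2}=1$. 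On the other hand, since $M_{\psi_j}^{*}1=\overline{\psi_j(0)}\,1$, we have $\sum_{j}|\psi_j(0)|^{2}=\langle M_\Psi M_\Psi^{*}1,1\rangle=\langle P_{\mathcal{M}}1,1\rangle=\|P_{\mathcal{M}}1\|^{2}$; and because $1$ is cyclic for $H^2$, the assumption $\mathcal{M}\neq H^2$ gives $1\notin\mathcal{M}$, so $\|P_{\mathcal{M}}1\|^{2}<1$. This contradiction forces $\mathcal{M}=H^2$.

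I expect the main obstacle to be the two analytic points that must be justified carefully: that the module homomorphism $\iota L$ is genuinely realized by a bounded multiplier row $\Psi$ to which Proposition \ref{KH} applies, with the identification $\mathfrak{m}_m=1$; and the interchange of limit and integral producing $\sum_{j}|\psi_j(0)|^{2}=1$. The latter is the conceptual heart: it is precisely the passage from ``$\Psi$ is inner on every finite slice $\partial\mathbb{B}_m$'' to a constraint at the single point $0$, and this is the phenomenon that distinguishes infinitely many variables from the finite-variable setting, where finite inner families such as $(z_1,\dots,z_d)$ do produce proper finite-rank submodules (the maximal ideal at $0$). In the infinite-variable case no finite such family can reproduce the full defect, which is exactly what the collapse $\sum_j\int_{\partial\mathbb{B}_m}|\psi_j|^2\,d\sigma_m\to\sum_j|\psi_j(0)|^2$ quantifies.
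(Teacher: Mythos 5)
Your proof is correct, and it takes a genuinely different route from the paper's. The paper's argument is elementary and self-contained: it never factors $P_{\mathcal{M}}$ through multipliers. It uses the defect formula of Lemma \ref{lem6.1}, a pigeonhole argument exploiting finite rank to produce elements $g_t\in\mathcal{M}$ with $M_k^*g_t\in\mathcal{M}$ and thereby prove $P_{\mathcal{M}}1\neq0$ (Lemma \ref{prop6.5}), and a rigidity lemma (Lemma \ref{108}) asserting that a homogeneous $g$ for which the contractive inequality $\bigl\|\sum_i\lambda_iM_ig\bigr\|^2\leq\sum_i|\lambda_i|^2\|g\|^2$ becomes an equality along infinitely many disjoint blocks of variables must vanish; combining these, the paper shows the non-constant part of $P_{\mathcal{M}}1$ is zero and concludes $P_{\mathcal{M}}1=1$. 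You reach the same pivot ($1\in\mathcal{M}$, hence $\mathcal{M}=H^2$) analytically: Theorem \ref{2.6} applied to $\mathcal{M}$ itself yields $P_{\mathcal{M}}=M_\Psi M_\Psi^*$ with a \emph{finite} row $\Psi=(\psi_1,\dots,\psi_r)$ --- this is precisely where the finite-rank hypothesis enters --- then the inner-multiplier theory (Lemma \ref{rank}, Proposition \ref{KH}) makes $\Psi$ inner on every slice sphere $\partial\mathbb{B}_m$ for large $m$, and the decay $\int_{\partial\mathbb{B}_m}|z^\alpha|^2\,d\sigma_m=\frac{(m-1)!\,\alpha!}{(m-1+|\alpha|)!}=\binom{m-1+|\alpha|}{|\alpha|}^{-1}\|z^\alpha\|_{H^2}^2\leq\frac{1}{m}\|z^\alpha\|_{H^2}^2$ for $|\alpha|\geq1$ collapses the sphere integrals to the value at $0$, forcing $\|P_{\mathcal{M}}1\|^2=\sum_j|\psi_j(0)|^2=1$. (In fact this uniform $1/m$ bound makes your dominated-convergence step even easier, since the tail is at most $\|\psi_j\|_{H^2}^2/m$.) Your route is shorter given the machinery of Sections 3--4, and it isolates the infinite-variable phenomenon cleanly --- your closing remark about $(z_1,\dots,z_d)$ and the maximal ideal at $0$ is exactly the right contrast with Guo's finite-variable theorem. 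What the paper's route buys in exchange is independence from the GRS-type results (Lemmas \ref{Klambda}, \ref{f}, \ref{rank} and Proposition \ref{KH}), which the paper states without proof; your argument leans on them essentially.

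Two assertions in your proposal deserve explicit justification, though both are true and easily supplied. First, purity of $\mathcal{M}$: writing $T_i=M_i|_{\mathcal{M}}$, one has $(T_{i_1}\cdots T_{i_n})^*\zeta=P_{\mathcal{M}}M_{i_n}^*\cdots M_{i_1}^*\zeta$ for $\zeta\in\mathcal{M}$, hence $\langle\phi_{\mathcal{M}}^n(I_{\mathcal{M}})\zeta,\zeta\rangle=\sum_{i_1,\dots,i_n}\|P_{\mathcal{M}}M_{i_n}^*\cdots M_{i_1}^*\zeta\|^2\leq\langle\phi_{H^2}^n(I)\zeta,\zeta\rangle\to0$, so submodules of the pure module $H^2$ are indeed pure; without this, $LL^*=I_{\mathcal{M}}$ and hence $M_\Psi M_\Psi^*=P_{\mathcal{M}}$ fail. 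Second, the paper states Lemma \ref{rank} and Proposition \ref{KH} for the particular homomorphism $\Phi$ constructed before Lemma \ref{Klambda}, whereas you apply them to your own factorization $M_\Psi=\iota L$; this is legitimate because those proofs use only that $\Phi$ is a contractive module homomorphism with $\Phi\Phi^*=P_{\mathscr{M}}$ (which gives $\Phi^*(\mathcal{K}_\lambda\otimes x)=\mathcal{K}_\lambda\otimes\Phi(\lambda)^*x$ and $\|\Phi(\lambda)\|\leq1$), properties your $M_\Psi$ satisfies, but the mismatch should be acknowledged and the a.e.\ identity $\sum_j|\psi_j(z^{(m)})|^2=1$ derived for your $\Phi$ explicitly.
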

To prove the theorem, we need the following lemmas. The proof of Lemma \ref{lem6.1} can be obtained through direct computation; therefore, we omit this proof.
\begin{lem}\label{lem6.1} Let $\mathcal{M}$ be a nonzero submodule of $H^2,$ then $$\Delta_\mathcal{M}^2 =
 \left(P_\mathcal{M}E_0P_\mathcal{M} + \sum \limits_{i=1}^{\infty} [ M_i, P_{\mathcal{M}^\bot}][P_{\mathcal{M}^\bot}, M_i^*]\right) \bigg|_\mathcal{M} .$$
\end{lem}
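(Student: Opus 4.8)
The plan is to unwind the definition of the defect operator for the submodule $\mathcal{M}$ and match it term-by-term against the commutator expression on the right-hand side. Write $P = P_\mathcal{M}$ and $Q = P_{\mathcal{M}^\bot} = I - P$. Since $\mathcal{M}$ is a submodule, the associated $\omega$-contraction is $T_i = M_i|_{\mathcal{M}}$, whose adjoint on $\mathcal{M}$ is $T_i^* = P M_i^*|_{\mathcal{M}}$; consequently $T_i T_i^* = M_i P M_i^*|_{\mathcal{M}}$, with the range automatically in $\mathcal{M}$ because $\mathcal{M}$ is invariant under each $M_i$. The only external input I would use is the identity from the Proposition in Section 3, namely $\text{SOT-}\lim_{m} \sum_{i=1}^m M_i M_i^* = I - E_0$ on $H^2$.

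First I would compute $\Delta_\mathcal{M}^2$ directly. For $\zeta \in \mathcal{M}$,
\[
\Delta_\mathcal{M}^2 \zeta = \zeta - \sum_{i=1}^\infty M_i P M_i^* \zeta = E_0\zeta + \sum_{i=1}^\infty M_i M_i^*\zeta - \sum_{i=1}^\infty M_i P M_i^*\zeta = E_0\zeta + \sum_{i=1}^\infty M_i Q M_i^*\zeta,
\]
where the second equality substitutes $\zeta = E_0\zeta + \sum_i M_i M_i^*\zeta$ and the third uses $I - P = Q$. Note this expression already lies in $\mathcal{M}$, being $\Delta_\mathcal{M}^2\zeta$ by definition.

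Next I would expand the commutator product. A direct expansion gives
\[
[M_i, Q][Q, M_i^*] = M_i Q M_i^* - M_i Q M_i^* Q - Q M_i Q M_i^* + Q M_i M_i^* Q .
\]
Evaluating on $\zeta \in \mathcal{M}$ and using $Q\zeta = 0$, the second and fourth terms vanish, leaving $M_i Q M_i^*\zeta - Q M_i Q M_i^*\zeta = P M_i Q M_i^*\zeta$. Summing over $i$ and adding $P E_0 P\zeta = P E_0 \zeta$, the right-hand side applied to $\zeta$ equals $P\big(E_0\zeta + \sum_i M_i Q M_i^*\zeta\big) = P\,\Delta_\mathcal{M}^2\zeta = \Delta_\mathcal{M}^2\zeta$, the last step because $\Delta_\mathcal{M}^2\zeta$ already lies in $\mathcal{M}$.

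The computation is elementary, so the only genuine care is in the analytic bookkeeping: I would need to justify that all the infinite sums converge in the strong operator topology and that the rearrangements above are legitimate. This follows once one observes that $\sum_i M_i Q M_i^*\zeta$ is the difference of the two $\text{SOT}$-convergent series $\sum_i M_i M_i^*\zeta$ and $\sum_i M_i P M_i^*\zeta$, whence the series of commutator terms $\sum_i P M_i Q M_i^*\zeta$ converges as well. This convergence step is the part I would treat most carefully; the rest is formal manipulation, which is presumably why the authors dismiss the proof as direct computation.
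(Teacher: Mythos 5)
Your computation is correct and is exactly the ``direct computation'' the paper alludes to when it omits the proof: you identify $T_i^*=P_\mathcal{M}M_i^*|_\mathcal{M}$, use the identity $\mathrm{SOT}\text{-}\lim_m\sum_{i=1}^m M_iM_i^*=I-E_0$ from Section 3 to rewrite $\Delta_\mathcal{M}^2\zeta=E_0\zeta+\sum_i M_iP_{\mathcal{M}^\bot}M_i^*\zeta$, and then check that each commutator term reduces on $\mathcal{M}$ to $P_\mathcal{M}M_iP_{\mathcal{M}^\bot}M_i^*\zeta$, with the convergence of the rearranged series justified by differencing two SOT-convergent sums. No gaps.
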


To simplify the notation,
 for every $f \in H^2\setminus \{0\}$, set
$$
N(f)=min \left\{n \in \mathbb{N} : E_n f \neq 0\right\},
$$
where $E_n$ denotes the orthogonal projection from $H^2$ onto $H_n.$ It is easy to see that for nonzero $f,g \in H^2,$
$$N(fg)=N(f)+N(g) .$$
A polynomial $f$ is said to divide another polynomial $g$, denoted $f \mid g$, if there exists a polynomial $h$ such that $g = fh;$ and the notation $f \nmid g$ indicates that $f$ does not divide $g$.

\begin{lem}\label{prop6.5}
Let $\mathcal{M}$ be a nonzero submodule of $H^2$ of finite rank, then  $P_\mathcal{M} 1\neq0.$
\end{lem}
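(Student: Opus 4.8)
The plan is to argue by contradiction. Suppose $P_{\mathcal M}1=0$, so that $f(0)=\langle f,1\rangle=0$ for every $f\in\mathcal M$ and $\mathcal M$ has some minimal degree $k\ge 1$ (that is, $E_nf=0$ for all $n<k$, $f\in\mathcal M$, and $\mathcal M_k:=\mathcal M\cap H_k\ne 0$). By Lemma \ref{lem6.1} the term $P_{\mathcal M}E_0P_{\mathcal M}$ annihilates $\mathcal M$, so
\[
\|\Delta_{\mathcal M}f\|^2=\sum_{i=1}^{\infty}\|P_{\mathcal M^\perp}M_i^*f\|^2,\qquad f\in\mathcal M .
\]
Since $\mathcal M$ has finite rank, $\Delta_{\mathcal M}$ is finite-rank, hence compact, and the whole aim is to contradict compactness by exhibiting unit vectors in $\mathcal M$ that tend weakly to $0$ while $\|\Delta_{\mathcal M}\cdot\|$ stays bounded away from $0$. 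The driving mechanism is special to infinitely many variables: letting $T$ be the bounded selfadjoint operator acting as $\tfrac{1}{p+1}I$ on each $H_p$, one has $M_j^*M_j\to T$ strongly as $j\to\infty$, whereas for any fixed $h$ one has $M_j^*h\to 0$ and $\|z_jh\|$ bounded below. Thus for fixed $\xi\in\mathcal M$ the family $u_j=z_j\xi\in\mathcal M$ is weakly null with $\|u_j\|^2\to\langle T\xi,\xi\rangle>0$.

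I would then split according to whether $\mathcal M$ is graded. If $\mathcal M$ is \emph{not} graded, then $T\mathcal M\not\subseteq\mathcal M$: indeed, were $T\mathcal M\subseteq\mathcal M$, closedness of $\mathcal M$ plus the spectral calculus of $T$ (each $E_p=\mathbf 1_{\{1/(p+1)\}}(T)$ is a uniform limit of polynomials in $T$, since $1/(p+1)$ is an isolated point of $\sigma(T)$) would force $E_p\mathcal M\subseteq\mathcal M$ for all $p$, i.e. $\mathcal M$ graded. Hence there is $f\in\mathcal M$ with $\varepsilon:=\|P_{\mathcal M^\perp}Tf\|>0$; taking $u_j=z_jf$ and keeping only the $i=j$ summand gives
\[
\|\Delta_{\mathcal M}u_j\|\ \ge\ \|P_{\mathcal M^\perp}M_j^*M_jf\|\ \xrightarrow[j\to\infty]{}\ \|P_{\mathcal M^\perp}Tf\|=\varepsilon>0,
\]
contradicting compactness of $\Delta_{\mathcal M}$.

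In the graded case $\mathcal M=\bigoplus_{p\ge k}\mathcal M_p$ the diagonal term above cancels (now $Tf\in\mathcal M$), so I would use a fixed off-diagonal index instead. Here $\Delta_{\mathcal M}^2$ preserves the grading and $\Delta_{\mathcal M}^2|_{\mathcal M_k}=I_{\mathcal M_k}$; if $\dim\mathcal M_k=\infty$ this already makes $\Delta_{\mathcal M}$ infinite-rank, so assume $\dim\mathcal M_k<\infty$. Pick $0\ne\phi\in\mathcal M_k$ and an index $i_0$ with $\chi:=M_{i_0}^*\phi\ne 0$ (so $\chi\in H_{k-1}$). A direct monomial computation yields the exact identity $M_{i_0}^*(z_l\phi)=\tfrac{k}{k+1}z_l\chi$ for every $l\ne i_0$, and, because $\mathcal M$ is graded with $\dim\mathcal M_k<\infty$, one has $P_{\mathcal M^\perp}(z_l\chi)=z_l\chi-P_{\mathcal M_k}(z_l\chi)$ with $\|P_{\mathcal M^\perp}(z_l\chi)\|\to\|\chi\|/\sqrt{k}>0$ (using $\|z_l\chi\|^2\to\|\chi\|^2/k$ and $P_{\mathcal M_k}(z_l\chi)\to 0$, since $z_l\chi\to 0$ weakly and $\mathcal M_k$ is finite dimensional). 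Therefore, with $u_l=z_l\phi$,
\[
\|\Delta_{\mathcal M}u_l\|\ \ge\ \|P_{\mathcal M^\perp}M_{i_0}^*(z_l\phi)\|=\tfrac{k}{k+1}\|P_{\mathcal M^\perp}(z_l\chi)\|\ \xrightarrow[l\to\infty]{}\ \tfrac{\sqrt k}{k+1}\|\chi\|>0,
\]
and since $u_l\to 0$ weakly with $\|u_l\|^2\to\|\phi\|^2/(k+1)$, this again contradicts compactness.

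The main obstacle is exactly this graded case: the infinitely-many-variables divergence that settles the non-graded case evaporates on homogeneous submodules (there $z_j\xi\in\ker\Delta_{\mathcal M}$ as soon as $\deg\xi$ is large), so one must descend to the bottom component $\mathcal M_k$ and combine the exact commutation identity for $M_{i_0}^*$ with the finiteness of $\dim\mathcal M_k$ to keep $\|\Delta_{\mathcal M}u_l\|$ bounded below. I expect the delicate points to be verifying the dichotomy ``$T\mathcal M\subseteq\mathcal M\iff\mathcal M$ graded'' and the boundedness-below estimate in the graded case; the remainder is routine bookkeeping with Lemma \ref{lem6.1} and the weak-null/compactness criterion. (Note that this scheme in fact yields the stronger conclusion $\mathcal M=H^2$, since finite rank forces $\mathcal M$ graded with $k=0$, whence $1\in\mathcal M$; for the present statement it is enough to read off $P_{\mathcal M}1\ne 0$.)
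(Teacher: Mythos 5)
Your proof is correct---every load-bearing step checks out---but it takes a genuinely different route from the paper's. The paper's proof is purely algebraic, with no case split and no compactness: assuming $P_{\mathcal{M}}1=0$, it picks $f\in\mathcal{M}$ of minimal vanishing order $r=N(f)\geq 1$ and a coordinate $k$ with $N(M_k^*E_rf)=r-1$, uses the finite-rank hypothesis twice to produce linear dependences (first among $\{P_{\mathcal{M}^\bot}M_k^*z_if\}_i$, then among $\{\Delta_{\mathcal{M}}M_k^*g_t\}_t$), thereby obtaining an element of $\mathcal{M}$ whose relevant backward shifts stay in $\mathcal{M}$; minimality of $r$ forces a degree-$(r-1)$ component of such a shift to vanish, while an explicit leading-monomial divisibility computation---resting on the same commutation identity $M_k^*M_jh=\frac{r}{r+1}M_jM_k^*h$ for $h\in H_r$, $j\neq k$, that you use---shows it cannot. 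You instead play the compactness of the finite-rank operator $\Delta_{\mathcal{M}}$ against the weakly null sequences $z_jf$, organized by the spectral dichotomy ``$\mathcal{M}$ is graded if and only if $T\mathcal{M}\subseteq\mathcal{M}$'' for the degree operator $T=\text{SOT-}\lim_jM_j^*M_j$: in the non-graded case the bound $\|P_{\mathcal{M}^\bot}M_j^*M_jf\|\to\|P_{\mathcal{M}^\bot}Tf\|>0$ finishes immediately, while in the graded case you descend to the bottom homogeneous piece $\mathcal{M}_k$, which is finite-dimensional because $\Delta_{\mathcal{M}}$ is isometric on it, and exploit the off-diagonal term $M_{i_0}^*$. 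Your route is more structural and buys more: it isolates the reusable implication ``finite rank $\Rightarrow$ graded'', and, as you observe, it yields Theorem \ref{thm6.3} ($\mathcal{M}=H^2$) outright, bypassing Lemma \ref{108} and the computation in the paper's proof of that theorem; in exchange, the paper's argument is elementary and self-contained (no spectral calculus, no Stone--Weierstrass, no weak convergence) and handles graded and non-graded submodules in one stroke. Two imprecisions in your write-up are harmless but worth flagging. First, your opening assertion that $P_{\mathcal{M}}1=0$ yields a minimal degree $k$ with $\mathcal{M}\cap H_k\neq 0$ can fail for non-graded $\mathcal{M}$ (for $\mathcal{M}=[z_1+z_2^2]$ the minimal vanishing order is $1$ yet $\mathcal{M}\cap H_1=0$); you only invoke it in the graded case, where it is automatic. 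Second, the parenthetical claim that for graded $\mathcal{M}$ one has $z_j\xi\in\operatorname{ker}\Delta_{\mathcal{M}}$ once $\deg\xi$ is large is false as stated (for $\mathcal{M}=[z_1^n]$ and $\xi=z_1^n$ one has $M_1^*(z_jz_1^n)=\frac{n}{n+1}z_1^{n-1}z_j\notin\mathcal{M}$ for every $j\geq 2$); what your argument actually uses there---that the diagonal lower bound degenerates in the graded case because $Tf\in\mathcal{M}$---is correct.
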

\begin{proof}
Supposed that $P_\mathcal{M} 1=0$, then there is a nonzero $f\in M$ and $k \in \mathbb{N}^{+}$ such that
 \begin{equation}\label{Nf}
       N(f)=\min \{N(g): g \in \mathcal{M} \setminus \{0\}  \}=r \geq 1 \quad  \text{and}\quad
N\left(M_k^* E_r f\right)=N(f)-1 \geq 0.
    \end{equation}
By Lemma \ref{lem6.1}, for $h \in \mathcal{M},$ $\Delta_{\mathcal{M}} h=0$ if and only if $E_0 h=0$ and $P_{\mathcal{M}^\bot} M_i^* h=0, \,i\geq 1.$
Hence from the fact that $\mathcal{M}$ has finite rank, there exists an $m>0$ such that
 $$\operatorname{dim} \operatorname{span}\{P_{M^\bot} M_{k}^* z_{i}f, i=1,2,\cdots\}=m-1<\infty.$$
 Then for every $t \in \mathbb{N}$, there are $\widetilde{\lambda}_{1}^{(t)}, \cdots, \widetilde{\lambda}_{m}^{(t)},$ which are not all zero, such that
$$
P_{\mathcal{M}^{\perp}}\left(\sum_{i=1}^m \widetilde{\lambda}_{i}^{(t)} M_k^* M_{m t+i+k} f\right)=\sum_{i=1}^m  \widetilde{\lambda}_{i}^{(t)} P_{\mathcal{M}^{\perp}} M_k^* M_{m t+i+k} f=0.
$$
Set $g_t=\sum\limits_{i=1}^m  \widetilde{\lambda}_{i}^{(t)} M_{m t+i+k} f$, we have both
$g_t \in \mathcal{M}$ and $M_k^* g_t \in \mathcal{M}.$
It follows from $\text{rank}\, \mathcal{M}< \infty$ that $\{\Delta_\mathcal{M} M_k^* g_t\}_{t=1}^{\infty}$ is linearly dependent.
 Hence there is a positive integer $n$ and $\lambda_1, \lambda_2, \cdots, \lambda_n,$ which are not all zero, such that
$$
\Delta_\mathcal{M} \sum_{t=1}^n \lambda_t M_k^* g_t=\sum_{t=1}^n \lambda_t \Delta_\mathcal{M} M_k^* g_t=0.
$$
By Lemma \ref{lem6.1}, it is easy to see that
 \begin{equation}\label{touying}
       P_{\mathcal{M}^{\perp}} M_j^* \sum_{t=1}^n \lambda_t M_k^* g_t=0, ~\text{for all} ~j \in \mathbb{N}.
    \end{equation}
Notice that $\lambda_1,\cdots,\lambda_n$ are not all zero, then there exists a $ a \in \{1,\cdots,n\}$ and $ i_a \in \{1,\cdots,m\}$ such that $\lambda_a\widetilde{\lambda}_{i_a}^{(a)}\neq 0.$
By \eqref{touying}, $M_{m a+i_a+k}^* \sum\limits_{t=1}^n \lambda_t M_k^* g_t \in \mathcal{M}.$
Then by \eqref{Nf},
\begin{eqnarray}\label{eq104}
E_{r-1}\left(M_{m a+i_a+k}^* \sum\limits_{t=1}^n \lambda_t M_k^* g_t\right)= 0.
\end{eqnarray}
Since $E_{i} M_j^*=M_j^* E_{i+1},$ we have
\begin{align}\label{eq100}
E_{r-1} \left(M_{m a+i_a+k}^* \sum\limits_{t=1}^n \lambda_t M_k^* g_t\right)  &= M_{m a+i_a+k}^* \sum\limits_{t=1}^n \lambda_t M_k^* \left(\sum\limits_{i=1}^m  \widetilde{\lambda}_{i}^{(t)} M_{m t+i+k} E_r f\right) \nonumber \\
&= M_{m a+i_a+k}^* M_k^* \sum\limits_{t=1}^n \left(\sum\limits_{i=1}^m \lambda_t \widetilde{\lambda}_{i}^{(t)} M_{m t+i+k} E_r f\right).
\end{align}
Write $E_rf=f_3+f_4,$ where $f_3 \in \operatorname{ker} M_k^*,$ $f_4 \in \overline{\operatorname{ran} M_k}.$
 For monomials $z^\alpha \in \mathcal{P}_{\infty},~|\alpha|=r,$
$$M_k^* M_{m t+i+k} (z^\alpha) = \frac{r}{r+1}M_{m t+i+k} M_k^* (z^\alpha), \quad 1\leq t\leq n, 1\leq i\leq m ,$$
then
 \begin{equation}\label{eq101}
\begin{aligned}
M_{m a+i_a+k}^* &M_k^* \sum_{t=1}^n \sum_{i=1}^m \lambda_t \widetilde{\lambda}_{i}^{(t)} M_{m t+i+k} E_r f \nonumber\\&=M_{m a+i_a+k}^* M_k^* \sum_{t=1}^n \sum_{i=1}^m \lambda_t \widetilde{\lambda}_{i}^{(t)} M_{m t+i+k} (f_3)+M_{m a+i_a+k}^* M_k^* \sum_{t=1}^n \sum_{i=1}^m \lambda_t \widetilde{\lambda}_{i}^{(t)} M_{m t+i+k} (f_4)  \nonumber\\&=M_{m a+i_a+k}^* M_k^* \sum_{t=1}^n \sum_{i=1}^m \lambda_t \widetilde{\lambda}_{i}^{(t)} M_{m t+i+k} (f_4) \\&=M_{m a+i_a+k}^* \sum_{t=1}^n \sum_{i=1}^m \lambda_t \widetilde{\lambda}_{i}^{(t)}M_k^* M_{m t+i+k} (f_4) \nonumber\\&= \frac{r}{r+1} M_{m a+i_a+k}^* \sum_{t=1}^n \sum_{i=1}^m \lambda_t \widetilde{\lambda}_{i}^{(t)} M_{m t+i+k} M_k^*(f_4)  .
\end{aligned}
\end{equation}
Therefore by \eqref{eq104} and \eqref{eq100},
 \begin{equation}\label{eq1011}
M_{m a+i_a+k}^* \sum_{t=1}^n \sum_{i=1}^m \lambda_t \widetilde{\lambda}_{i}^{(t)} M_{m t+i+k} M_k^*(f_4)=0.
\end{equation}
On the other hand, by \eqref{Nf},
\begin{equation}\label{f4}
\begin{aligned}
M_k^*(f_4)\neq0,
\end{aligned}
\end{equation}
and write \begin{equation}\label{eq.10111}
M_k^*(f_4)=\sum\limits_{|\beta|=r-1,~b_{\beta}\neq0}b_{\beta} z^{\beta}.
\end{equation}
Let
$$J=\{z^{\beta}:|\beta|=r-1,~ b_{\beta}\neq0\},$$and
$$J_{j}=\{z^{\beta} \in J: z_{m a+i_a+k}^j |z^{\beta} ~\text{and}~z_{m a+i_a+k}^{j+1} \nmid z^{\beta}\}, \quad 0\leq j\leq r-1.$$
Obviously there exists at least one $ j \in \{0,\cdots, r-1\}$ such that $J_j \neq \emptyset.$
Set $$\mathfrak{l}=max\{j:0\leq j\leq r-1, J_{j}\neq \emptyset \}.$$ By \eqref{eq.10111},
$$\sum_{t=1}^n \sum_{i=1}^m \lambda_t \widetilde{\lambda}_{i}^{(t)} M_{m t+i+k} M_k^*(f_4)=\sum_{t=1}^n \sum_{i=1}^m \lambda_t \widetilde{\lambda}_{i}^{(t)} M_{m t+i+k} \sum\limits_{|\beta|=r-1,~b_{\beta}\neq0}b_{\beta} z^{\beta}. $$
For $z^{\beta}\in J_{\mathfrak{l}},$ it is not difficult to check that
 $$\left\langle \lambda_a \widetilde{\lambda}_{i_a}^{(a)} b_{\beta} z_{m a+i_a+k}z^{\beta}, \sum_{t=1}^n \sum_{i=1}^m \lambda_t \widetilde{\lambda}_{i}^{(t)} M_{m t+i+k} M_k^*(f_4)-\lambda_a \widetilde{\lambda}_{i_a}^{(a)} b_{\beta} z_{m a+i_a+k}z^{\beta}\right\rangle =0.$$
 Hence   \begin{equation}\label{eq10111}
M_{m a+i_a+k}^* \sum_{t=1}^n \sum_{i=1}^m \lambda_t \widetilde{\lambda}_{i}^{(t)} M_{m t+i+k} M_k^*(f_4)\neq 0,
\end{equation}
which contradicts to \eqref{eq1011}.

\end{proof}
\begin{lem}\label{108}
Given $g \in H_n, n>0,$ if there exists a $k \in \mathbb{N}^{+}$ and a set $\left\{\lambda_i\in \mathbb{C}:i\geq 1\right\},$ such that
\begin{itemize}
     \item[(1)] $\text{for all}~ j \geq 0,~\left\|\sum\limits_{i=1}^k \lambda_{j k+i} M_{j k+i} g\right\|^2=\sum\limits_{i=1}^k|\lambda_{j k+i}|^2\|g\|^2$;
     \item[(2)]
$\text{for all}~  j \geq 0,~\sum\limits_{i=1}^k\left|\lambda_{k j+i}\right|^2>0,$
\end{itemize}
then $g=0.$
\end{lem}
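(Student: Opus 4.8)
The plan is to argue by contradiction. Suppose $g\neq 0$; I will extract a contradiction from the hypothesis applied to the blocks with $j\to\infty$, whose variables $z_{jk+1},\dots,z_{jk+k}$ march off to infinity. Writing $T^{(j)}=\sum_{i=1}^{k}\lambda_{jk+i}M_{jk+i}$ and $c^{(j)}=(\lambda_{jk+1},\dots,\lambda_{jk+k})$, note that $\mathbb{T}=(M_1,M_2,\dots)$ is exactly the $\omega$-contraction attached to $H^2$ (since $\sum_{i}M_iM_i^*=I-E_0\le I$), so by \eqref{3.210} one always has $\|T^{(j)}g\|^2\le\|c^{(j)}\|^2\|g\|^2$, and hypothesis $(1)$ forces equality for every $j$ while $(2)$ gives $\|c^{(j)}\|^2>0$. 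I will show that, under $g\neq0$, the diagonal part of $\|T^{(j)}g\|^2$ only ever contributes the fraction $1/(n+1)$ of $\|c^{(j)}\|^2\|g\|^2$ asymptotically, while the off-diagonal part becomes negligible; since $1/(n+1)<1$ for $n\ge 1$, this is incompatible with the equality in $(1)$.

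The first step is two elementary computations in the monomial basis, using $g=\sum_{|\alpha|=n}g_\alpha z^\alpha$, $\|z^\alpha\|^2=\alpha!/n!$ and $M_i^*z^\alpha=(\alpha_i/|\alpha|)z^{\alpha-e_i}$ (here $e_a$ is the multi-index with a single $1$ in slot $a$). These give the diagonal norms
\[
\|z_a g\|^2=\frac{1}{n+1}\bigl(\|g\|^2+\|g\|_a^2\bigr),\qquad \|g\|_a^2:=\sum_{|\alpha|=n}\alpha_a|g_\alpha|^2\|z^\alpha\|^2,
\]
and the off-diagonal inner products
\[
\langle z_a g,z_b g\rangle=\frac{1}{n+1}\sum_{\alpha:\,\alpha_b>0}(\alpha_a+1)\,g_\alpha\,\overline{g_{\alpha-e_b+e_a}}\,\|z^\alpha\|^2\qquad(a\neq b).
\]
The decisive summability facts are $\sum_a\|g\|_a^2=n\|g\|^2<\infty$ and $\sum_b P_b\le n\|g\|^2$, where $P_b:=\sum_{\alpha:\,\alpha_b>0}|g_\alpha|^2\|z^\alpha\|^2$; hence $\|g\|_a^2\to 0$ and $P_a\to 0$ as $a\to\infty$.

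Next I apply Cauchy--Schwarz to the off-diagonal formula (splitting the weight $\alpha_a+1$ and reindexing the second factor by $\beta=\alpha-e_b+e_a$) to obtain the decay estimate
\[
|\langle z_a g,z_b g\rangle|\le\frac{1}{n+1}\sqrt{\bigl(P_b+\|g\|_a^2\bigr)\bigl(P_a+\|g\|_b^2\bigr)},\qquad a\neq b.
\]
Expanding $\|T^{(j)}g\|^2$ into its diagonal and cross terms, inserting the displays above, and bounding $(\sum_s|\lambda_{jk+s}|)^2\le k\|c^{(j)}\|^2$, the equality in $(1)$ becomes
\[
\|c^{(j)}\|^2\|g\|^2\le\frac{1}{n+1}\|c^{(j)}\|^2\|g\|^2+\frac{1}{n+1}\|c^{(j)}\|^2\delta_j+k\,\varepsilon_j\,\|c^{(j)}\|^2,
\]
where $\delta_j=\max_{a\ge jk+1}\|g\|_a^2$ and $\varepsilon_j=\frac{1}{n+1}\max_{a\ge jk+1}(P_a+\|g\|_a^2)$ both tend to $0$. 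Dividing by $\|c^{(j)}\|^2>0$ and letting $j\to\infty$ yields $\frac{n}{n+1}\|g\|^2\le 0$, whence $g=0$, the desired contradiction.

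The main obstacle is the control of the off-diagonal (cross) terms: a priori they could conspire to raise $\|T^{(j)}g\|^2$ all the way to the extremal value $\|c^{(j)}\|^2\|g\|^2$, so that the equality in $(1)$ would carry no information. The content of the argument is precisely the decay bound on $\langle z_a g,z_b g\rangle$, which shows that along any block of indices tending to infinity both factors $P_\bullet+\|g\|_\bullet^2$ vanish, making the cross terms negligible relative to $\|c^{(j)}\|^2\|g\|^2$. The hypothesis $n\ge 1$ enters exactly at the end, since $\tfrac{n}{n+1}>0$ only when $n\ge 1$; for $n=0$ (constants) the diagonal fraction would itself be $1$ and no contradiction would arise, consistent with the statement requiring $n>0$.
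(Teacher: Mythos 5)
Your proof is correct, and it is verifiably sound at its crux: the diagonal identity $\|z_ag\|^2=\tfrac{1}{n+1}(\|g\|^2+\|g\|_a^2)$, the off-diagonal formula, the reindexing identity $(\alpha_a+1)\|z^\alpha\|^2=\alpha_b\|z^{\alpha-e_b+e_a}\|^2$ underlying your Cauchy--Schwarz bound, and the summability facts $\sum_a\|g\|_a^2=n\|g\|^2$ and $\sum_a P_a\le n\|g\|^2$ all check out, so the limit $j\to\infty$ does force $\|g\|^2\le\tfrac{1}{n+1}\|g\|^2$. However, your route is genuinely different from the paper's. The paper argues combinatorially and exactly: since $\deg g=n$, every monomial of $g$ must avoid at least one of the $n+1$ variable blocks $\{z_{jk+1},\dots,z_{jk+k}\}$, $0\le j\le n$ (pigeonhole), so $g$ splits as an orthogonal sum $g_1+\cdots+g_{n+1}$, where no variable of the $j$-th block divides any monomial of $g_j$; for that block, all cross terms vanish identically, the diagonal terms scale by exactly $1/(n+1)$, and the contribution of $\sum_{m\ne j}g_m$ is absorbed by the contraction inequality \eqref{3.210}, giving $\tfrac{1}{n+1}\|g_j\|^2\ge\|g_j\|^2$ and hence $g_j=0$ for each $j$. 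You instead work asymptotically with arbitrary far-out blocks, showing that the diagonal excess and the cross terms are $o(\|c^{(j)}\|^2)$ as $j\to\infty$, so no decomposition of $g$ is needed. Both arguments rest on the same elementary fact -- multiplying a degree-$n$ monomial by a variable it does not contain shrinks its norm by exactly $1/(n+1)$ -- but they consume the hypothesis differently: the paper's proof uses conditions (1)--(2) only for the finitely many blocks $0\le j\le n$, and so proves the formally stronger statement in which the hypothesis is assumed only for those $j$; your proof uses only a tail $j\ge J$, and so proves the (different) strengthening in which the hypothesis is assumed only for all sufficiently large $j$. The paper's version is shorter and exact; yours is more quantitative and avoids the pigeonhole partition, at the cost of needing infinitely many instances of the hypothesis.
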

\begin{proof} Write
$$
g=\sum_{|\alpha|=n,~ a_\alpha \neq 0} a_\alpha z^\alpha.
$$
 Set
$$A=\{z^\alpha:|\alpha|=n,~ a_\alpha \neq 0\},\quad A_1=\{z^\alpha \in A : z_t \nmid z^\alpha~ \text{for all} ~1 \leq t \leq k \},$$ and
$$
\begin{gathered}
A_m=\left\{z^\alpha \in A \backslash \bigcup_{s=1}^{m-1} A_s : z_t \nmid  z^\alpha~ \text{for all} ~ k(m-1)+1 \leq t \leq k m\right\}, ~\text{if} ~2\leq m \leq n. \\
\end{gathered}
$$
Let \begin{eqnarray}\label{104}
A_{n+1}=A\setminus \mathop{\cup}\limits_{m=1}^{n}A_m.
\end{eqnarray}
We claim that $$A_{n+1}\subseteq \{z^\alpha \in A: z_t \nmid z^\alpha~ \text{for all}~ kn+1 \leq t \leq k(n+1)\}.$$
In fact, for any $z^\alpha \in A_{n+1},$ by \eqref{104}, we have $z^\alpha \notin A_m$ for all $1\leq m\leq n.$ By the definition of $A_m,$ there exist
 $k(t-1)+1\leq i_t\leq kt, ~1 \leq t\leq n, $
 such that $z_{i_t} |z^\alpha$ for all $1 \leq t\leq n.$
 Notice that $\text{deg} z^\alpha=n,$ hence $z_t\nmid z^\alpha$ for all $kn+1 \leq t \leq k(n+1).$

Obviously, $A=\sqcup_{m=1}^{n+1} A_m,$ where $\sqcup$ means the disjoint union.
Write
$$
g=g_1+\cdots+g_{n+1},\quad
\text{where}~
g_m=\sum_{ z^\alpha \in A_m} a_\alpha z^\alpha,~  1 \leq m \leq n+1,
$$
then $\{g_m\}_{m=1}^{n+1}$ is pairwise orthogonal.
Hence for every $1 \leq j \leq n+1,$
\begin{eqnarray}\label{6.8}
\begin{aligned}
 \left\|\sum_{i=1}^k \lambda_{(j-1) k+i} M_{(j-1) k+i} g\right\|^2&=\sum_{i=1}^k|\lambda_{(j-1) k+i}|^2\|g\|^2\\&=\sum_{i=1}^k\left|\lambda_{(j-1) k+i}\right|^2\left\|g_j\right\|^2+\sum_{i=1}^k|\lambda_{(j-1) k+i}|^2\left\|\sum_{m \neq j} g_m\right\|^2.
 \end{aligned}
\end{eqnarray}
Notice that for all $(j-1) k+1 \leq w_1, w_2 \leq j k, ~t \neq j ,$
$$
\begin{aligned}
\langle M_{w_1} g_j, M_{w_2} g_t\rangle&=\langle M_{w_2}^* M_{w_1} g_j, g_t\rangle\\&=\langle M_{w_2}^* M_{w_1} \sum_{ z^\alpha \in A_j} a_\alpha z^\alpha, g_t\rangle\\&= \sum_{ z^\alpha \in A_j} a_\alpha \langle M_{w_2}^* M_{w_1} z^\alpha, g_t\rangle
\\&= \sum_{ z^\alpha \in A_j} a_\alpha\frac{1}{n+1} \delta_{w_1 w_2}\langle z^\alpha, g_t\rangle\\&=\frac{1}{n+1} \delta_{w_1 w_2}\langle g_j, g_t\rangle=0,
\end{aligned}
$$
where $\delta_{w_1 w_2}$ denotes the Kronecker delta.
Moreover for all $(j-1) k+1 \leq r_1\neq r_2 \leq j k,$
$$
\begin{aligned}
\langle M_{r_1} g_j, M_{r_2} g_j\rangle=\langle M_{r_2}^* M_{r_1} g_j, g_j\rangle=\left\langle M_{r_2}^* M_{r_1} \sum_{ z^\alpha \in A_j} a_\alpha z^\alpha, g_j\right\rangle= \sum_{ z^\alpha \in A_j} a_\alpha \langle M_{r_2}^* M_{r_1} z^\alpha, g_j\rangle=0.
\end{aligned}
$$
Theorefore
\begin{equation}\label{111}
\begin{aligned}
&\left\|\sum_{i=1}^k \lambda_{(j-1) k+i} M_{(j-1) k+i} g\right\|^2 \\& =\left\|\sum_{i=1}^k \lambda_{(j-1) k+i} M_{(j-1) k+i} g_j\right\|^2+\left\|\sum_{i=1}^k \lambda_{(j-1) k+i} M_{(j-1) k+i} \sum_{m \neq j} g_m\right\|^2 \\
& =\sum_{i=1}^k\left|\lambda_{(j-1) k+i}\right|^2\left\|M_{(j-1) k+i} g_j\right\|^2+\left\|\sum_{i=1}^k \lambda_{(j-1) k+i} M_{(j-1) k+i} \sum_{m \neq j} g_m\right\|^2 \\
& =\sum_{i=1}^k\left|\lambda_{(j-1) k+i}\right|^2\left\|\sum_{ z^\alpha \in A_j} a_\alpha M_{(j-1) k+i} z^\alpha\right\|^2+\left\|\sum_{i=1}^k \lambda_{(j-1) k+i} M_{(j-1) k+i} \sum_{m \neq j} g_m\right\|^2 \\
& =\sum_{i=1}^k\left|\lambda_{(j-1) k+i}\right|^2\sum_{ z^\alpha \in A_j}| a_\alpha|^2\left\|M_{(j-1) k+i} z^\alpha\right\|^2+\left\|\sum_{i=1}^k \lambda_{(j-1) k+i} M_{(j-1) k+i} \sum_{m \neq j} g_m\right\|^2 \\
& =\sum_{i=1}^k\left|\lambda_{(j-1) k+i}\right|^2 \sum_{ z^\alpha \in A_j}|a_\alpha|^2 \frac{1}{n+1}\left\|z^\alpha\right\|^2+\left\|\sum_{i=1}^k \lambda_{(j-1) k+i} M_{(j-1) k+i} \sum_{m \neq j} g_m\right\|^2 \\
& =\frac{1}{n+1} \sum_{i=1}^k\left|\lambda_{(j-1) k+i}\right|^2\left\|g_j\right\|^2+\left\|\sum_{i=1}^k \lambda_{(j-1) k+i} M_{(j-1) k+i} \sum_{m \neq j}g_m\right\|^2.
\end{aligned}
\end{equation}
Since $\left(M_1, M_2, \cdots\right)$ is an $\omega$-contraction, by \eqref{3.210},
\begin{equation}\label{112}
\left\|\sum_{i=1}^k \lambda_{(j-1) k+i} M_{(j-1) k+i} \sum_{m \neq j}^{k+1} g_m\right\|^2 \leq \sum_{i=1}^k\left|\lambda_{(j-1) k+i}\right|^2\left\|\sum_{m \neq j}^{k+1} g_m\right\|^2.
\end{equation}
By \eqref{6.8}, \eqref{111} and \eqref{112},
$$
\frac{1}{n+1} \sum_{i=1}^k\left|\lambda_{(j-1) k+i}\right|^2\left\|g_j\right\|^2 \geq \sum_{i=1}^k\left|\lambda_{(j-1) k+i}\right|^2\left\|g_j\right\|^2.
$$
Therefore
$$
\frac{1}{n+1}\left\|g_j\right\|^2 \geq\left\|g_j\right\|^2.
$$
This implies that
$
\left\|g_j\right\|^2=0,
$
hence $g=0.$
\end{proof}

\noindent$The ~proof ~of ~Theorem~ \ref{thm6.3}.$ For $i\geq1,$
$$
\Delta_\mathcal{M}^2 (P_\mathcal{M}z_i)=\left(P_\mathcal{M}-\sum \limits_{j=1}^{\infty} M_j P_{\mathcal{M}} M_j^*\right)P_Mz_i=P_\mathcal{M}z_i-\sum \limits_{j=1}^{\infty} M_j P_{\mathcal{M}} M_j^*z_i=P_\mathcal{M}z_i-z_iP_\mathcal{M} 1.
$$
Suppose that $\mathcal{M}$ has finite rank, and set $$n=\text{dim span}\{\Delta_\mathcal{M}^2 (P_\mathcal{M}z_i)|i\geq1\}+1.$$
It follows that there exist $\lambda_1,\cdots,\lambda_n,$ which are not all zero such that
$$\sum_{i=1}^n \lambda_i(P_\mathcal{M}z_i-z_iP_\mathcal{M}1)=\sum\limits_{i=1}^n \lambda_i \Delta_\mathcal{M}^2 P_\mathcal{M} z_i=0.$$
Hence $$\sum_{i=1}^n \lambda_i z_iP_\mathcal{M}1=\sum_{i=1}^n \lambda_iP_\mathcal{M}z_i,$$ which implies that
\begin{align}\label{zhangruoyu1}
\left\|\sum_{i=1}^n \lambda_i z_i P_\mathcal{M} 1\right\|^2 & =\left\langle\sum_{i=1}^n \lambda_i P_\mathcal{M} z_i, \sum_{i=1}^n \lambda_i z_i P_\mathcal{M} 1\right\rangle \nonumber\\
& =\sum\limits_{1\leq i \neq j\leq n}\left\langle\lambda_i P_\mathcal{M} z_i, \lambda_j z_j P_\mathcal{M} 1\right\rangle  +\sum_{i=1}^n\left\langle\lambda_i P_\mathcal{M} z_i, \lambda_i z_i P_\mathcal{M} 1\right\rangle
\nonumber\\&=
\sum\limits_{1\leq i \neq j\leq n}\lambda_i\overline{\lambda_j} \left\langle M_j^*(I-P_{\mathcal{M}^\perp}) z_i,   P_\mathcal{M} 1\right\rangle  +\sum_{i=1}^n |\lambda_i|^2 \left\langle P_\mathcal{M} M_i^*P_\mathcal{M} z_i, P_\mathcal{M} 1\right\rangle \nonumber\\&=
\sum\limits_{1\leq i \neq j\leq n}\lambda_i\overline{\lambda_j} \left\langle M_j^* z_i,   P_\mathcal{M} 1\right\rangle  +\sum_{i=1}^n |\lambda_i|^2 \left\langle P_\mathcal{M} M_i^*P_\mathcal{M} z_i, P_\mathcal{M} 1\right\rangle
\nonumber\\&=
0 +\sum_{i=1}^n |\lambda_i|^2 \left\langle P_\mathcal{M}P_\mathcal{M}  M_i^*z_i, P_\mathcal{M} 1\right\rangle
\nonumber\\&=\sum_{i=1}^n|\lambda_i|^2\langle 1, P_\mathcal{M} 1\rangle.
\end{align}
Write \begin{equation}\label{PM1}
      P_\mathcal{M} 1=c+g,
      \end{equation}
       where $c$ is a constant and $g(0)=0.$
Notice that
\begin{equation}\label{zhangruoyu2}
     \left\|\sum_{i=1}^n \lambda_i z_i P_\mathcal{M }1\right\|^2  =\left\|\sum_{i=1}^n \lambda_ic z_i \right\|^2 +\left\|\sum_{i=1}^n \lambda_i z_i g\right\|^2=
\sum_{i=1}^n |\lambda_i|^2|c|^2 +\left\|\sum_{i=1}^n \lambda_i z_i g\right\|^2,
      \end{equation}
and
\begin{equation}\label{zhangruoyu3}
    \sum_{i=1}^n|\lambda_i|^2\langle 1, P_\mathcal{M} 1\rangle=\sum_{i=1}^n|\lambda_i|^2(|c|^2+||g||^2),
      \end{equation}
therefore by \eqref{zhangruoyu1}, \eqref{zhangruoyu2} and \eqref{zhangruoyu3},
 $\left\|\sum\limits_{i=1}^n \lambda_i z_i g\right\|^2 =\sum\limits_{i=1}^n|\lambda_i|^2||g||^2.$
Set $g=\sum\limits_{m=1}^{\infty}g_m,$ where $g_m\in H_m.$ It is easy to see that
$$\left \langle \sum\limits_{i=1}^n \lambda_i z_i g_m, \sum\limits_{i=1}^n \lambda_i z_i g_k\right \rangle=0, \quad m\neq k. $$
Thus,
\begin{equation}\label{132}
\sum\limits_{m=1}^{\infty}\left\|\sum\limits_{i=1}^n \lambda_i z_i g_m\right\|^2=\left\|\sum\limits_{i=1}^n \lambda_i z_i g\right\|^2 =\sum\limits_{i=1}^n|\lambda_i|^2||g||^2=\sum\limits_{m=1}^{\infty}\sum\limits_{i=1}^n|\lambda_i|^2||g_m||^2.
\end{equation} Since $\left(M_1, M_2, \ldots\right)$ is an $\omega$-contraction, by \eqref{3.210},
\begin{equation}\label{142}
\left\|\sum\limits_{i=1}^n \lambda_i z_i g_m\right\|^2\leq \sum\limits_{i=1}^n|\lambda_i|^2||g_m||^2.
\end{equation}
From \eqref{132}, \eqref{142},
it is easy to see that for all $m\geq 1,$ $$\left\|\sum\limits_{i=1}^n \lambda_i z_i g_m\right\|^2=\sum\limits_{i=1}^n|\lambda_i|^2||g_m||^2.$$
The same reasoning implies that there exist $\{\lambda_{jn+i}\in \mathbb{C}:j \geq 0, 1\leq i \leq n\}$ such that
  $$\left\|\sum\limits_{i=1}^n \lambda_{j n+i} M_{j n+i} g\right\|^2=\sum\limits_{i=1}^n|\lambda_{j n+i}|^2\|g_m\|^2,~\text{for}~ m \geq 1~ \text{and} ~j\geq 0,$$
and
$$\sum\limits_{i=1}^n|\lambda_{jn+i}|^2>0.$$
By Lemma \ref{108}, for all $m\geq 1,$
 $g_m=0,$ and hence $g=0.$ By Proposition \ref{prop6.5}, $P_\mathcal{M}1 \neq 0.$ Then by \eqref{PM1}, $P_\mathcal{M}1=1,$ $i.e.~ 1\in M.$
  Therefore $\mathcal{M}=H^2.$
\hfill \qedsymbol

\section{Appendix}

In this section, we will prove \eqref{oula}, Proposition \ref{2.151} and Proposition \ref{3.21}.

Let \( \mathcal{H} \) be a \( d \)-contractive Hilbert module of finite rank.
Suppose that $\operatorname{dim} \operatorname{ran}\Delta_\mathcal{H}=n.$
Let \begin{equation}\label{bianhao7}
 A=\left\{e_1, \cdots, e_n\right\}
    \end{equation}
     be a linear basis of $\operatorname{ran}\Delta_\mathcal{H}.$ Without loss of generality, suppose that
      \begin{equation}\label{bianhao8}
B=\left\{e_{1}, \cdots, e_{k}\right\}
    \end{equation}
    is a maximal linearly independent set of $\left\{e_1, \cdots, e_n\right\}$ in the module $\mathbb{M}_{\mathcal{H}}.$
    Then for every $k+1\leq i \leq n,$ $\{e_1,\cdots,e_k,e_i\}$ is linearly dependent in the module $\mathbb{M}_{\mathcal{H}},$ hence there exist ploynomials $p_{1,i },\cdots, p_{k,i }$ and nonzero polynomial $p_{i,i }$
    such that \begin{equation}\label{baohanB}
\sum\limits_{j=1}^{k} p_{j,i } e_j+ p_{i,i } e_i=0.
    \end{equation}
    Write $$\mathfrak{C}=\{\text{deg}\, p_{j,i } : 1\leq j\leq k, k+1\leq i \leq n\}\cup \{\text{deg}\, p_{i,i }:  k+1\leq i \leq n\},$$
    and \begin{equation}\label{n_1}
n_1=\max \{r: r\in \mathfrak{C}\}.
    \end{equation}
  For $m>n_1$, $k+1\leq i \leq n,$ set $$
S_m=\left\{f \in \mathcal{P}_d: \operatorname{deg} f \leq m\right\} ~ \text{and}~ S_{m, i}=\left\{f \cdot p_{i,i } : f \in \mathcal{P}_d, \operatorname{deg} f \leq m-n_1\right\}.$$
It is evident that $S_{m, i}$ is a subspace of $S_m$. We can thus have a subspace, denoted by $S_{m, i}^{\prime}$, such that
\begin{equation}\label{S_m}
S_m = S_{m, i}^{\prime} \oplus S_{m, i}.
\end{equation}
For $m\geq 1,$ set
\begin{equation}\label{qd}
q_m(x)=\frac{(x+1)(x+2)\cdots(x+m)}{m!}.
\end{equation}
To prove \eqref{oula}, we need the following two lemmas.
\begin{lem}\label{Lemma 3.3}
For $k+1\leq i \leq n,$
$$ \lim\limits_{m\rightarrow \infty}\frac{\operatorname{dim}\left\{f \cdot e_i : f\in S_{m, i}^{\prime}\right\}}{q_d(m)} =0.$$
\end{lem}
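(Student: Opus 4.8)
The plan is to bound the numerator by $\dim S_{m,i}^{\prime}$ and then evaluate that dimension exactly. Since the map $f\mapsto f\cdot e_i$ is $\mathbb{C}$-linear, the image $\{f\cdot e_i:f\in S_{m,i}^{\prime}\}$ has dimension at most $\dim S_{m,i}^{\prime}$, so it suffices to prove
$$\lim_{m\to\infty}\frac{\dim S_{m,i}^{\prime}}{q_d(m)}=0.$$
From the splitting $S_m=S_{m,i}^{\prime}\oplus S_{m,i}$ in \eqref{S_m} one has $\dim S_{m,i}^{\prime}=\dim S_m-\dim S_{m,i}$ for every $m>n_1$, so the task reduces to identifying the two dimensions on the right.

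First, $S_m$ is the space of polynomials in $d$ variables of degree at most $m$, whose dimension equals the number of monomials of degree at most $m$, namely $\binom{m+d}{d}=q_d(m)$ by \eqref{qd}. Next, because $p_{i,i}\neq 0$ and $\mathcal{P}_d$ is an integral domain, multiplication by $p_{i,i}$ is injective and additive on degrees; as $\deg p_{i,i}\le n_1$ by \eqref{n_1}, the map $f\mapsto f\cdot p_{i,i}$ carries $\{f\in\mathcal{P}_d:\deg f\le m-n_1\}$ bijectively onto $S_{m,i}$, which simultaneously confirms $S_{m,i}\subseteq S_m$ and gives $\dim S_{m,i}=q_d(m-n_1)$. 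Hence $\dim S_{m,i}^{\prime}=q_d(m)-q_d(m-n_1)$.

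Finally, $q_d$ is a polynomial in its argument of degree $d$ with leading coefficient $1/d!$, so the difference $q_d(m)-q_d(m-n_1)$ is a polynomial in $m$ of degree at most $d-1$, since the top-degree terms cancel. Dividing a polynomial of degree at most $d-1$ by $q_d(m)$, which has degree $d$ in $m$, yields a quotient tending to $0$ as $m\to\infty$, which is precisely the assertion. The argument is essentially routine; the only step needing care is the exact value $\dim S_{m,i}=q_d(m-n_1)$, which rests on the injectivity of multiplication by the nonzero polynomial $p_{i,i}$ in the integral domain $\mathcal{P}_d$ together with the degree bound $\deg p_{i,i}\le n_1$ ensuring $S_{m,i}\subseteq S_m$.
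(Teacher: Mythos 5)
Your proof is correct and follows essentially the same route as the paper: bound the image dimension by $\dim S_{m,i}^{\prime}=\dim S_m-\dim S_{m,i}=q_d(m)-q_d(m-n_1)$ via the splitting \eqref{S_m}, then observe that this difference has degree at most $d-1$ in $m$ while $q_d(m)$ has degree $d$. The only difference is that you explicitly justify $\dim S_{m,i}=q_d(m-n_1)$ by the injectivity of multiplication by the nonzero polynomial $p_{i,i}$ in the integral domain $\mathcal{P}_d$, a detail the paper leaves implicit.
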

\begin{proof}
For $m>n_1$, by \eqref{S_m},
 $$\operatorname{dim} S_{m, i}^{\prime}=\operatorname{dim} S_m-\operatorname{dim} S_{m, i}=q_d(m)-q_d\left(m-n_1\right).$$
Therefore
$$
\begin{array}{rlrl}
&\operatorname{dim}\left\{f \cdot e_i : f\in S_{m, i}^{\prime}\right\}
\leq \operatorname{dim} S_{m, i}^{\prime} = q_d(m)-q_d\left(m-n_1\right).
\end{array}
$$
 By the definition of $q_d(x)$ \eqref{qd}, there is a positive constant  $C_d$ such that
$$
\operatorname{dim}\left\{f \cdot e_i : f\in S_{m, i}^{\prime}\right\}  \leq q_d(m)-q_d\left(m-n_1\right)\leq C_d m^{d-1}.
$$
This implies that
$$\lim _{m \rightarrow \infty} \frac{\operatorname{dim}\operatorname{span}\left\{f \cdot e_i : f\in S_{m, i}^{\prime}\right\} }{q_d(m)} =0.$$
\end{proof}
 For any $\alpha = (\alpha_1, \alpha_2, \cdots, \alpha_d) \in \mathbb{N}^d $, set
 $$|\alpha|=\sum\limits_{i=1}^{d}\alpha_i, \quad \alpha!=\alpha_1! \cdots \alpha_d!,$$
 and $$z^\alpha = z_1^{\alpha_1} z_2^{\alpha_2} \cdots z_d^{\alpha_d}, \quad z \in \mathbb{B}_d.$$
\begin{lem}\label{Lemma 3.4}For $m\geq 1,$
$$\frac{\operatorname{dim} \operatorname{span}\left\{f \cdot \zeta : f \in \mathcal{P}_d, \operatorname{deg} f \leq m, \zeta \in B\right\}}{q_d(m)} =k.$$
\end{lem}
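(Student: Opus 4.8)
The plan is to prove the sharper statement that the numerator equals \emph{exactly} $k\,q_d(m)$ for every $m\ge 1$, after which dividing by $q_d(m)$ produces the constant $k$. First I would record the elementary counting identity $\dim S_m=q_d(m)$. By \eqref{qd} one has $q_d(m)=\frac{(m+1)(m+2)\cdots(m+d)}{d!}=\binom{m+d}{d}$, and this is precisely the number of monomials $z^\alpha$ in $d$ variables with $|\alpha|\le m$. Since those monomials form a linear basis of $S_m=\{f\in\mathcal P_d:\operatorname{deg}f\le m\}$, we get $\dim S_m=q_d(m)$.

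The key step is to upgrade the module-theoretic linear independence of $B=\{e_1,\dots,e_k\}$ in $\mathbb M_{\mathcal H}$ into ordinary linear independence of the family $\{z^\alpha\cdot e_j:|\alpha|\le m,\ 1\le j\le k\}$ as vectors in $\mathcal H$. Suppose a vanishing linear combination $\sum_{j=1}^k\sum_{|\alpha|\le m}c_{j,\alpha}\,z^\alpha\cdot e_j=0$ is given, and set $p_j=\sum_{|\alpha|\le m}c_{j,\alpha}z^\alpha\in\mathcal P_d$; then $\sum_{j=1}^k p_j\cdot e_j=0$. Because $B$ is linearly independent in the module $\mathbb M_{\mathcal H}$ in the sense of the Notation preceding Lemma \ref{lemF} (as fixed in \eqref{bianhao8}), each $p_j=0$, and hence every coefficient $c_{j,\alpha}$ vanishes. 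Thus the $k\,q_d(m)$ vectors $\{z^\alpha\cdot e_j\}$ are linearly independent; as they plainly span $\operatorname{span}\{f\cdot\zeta:f\in\mathcal P_d,\ \operatorname{deg}f\le m,\ \zeta\in B\}$, the numerator is exactly $k\,q_d(m)$.

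Dividing by $q_d(m)$ then yields the asserted value $k$, uniformly in $m\ge 1$. The argument is essentially bookkeeping once module-independence is invoked, so I do not anticipate a genuine obstacle; the only point demanding a little care is the clean passage from the single module relation $\sum_j p_j\cdot e_j=0$ to the scalar relations $c_{j,\alpha}=0$, which rests entirely on the hypothesis that $B$ is a maximal linearly independent set in $\mathbb M_{\mathcal H}$.
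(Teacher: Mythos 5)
Your proposal is correct and follows essentially the same route as the paper's own proof: both reduce the claim to showing that the $k\,q_d(m)$ vectors $\{z^\alpha\cdot e_j : |\alpha|\le m,\ 1\le j\le k\}$ are linearly independent, by collecting coefficients into polynomials $p_j$ with $\sum_j p_j\cdot e_j=0$, invoking the module-theoretic linear independence of $B$ to get $p_j=0$, and then using linear independence of monomials to kill the scalar coefficients. The only cosmetic difference is that the paper phrases the count via the cardinality of the monomial set $F_m$ rather than via $\dim S_m=\binom{m+d}{d}$.
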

\begin{proof}
Set $$
F_m=\{z^{\alpha} : |\alpha| \leq m\},
$$
and
$$
B_m=\left\{f \cdot e : f \in F_m, e\in B\right\}.
$$
It suffices to show that $B_m$ is a linear basis of $\operatorname{span}\left\{f \cdot \zeta : f \in \mathcal{P}_d, \operatorname{deg} f \leq m, \zeta \in B\right\}.$
In fact, obviously
$$\operatorname{span} B_m=\operatorname{span}\left\{f \cdot \zeta : f \in \mathcal{P}_d, \operatorname{deg} f \leq m, \zeta \in B\right\}.$$
We only need to prove $B_m$ is linearly independent.
Write $$B_m=\{f_j \cdot e_i: 1\leq j\leq ^\sharp F_m, 1 \leq i \leq k\},$$
where $^\sharp F_m$ is the number of the elements in $F_m.$
Assume that for $\lambda_{i,j}\in \mathbb{C},$
$$\sum\limits_{i=1}^{k}\sum\limits_{j=1}^{^\sharp F_m}\lambda_{i,j} f_j e_i=0,$$ then from the fact that $B$
 is a maximal linearly independent set of $\left\{e_1, \cdots, e_n\right\}$ in the module $\mathbb{M}_{\mathcal{H}},$
 $\sum\limits_{j=1}^{^\sharp F_m}\lambda_{i,j} f_j =0$ for $1\leq i\leq k.$
  Notice that $F_m$ is linearly independent,
therefore $\lambda_{i,j}=0.$

It is easy to see that $^\sharp F_m= q_d(m)$ and $^\sharp B_m=k q_d(m).$
Then
$$
\operatorname{dim} \operatorname{span}\left\{f \cdot \zeta : f \in \mathcal{P}_d, \operatorname{deg} f \leq m, \zeta \in B\right\}= ^\sharp B_m=k q_d(m),
$$
which implies that
$$\frac{\operatorname{dim} \operatorname{span}\left\{f \cdot \zeta : f \in \mathcal{P}_d, \operatorname{deg} f \leq m, \zeta \in B\right\}}{q_d(m)} =k.$$
\end{proof}

Under the above preparations, we can prove \eqref{oula}.

\begin{prop}\label{fulu1}
Let \( \mathcal{H} \) be a \( d \)-contractive Hilbert module of finite rank, then
\begin{equation}
\chi(\mathcal{H}) = \lim_{m \rightarrow \infty} \frac{\operatorname{dim} \mathbb{M}_m}{q_d(m)}=d! \lim_{m \rightarrow \infty} \frac{\operatorname{dim} \mathbb{M}_m}{m^d},
\end{equation}
where
\[
\mathbb{M}_m = \operatorname{span}\{ f \cdot \zeta : f \in \mathcal{P}_d, \operatorname{deg} f \leq m, \zeta \in \operatorname{ran} \Delta_\mathcal{H} \}.
\]
\end{prop}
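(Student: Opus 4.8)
The plan is to identify the limit with $\operatorname{rank}(\mathbb{M}_{\mathcal H})$, which equals $\chi(\mathcal H)$ by \eqref{oulashu} and which, by Lemma \ref{lemF}, equals the cardinality $k$ of the maximal linearly independent set $B$ in \eqref{bianhao8}. Concretely, I will prove $\lim_{m\to\infty}\operatorname{dim}\mathbb{M}_m/q_d(m)=k$ by establishing matching lower and upper bounds, and then pass from $q_d(m)$ to $m^d$ using the asymptotics $q_d(m)\sim m^d/d!$ read off directly from \eqref{qd}.

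For the lower bound, since $B\subseteq A$, the subspace $\operatorname{span}\{f\cdot\zeta:\operatorname{deg}f\le m,\ \zeta\in B\}$ is contained in $\mathbb{M}_m$. By Lemma \ref{Lemma 3.4} its dimension is exactly $k\,q_d(m)$, so $\operatorname{dim}\mathbb{M}_m\ge k\,q_d(m)$ and hence $\liminf_{m\to\infty}\operatorname{dim}\mathbb{M}_m/q_d(m)\ge k$.

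For the upper bound, I will use the dependence relations \eqref{baohanB} to absorb the contribution of the extra generators $e_{k+1},\dots,e_n$ into the span of $B$, up to a lower-order error. Write $\mathbb{M}_m$ as the sum of $\operatorname{span}\{f\cdot e_j:\operatorname{deg}f\le m,\ 1\le j\le k\}$ together with $\sum_{i=k+1}^{n}\operatorname{span}\{f\cdot e_i:\operatorname{deg}f\le m\}$, and for each $i$ decompose the degree-$m$ space as $S_m=S_{m,i}^{\prime}\oplus S_{m,i}$ as in \eqref{S_m}. If $f\in S_{m,i}$, say $f=g\cdot p_{i,i}$ with $\operatorname{deg}g\le m-n_1$, then \eqref{baohanB} gives $f\cdot e_i=-\sum_{j=1}^{k}(g\,p_{j,i})\cdot e_j$; since $\operatorname{deg}(g\,p_{j,i})\le m$ by the choice of $n_1$ in \eqref{n_1}, these vectors lie in $\operatorname{span}\{h\cdot e_j:\operatorname{deg}h\le m,\ 1\le j\le k\}$. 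Thus only the $S_{m,i}^{\prime}$ parts can produce genuinely new vectors, so
\begin{equation*}
\operatorname{dim}\mathbb{M}_m\le k\,q_d(m)+\sum_{i=k+1}^{n}\operatorname{dim}\operatorname{span}\{f\cdot e_i:f\in S_{m,i}^{\prime}\}.
\end{equation*}
Dividing by $q_d(m)$, letting $m\to\infty$, and applying Lemma \ref{Lemma 3.3} to each of the finitely many terms $i=k+1,\dots,n$ yields $\limsup_{m\to\infty}\operatorname{dim}\mathbb{M}_m/q_d(m)\le k$. Together with the lower bound this gives $\lim_{m\to\infty}\operatorname{dim}\mathbb{M}_m/q_d(m)=k=\chi(\mathcal H)$, and the second equality of the proposition follows from $q_d(m)/m^d\to 1/d!$.

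The main obstacle is the upper bound: the subtlety is that the generators $e_{k+1},\dots,e_n$ enlarge the spanning family yet contribute only an $o(q_d(m))$ correction to the dimension count. This is precisely the content of the degree-truncation estimate in Lemma \ref{Lemma 3.3}, whose proof rests on the polynomial-growth gap $q_d(m)-q_d(m-n_1)=O(m^{d-1})$. The delicate bookkeeping is ensuring that the degrees stay within the truncation so that the relations \eqref{baohanB} remain applicable inside $S_m$, which is exactly what the definition of $n_1$ in \eqref{n_1} guarantees.
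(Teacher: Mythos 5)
Your proposal is correct and follows essentially the same route as the paper's own proof: both identify $\chi(\mathcal H)=\operatorname{rank}(\mathbb{M}_{\mathcal H})=k$ via \eqref{oulashu} and Lemma \ref{lemF}, use Lemma \ref{Lemma 3.4} to get the exact dimension $k\,q_d(m)$ of the span over $B$, absorb the $S_{m,i}$ contributions of $e_{k+1},\dots,e_n$ into that span through the relations \eqref{baohanB} with the degree bound \eqref{n_1}, and control the remaining $S_{m,i}^{\prime}$ pieces by Lemma \ref{Lemma 3.3}. The only cosmetic difference is that you phrase the sandwich as separate $\liminf$/$\limsup$ bounds, while the paper writes it as a single two-sided dimension inequality before taking the limit.
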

\begin{proof} By \eqref{oulashu},
$$
\chi(\mathcal{H})=\operatorname{rank}\mathbb{M}_{\mathcal{H}}.
$$
Notice that $\{e_i\}_{i=1}^{n}$ generates $\mathbb{M}_{\mathcal{H}},$ then by Lemma \ref{lemF},
$$
\chi(\mathcal{H})=\operatorname{rank}(\mathbb{M}_{\mathcal{H}})=k.
$$
Hence it suffices to prove that
$$
\lim _{m \rightarrow \infty} \frac{\operatorname{dim} \mathbb{M}_m}{q_d(m)}=k.
$$
Set $$V_i=\left\{f \cdot e_i : f \in \mathcal{P}_d, \operatorname{deg} f \leq m \right\},\quad k+1 \leq i \leq n.$$
For $k+1\leq i \leq n,$
by \eqref{baohanB} and \eqref{n_1}, we have
 $$\left\{f p_{i,i }\cdot e_i : f \in \mathcal{P}_d, \operatorname{deg} f \leq m-n_1\right\} \subseteq \operatorname{span} \left\{f \cdot \zeta : f \in \mathcal{P}_d, \operatorname{deg} f \leq m, \zeta \in B\right\},$$
then
$$
\begin{aligned}
V_i&
=\left\{f \cdot e_i : f \in S_ m \right\}\\&=
\left\{f \cdot e_i : f \in S_{m, i} \oplus S_{m, i}^{\prime} \right\}\\&=\left\{f \cdot e_i : f \in  S_{m, i}\right\}+\left\{f \cdot e_i : f \in S_{m, i}^{\prime}\right\}
\\&=\left\{f p_{i,i }\cdot e_i : f \in \mathcal{P}_d, \operatorname{deg} f \leq m-n_1\right\}+\left\{f \cdot e_i : f \in  S_{m, i}^{\prime}\right\}
\\& \subseteq \operatorname{span} \left\{f \cdot \zeta : f \in \mathcal{P}_d, \operatorname{deg} f \leq m, \zeta \in B\right\}+\left\{f \cdot e_i : f \in  S_{m, i}^{\prime}\right\}.
\end{aligned}
$$
Hence
$$
\begin{aligned}
 & \operatorname{span}\left\{f \cdot \zeta : f \in \mathcal{P}_d, \operatorname{deg} f \leq m, \zeta \in \operatorname{ran}\Delta_\mathcal{H}\right\}
 \\&=   \operatorname{span}\left\{f \cdot \zeta : f \in \mathcal{P}_d, \operatorname{deg} f \leq m, \zeta \in A\right\}
\\
& = \operatorname{span}\left\{f \cdot \zeta : f \in \mathcal{P}_d, \operatorname{deg} f \leq m, \zeta \in B\right\}+V_{k+1}+\cdots+V_n\\&
 \subseteq \operatorname{span} \left\{f \cdot \zeta : f \in \mathcal{P}_d, \operatorname{deg} f \leq m, \zeta \in B\right\}+\left\{f \cdot e_{k+1} : f \in  S_{m, k+1}^{\prime}\right\}+\cdots+\left\{f \cdot e_{n} : f \in  S_{m,n}^{\prime}\right\}.
\end{aligned}
$$
Therefore
$$
\begin{aligned}\label{rlc}
\operatorname{dim} \operatorname{span}&\left\{f \cdot \zeta : f \in \mathcal{P}_d, \operatorname{deg} f \leq m, \zeta \in B\right\}\\&\leq \operatorname{dim} \mathbb{M}_m  \\&=  \operatorname{dim} \operatorname{span}\left\{f \cdot \zeta : f \in \mathcal{P}_d, \operatorname{deg} f \leq m, \zeta \in \operatorname{ran}\Delta_\mathcal{H}\right\}
\\
& \leq  \operatorname{dim} \operatorname{span}\left\{f \cdot \zeta : f \in \mathcal{P}_d, \operatorname{deg} f \leq m, \zeta \in B\right\}+\sum\limits_{i=k+1}^n \operatorname{dim} \left\{f \cdot e_i : f \in  S_{m, i}^{\prime}\right\}.
\end{aligned}
$$
By Lemma \ref{Lemma 3.3} and Lemma \ref{Lemma 3.4}, we have
$$
\lim _{m \rightarrow \infty} \frac{\operatorname{dim} \mathbb{M}_m}{q_d(m)}=k.
$$
\end{proof}

Next, we will prove Proposition \ref{2.151}. The ideas and techniques come from \cite{Arveson curvature}.
 Let $\mathcal{H}$ be an $\omega$-contractive Hilbert module of finite rank.
For simplicity, write $R=\text{ran}\,\Delta_\mathcal{H}.$
Let $H^2(\partial \mathbb{B}_m)$ be the Hardy spaces on the unit ball, and $P_{H_m^2 \otimes R}$ be the projection from $H^2 \otimes R$ onto $H_m^2 \otimes R.$
Considering the linear operator $A_m: E \rightarrow H^2\left(\partial\mathbb{B}_m\right) \otimes R$ defined by
$$A_m \zeta=b_m P_{H_m^2 \otimes R} \Phi(1 \otimes \zeta), \quad \zeta \in E,$$
where $\Phi$ is defined in \eqref{fuluxuyao}
and $$b_m: H_m^2 \otimes R \rightarrow H^2\left(\partial\mathbb{B}_m\right) \otimes R$$ is the natural inclusion mapping.
To continue, we need some lemmas.
\begin{lem}\label{lemma74}
$$
\begin{aligned}
K_m(\mathcal{H})=\operatorname{rank} \mathcal{H}-\operatorname{trace}\left(A_m A_m^*\right).
\end{aligned}
$$
\end{lem}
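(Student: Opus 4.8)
The plan is to identify the Hilbert-Schmidt norm of $A_m$ with an integral over $\partial\mathbb{B}_m$, and then to subtract it from $\operatorname{rank}\mathcal{H}$ using the integral representation of $K_m(\mathcal{H})$ already recorded in Lemma \ref{lem3.6}. Since $\mathcal{H}$ has finite rank, $R=\operatorname{ran}\Delta_\mathcal{H}$ is finite dimensional, so $\operatorname{rank}\mathcal{H}=\dim R=\operatorname{trace}(I_R)$, and the factorization behind Lemma \ref{lem3.6} (see also Lemma \ref{thm5.9}) gives
$$K_m(\mathcal{H})=\int_{\partial\mathbb{B}_m}\operatorname{trace}\bigl(I_R-\Phi(z^{(m)})\Phi(z^{(m)})^*\bigr)\,d\sigma_m.$$
Because $\sigma_m$ is a probability measure, it therefore suffices to establish the single identity $\operatorname{trace}(A_mA_m^*)=\int_{\partial\mathbb{B}_m}\operatorname{trace}(\Phi(z^{(m)})\Phi(z^{(m)})^*)\,d\sigma_m$, after which the two integrands differ by the constant $\operatorname{trace}(I_R)$ and the claim follows by subtraction.

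First I would unwind the definition of $A_m$. Fix $\zeta\in E$ and set $g_\zeta=P_{H_m^2\otimes R}\,\Phi(1\otimes\zeta)$. By the Riesz-lemma definition of the operator-valued symbol in Section 3 one has $\Phi(1\otimes\zeta)(z)=\Phi(z)\zeta$ for $z\in\mathbb{B}$; applying $P_{H_m^2\otimes R}$ retains exactly the monomials in $z_1,\dots,z_m$, so $g_\zeta(z^{(m)})=\Phi(z^{(m)})\zeta$ for every $z^{(m)}\in\mathbb{B}_m$ (identifying $z^{(m)}$ with $(z_1,\dots,z_m,0,0,\dots)$). The inclusion $b_m$ sends $g_\zeta$ into the Hardy module $H^2(\partial\mathbb{B}_m)\otimes R$, whose norm is precisely the $L^2(\partial\mathbb{B}_m,\sigma_m)$ norm of boundary values; since the boundary value of $g_\zeta$ agrees almost everywhere with the boundary value function $\Phi(z^{(m)})\zeta$ furnished by Proposition \ref{KH}, this yields $\|A_m\zeta\|^2=\int_{\partial\mathbb{B}_m}\|\Phi(z^{(m)})\zeta\|_R^2\,d\sigma_m$. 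Summing over an orthonormal basis $\{\zeta_j\}$ of $E$ and interchanging sum and integral by Tonelli's theorem (all terms are nonnegative) gives
$$\operatorname{trace}(A_mA_m^*)=\operatorname{trace}(A_m^*A_m)=\sum_j\|A_m\zeta_j\|^2=\int_{\partial\mathbb{B}_m}\operatorname{trace}\bigl(\Phi(z^{(m)})\Phi(z^{(m)})^*\bigr)\,d\sigma_m,$$
where the last equality uses $\sum_j\|\Phi(z^{(m)})\zeta_j\|^2=\operatorname{trace}(\Phi(z^{(m)})^*\Phi(z^{(m)}))=\operatorname{trace}(\Phi(z^{(m)})\Phi(z^{(m)})^*)$, legitimate because $R$ is finite dimensional. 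The resulting bound $\operatorname{trace}(A_mA_m^*)\le\dim R$ also shows $A_m$ is Hilbert-Schmidt, so every trace above is well defined.

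The main obstacle is the boundary-value identification in the middle step. Two points need care: that the Hardy-space norm on $H^2(\partial\mathbb{B}_m)\otimes R$ really is the $L^2$ norm of radial boundary values (so that $b_m$, though not isometric from the Drury-Arveson norm, reproduces exactly the integral we want), and that the boundary values of the truncated holomorphic function $g_\zeta$ coincide almost everywhere with the $K$-limit operator-valued function $\Phi(z^{(m)})$ occurring in Proposition \ref{KH} and in Lemma \ref{lem3.6}. Both are limiting statements about the same holomorphic map $z^{(m)}\mapsto\Phi(z^{(m)})\zeta$ on $\mathbb{B}_m$, and once one verifies that the various almost-everywhere boundary limits (radial, $K$-limit, Hardy) agree, the rest is the routine trace manipulation above. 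After that, no input beyond Lemma \ref{lem3.6} is required.
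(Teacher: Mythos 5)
Your proposal is correct and follows essentially the same route as the paper's proof: both identify $A_m\zeta$ with the function $z^{(m)}\mapsto\Phi(z^{(m)})\zeta$ (you by evaluating the power series at $(z_1,\dots,z_m,0,\dots)$, the paper by pairing against $\mathcal{K}_{z^{(m)}}\otimes\zeta_2$), then compute $\operatorname{trace}(A_mA_m^*)=\operatorname{trace}(A_m^*A_m)=\int_{\partial\mathbb{B}_m}\operatorname{trace}\bigl(\Phi(z^{(m)})\Phi(z^{(m)})^*\bigr)\,d\sigma_m$ by summing over an orthonormal basis of $E$, and finally subtract from the integral representation $K_m(\mathcal{H})=\int_{\partial\mathbb{B}_m}\operatorname{trace}\bigl(I_R-\Phi(z^{(m)})\Phi(z^{(m)})^*\bigr)\,d\sigma_m$ coming from the factorization of Lemma \ref{thm5.9} together with \eqref{mudi3} and \eqref{fulu4.9}. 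Your explicit attention to Tonelli and to reconciling the radial, $K$-limit, and Hardy boundary values is a point the paper passes over silently, but it is a refinement of, not a departure from, the same argument.
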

\begin{proof}   Let $\hat{\Phi}=P_{H_m^2 \otimes R} \Phi P_{H_m^2 \otimes E}.$ Then for every $\zeta_1 \in E, \zeta_2 \in R, z^{(m)} \in \mathbb{B}_m$,
$$
\begin{array}{rlc}
\langle\hat{\Phi}(z^{(m)}) \zeta_1, \zeta_2\rangle & =  \langle\hat{\Phi}\left(1 \otimes \zeta_1\right), \mathcal{K}_{z^{(m)}} \otimes \zeta_2\rangle \\
 & =\langle P_{H_m^2 \otimes R} \Phi P_{H_m^2 \otimes E}\left(1 \otimes \zeta_1\right),  \mathcal{K}_{z^{(m)}} \otimes \zeta_2\rangle \\
 & =\langle\Phi(1 \otimes \zeta_1),  \mathcal{K}_{z^{(m)}} \otimes \zeta_2\rangle \\
& =\langle\Phi( z^{(m)}) \zeta_1, \zeta_2\rangle.
\end{array}
$$
So
$$
A_m \zeta(z^{(m)})=\hat{\Phi}(1 \otimes \zeta)(z^{(m)})=\hat{\Phi}(z^{(m)}) \zeta=\Phi( z^{(m)}) \zeta,
$$
and for almost $z^{(m)}\in \partial \mathbb{B}_m,$
$$
\begin{aligned}
\operatorname{trace}(\Phi( z^{(m)}) \Phi( z^{(m)})^*)=\operatorname{trace}(\Phi( z^{(m)})^* \Phi( z^{(m)}))=\sum\limits_{n=1}^{\infty}\|\Phi( z^{(m)}) \mathfrak{e}_n\|^2=\sum\limits_{n=1}^{\infty}\|A_m \mathfrak{e}_n( z_1,\cdots,z_m)\|^2,
\end{aligned}
$$
where $\{\mathfrak{e}_n\}_{i=1}^{\infty}$ is a basis of $E.$
Integrating over $\partial \mathbb{B}_m,$ we have
$$
\begin{aligned}
\int_{\partial \mathbb{B}_m} \operatorname{trace}(\Phi( z^{(m)}) \Phi( z^{(m)})^*)d \sigma_m&=\sum\limits_{n=1}^{\infty} \int_{\partial \mathbb{B}_m}\left\|A_m \mathfrak{e}_n(z^{(m)})\right\|^2 d \sigma_m\\&=\sum\limits_{n=1}^{\infty}\left\|A_m \mathfrak{e}_n\right\|_{H^2\left(\partial \mathbb{B}_m\right) \otimes R}^2
\\&=\operatorname{trace}\left(A_m^* A_m\right)\\&=\operatorname{trace}\left(A_m A_m^*\right).
\end{aligned}
$$
Then by \eqref{mudi2}, \eqref{mudi3} and \eqref{fulu4.9},
 $$
\begin{aligned}
K_{m}(\mathcal{H})=\int_{\partial\mathbb{ B}_{m}} \operatorname{trace}( I_{R}-\Phi( z^{(m)}) \Phi( z^{(m)})^{*}) d\sigma_m=\operatorname{rank} \mathcal{H}-\operatorname{trace}\left(A_m A_m^*\right).
\end{aligned}
$$
The proof is complete.
\end{proof}
Now, we define a linear map $\Gamma_m:\mathcal{B}(H_m^2 \otimes R)\rightarrow \mathcal{B}(H^2(\partial\mathbb{B}_m) \otimes R)$ as follows:
$$\Gamma_m(X)=b_m X b_m^*.$$
Let $Z_1,\cdots,Z_m$ be the canonical operators of the Hardy module $H^2(\partial\mathbb{B}_m) \otimes R.$
The linear map $d\Gamma_m:\mathcal{B}(H_m^2 \otimes R)\rightarrow \mathcal{B}(H^2(\partial\mathbb{B}_m) \otimes R)$ is defined
as follows:
$$d\Gamma_m(X)=\Gamma_m(X)-\sum\limits_{k=1}^{m}Z_k\Gamma_m(X)Z_k^*.$$
\begin{lem}\label{Ad}
$$
d \Gamma_m\left(P_{H_m^2 \otimes R} \Phi \Phi^* P_{H_m^2 \otimes R}\right)=A_m A_m^*
$$
\end{lem}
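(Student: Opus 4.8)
The plan is to transport everything onto $H_m^2\otimes R$, reduce the claim to an identity for the module map $\Phi$, and then use that the coordinate multipliers on $H_m^2$ satisfy the finitely-many-variables Drury--Arveson relation. Write $P=P_{H_m^2\otimes R}$, $\Lambda=P\Phi\Phi^*P$, and let $\iota\colon E\to H^2\otimes E$ be the isometry $\iota\zeta=1\otimes\zeta$, so that $A_m=b_mP\Phi\iota$ and, since $\iota\iota^*=E_0\otimes I_E$,
\[
A_mA_m^*=b_mP\Phi(E_0\otimes I_E)\Phi^*Pb_m^*.
\]
First I would record that the inclusion intertwines the coordinate multipliers, $Z_kb_m=b_mM_k$ for $1\le k\le m$ (both sides are multiplication by $z_k$), whence $b_m^*Z_k^*=M_k^*b_m^*$. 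Substituting these into $d\Gamma_m(\Lambda)=b_m\Lambda b_m^*-\sum_{k=1}^mZ_kb_m\Lambda b_m^*Z_k^*$ collapses all the Hardy operators onto $H_m^2\otimes R$ and gives
\[
d\Gamma_m(\Lambda)=b_m\Bigl(\Lambda-\sum_{k=1}^mM_k\Lambda M_k^*\Bigr)b_m^*,
\]
where now all $M_k$ act on $H_m^2\otimes R$.

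Next I would simplify $\Lambda-\sum_{k=1}^mM_k\Lambda M_k^*$. Since $H_m^2$ is a reducing subspace for each $M_k$ with $k\le m$, the projection $P$ commutes with $M_k$ and $M_k^*$; combining this with the homomorphism identities $M_k\Phi=\Phi M_k$ and $\Phi^*M_k^*=M_k^*\Phi^*$ furnished by Lemma \ref{lem2.7} lets me pull every $M_k$ through $\Phi$ and $\Phi^*$, yielding
\[
\Lambda-\sum_{k=1}^mM_k\Lambda M_k^*=P\Phi\Bigl(I-\bigl(\sum_{k=1}^mM_kM_k^*\bigr)\otimes I_E\Bigr)\Phi^*P.
\]
Here lies the main difficulty: on the whole space $H^2$ one does \emph{not} have $I-\sum_{k=1}^mM_kM_k^*=E_0$. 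Indeed this operator sends a monomial $z^\alpha$ to $\bigl((\alpha_{m+1}+\alpha_{m+2}+\cdots)/|\alpha|\bigr)z^\alpha$, so it vanishes exactly on $H_m^2$ and is strictly positive off it. Thus to finish I must show that the factor $\Phi^*Pb_m^*$ already takes values in $H_m^2\otimes E$.

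This invariance is the heart of the matter, and I would prove it via the characterization that a vector $h\in H^2\otimes R$ (resp. $H^2\otimes E$) lies in $H_m^2\otimes R$ (resp. $H_m^2\otimes E$) if and only if $M_j^*h=0$ for all $j>m$. For $g\in H_m^2\otimes R$ and $j>m$ we have $M_j^*g=0$, so the homomorphism relation $\Phi^*M_j^*=M_j^*\Phi^*$ gives $M_j^*\Phi^*g=\Phi^*M_j^*g=0$ for every $j>m$, which means $\Phi^*g\in H_m^2\otimes E$. Since $Pb_m^*$ takes values in $H_m^2\otimes R$, it follows that $\Phi^*Pb_m^*$ takes values in $H_m^2\otimes E$. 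On $H_m^2\otimes E$ the finitely-many-variables relation $\sum_{k=1}^mM_kM_k^*=I-E_0$ holds, so $I-\bigl(\sum_{k=1}^mM_kM_k^*\bigr)\otimes I_E$ restricts there to $E_0\otimes I_E$. Consequently the middle factor may be replaced by $E_0\otimes I_E$ without changing $P\Phi(\,\cdot\,)\Phi^*Pb_m^*$, and we obtain
\[
d\Gamma_m(\Lambda)=b_mP\Phi(E_0\otimes I_E)\Phi^*Pb_m^*=A_mA_m^*,
\]
as desired. Once the invariance $\Phi^*(H_m^2\otimes R)\subseteq H_m^2\otimes E$ is in hand, the remaining manipulations are routine intertwining computations; that invariance, forced by $\Phi$ being a module map together with the coordinate description of $H_m^2$, is the only substantive step.
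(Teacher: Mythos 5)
Your proof is correct, and it is essentially the paper's own computation run in the opposite direction: both arguments rest on the factorization $A_mA_m^*=b_mP_{H_m^2\otimes R}\Phi(E_0\otimes I_E)\Phi^*P_{H_m^2\otimes R}b_m^*$ (which you obtain more cleanly from $\iota\iota^*=E_0\otimes I_E$ than the paper does via an inner-product computation), together with trading $E_0\otimes I_E$ against $I-\sum_k (M_{z_k}\otimes I_E)(M_{z_k}\otimes I_E)^*$ by pulling the coordinate multipliers through the module map $\Phi$ and the reducing projection. The invariance $\Phi^*(H_m^2\otimes R)\subseteq H_m^2\otimes E$ that you isolate as the heart of the matter is exactly the adjoint form of the fact the paper uses implicitly, namely $P_{H_m^2\otimes R}(M_{z_i}\otimes I_R)\Phi=0$ for $i>m$, which is how the paper discards the tail terms after expanding $E_0\otimes I_E=I-\sum_{i=1}^{\infty}(M_{z_i}\otimes I_E)(M_{z_i}\otimes I_E)^*$ over all infinitely many variables, whereas you instead invoke the finite-variable identity $\sum_{k=1}^m M_{z_k}M_{z_k}^*=I-E_0$ on $H_m^2$.
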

\begin{proof}
Notice that for $\xi \in H^2\left(\partial \mathbb{B}_m\right) \otimes R, \,\eta \in E,$
$$
\begin{aligned}
\langle1 \otimes A_m^* \xi, 1 \otimes \eta\rangle= & \langle A_m^* \xi, \eta\rangle \\
= & \langle\xi, A_m \eta\rangle \\
= & \langle\xi, b_m P_{H_m^2 \otimes R} \Phi P_{H_m^2 \otimes E}(1 \otimes \eta)\rangle \\
= &\langle P_{H_m^2 \otimes E} \Phi^* P_{H_m^2 \otimes R} b_m{ }^* \xi, 1 \otimes \eta\rangle.
\end{aligned}
$$
It follows that
$$
\begin{aligned}
\langle A_m A_m^* \xi, \eta\rangle= & \langle1 \otimes A_m^* \xi, 1 \otimes A_m^* \eta\rangle \\
= & \langle P_{H_m^2 \otimes E} \Phi^* P_{H_m^2 \otimes R} b_m^* \xi, 1 \otimes A_m^* \eta\rangle \\
= & \langle\Phi^* b_m^* \xi, 1 \otimes A_m^* \eta\rangle \\
= & \langle\Phi^* b_m^* \xi,\left(E_0 \otimes I_E\right)\left(1 \otimes A_m^* \eta\right)\rangle \\
= & \langle \left(E_0 \otimes I_E\right) \Phi^* b_m^* \xi, 1 \otimes A_m^* \eta\rangle \\
= & \langle \left(E_0 \otimes I_E\right) \Phi^* b_m^* \xi, P_{H_m^2 \otimes E} \Phi^* P_{H_m^2 \otimes R} b_m^* \eta\rangle \\
= & \langle b_m P_{H_m^2 \otimes R} \Phi P_{H_m^2 \otimes E}\left(E_0 \otimes I_E\right) \Phi^* b_m^* \xi, \eta\rangle \\
= &\langle b_m P_{H_m^2 \otimes R} \Phi\left(E_0 \otimes I_E\right) \Phi^* P_{H_m^2 \otimes R} b_m^* \xi, \eta\rangle.
\end{aligned}
$$
Thus, from the fact that $\Phi \in hom(H^{2} \otimes E, H^{2} \otimes R),$
we have
$$
\begin{aligned}
A_m A_m^* & =b_m P_{H_m^2 \otimes R} \Phi\left(E_0 \otimes I_E\right) \Phi^* P_{H_m^2 \otimes R} b_m^* \\
& =\left(b_m P_{H_m^2 \otimes R} \Phi\right)\left(I_{H^2 \otimes E}-\sum_{i=1}^{\infty} \mathfrak{T}_i \mathfrak{T}_i^*\right)\left(b_m P_{H_m^2 \otimes R} \Phi\right)^* \\
& =\left(b_m P_{H_m^2 \otimes R} \Phi\right)\left(b_m P_{H_m^2 \otimes R} \Phi\right)^*-\sum_{i=1}^m\left(b_m P_{H_m^2 \otimes R} \Phi\right) \mathfrak{T}_i \mathfrak{T}_i^*\left(b_m P_{H_m^2 \otimes R} \Phi\right)^* \\
& =\Gamma_m\left(P_{H_m^2 \otimes R} \Phi \Phi^* P_{H_m^2 \otimes R}\right)-\sum_{i=1}^m Z_i\left(b_m P_{H_m^2 \otimes R} \Phi\right)\left(b_m P_{H_m^2 \otimes R} \Phi\right)^* Z_i^* \\
& =\Gamma_m\left(P_{H_m^2 \otimes R} \Phi \Phi^* P_{H_m^2 \otimes R}\right)-\sum_{i=1}^m Z_i \Gamma_m\left(P_{H_m^2 \otimes R} \Phi \Phi^* P_{H_m^2 \otimes R}\right) Z_i^* \\
& =d \Gamma_m\left(P_{H_m^2 \otimes R} \Phi \Phi^* P_{H_m^2 \otimes R}\right),
\end{aligned}
$$
where $\mathfrak{T}_i=S_{z_i}\otimes I_{E}.$
\end{proof}
Set $\phi_m(X)=\sum\limits_{k=1}^m T_k X T_k^*.$\\
$Proof~of~Proposition~ \ref{2.151}.$ By Lemma \ref{Ad} and \cite[Theorem 3.10]{Arveson curvature}, we have
$$
\begin{aligned}
\operatorname{trace}(A_m A_m^*)&=\operatorname{trace}(d \Gamma_m(P_{H_m^2 \otimes R} \Phi \Phi^* P_{H_m^2 \otimes R}))\\&=\operatorname{rank \,\mathcal{H}} \lim _{n \rightarrow \infty} \frac{\operatorname{trace}\left(P_{H_m^2 \otimes R} \Phi \Phi^* P_{H_m^2 \otimes R} E_{n,m}\right)}{\operatorname{trace}\left(E_{n,m}\right)},
\end{aligned}
$$
where $E_{n,m} $ is the projection from $H_m^2 \otimes R$ onto its subspace of homogeneous (vector-valued) functions of degree $n$.
Since $L^* L+\Phi \Phi^*=I,$ by Lemma \ref{lemma74},
$$
\begin{aligned}
K_m(\mathcal{H}) & =\operatorname{rank} \mathcal{H}-\operatorname{trace}\left(A_m A_m^*\right) \\
&  =\operatorname{rank} \mathcal{H}\lim _{n \rightarrow \infty} \left( 1-\frac{\operatorname{trace}(P_{H_m^2 \otimes R} \Phi \Phi^* P_{H_m^2 \otimes R} E_{n,m})}{\operatorname{trace}\left(E_{n,m}\right)}\right)\\
& =\operatorname{rank} \mathcal{H} \lim _{n \rightarrow \infty} \frac{\operatorname{trace}\left(P_{H_m^2 \otimes R} L^* L P_{H_m^2 \otimes R} E_{n,m}\right)}{\operatorname{trace}\left(E_{n,m}\right)}\\&=\lim _{n \rightarrow \infty} \frac{\operatorname{trace}\left(P_{H_m^2 \otimes R} L^* L P_{H_m^2 \otimes R} E_{n,m}\right)}{q_{m-1}(n)}\\&=\lim _{n \rightarrow \infty} \frac{\operatorname{trace}\left(L P_{H_m^2 \otimes R} E_{n,m} P_{H_m^2 \otimes R} L^*\right)}{q_{m-1}(n)}\\&=\lim _{n \rightarrow \infty} \frac{\operatorname{trace}\left(\phi_m^n\left(\Delta_{\mathcal{H}}^2\right)\right)}{q_{m-1}(n)}.
\end{aligned}
$$
The last equation follows from the reasoning below.
For $\eta \in \mathcal{H}$, by Theorem \ref{2.6},
$$
L^* \eta=\left(\zeta_0, \zeta_1, \cdots\right),\quad \zeta_n=\sum_{i_1, i_2, \cdots i_n=1}^{\infty} e_{i_1} \otimes \cdots \otimes e_{i_n} \otimes \Delta_{\mathcal{H}} T_{i_n}^* \cdots T_{i_1}^* \eta.
$$
Let $\zeta_{m,n}=\sum\limits_{i_1, i_2, \cdots i_n=1}^m e_{i_1} \otimes \cdots \otimes e_{i_n} \otimes\Delta_{\mathcal{H}} T_{i_n}^* \cdots T_{i_1}^* \eta,$
then
$$
\begin{aligned}
L P_{H_m^2 \otimes R} E_{n,m} P_{H_m^2 \otimes R} L^* \eta &=L P_{H_m^2 \otimes R} E_{n,m} P_{H_m^2 \otimes R}\left(\zeta_0, \zeta_1, \cdots\right) \\&=L P_{H_m^2 \otimes R} E_{n,m}\left(\zeta_{m,0}, \zeta_{m,1}, \cdots\right) \\
& =L P_{H_m^2 \otimes R} \zeta_{m,n}  \\&=L \zeta_{m,n} \\&=\sum_{i_1, \cdots, i_n=1}^m T_{i_1} \cdots T_{i_n} \Delta_{\mathcal{H}}^2 T_{i_n}^* \cdots T_{i_1}^* \eta\\&=\phi_m^n\left(\Delta_{\mathcal{H}}^2\right) \eta.
\end{aligned}
$$
Therefore
$$
K_m(\mathcal{H})=\lim _{n \rightarrow \infty} \frac{\operatorname{trace}\left(\phi_m^n\left(\Delta_{\mathcal{H}}^2\right)\right)}{q_{m-1}(n)}=\lim _{n \rightarrow \infty} \frac{\sum\limits_{k=0}^n \operatorname{trace}\left(\phi_m^k\left(\Delta_{\mathcal{H}}^2\right)\right)}{\sum\limits_{k=0}^n q_{m-1}(k)}
=\lim _{n \rightarrow \infty} \frac{\operatorname{trace}\left(\sum\limits_{k=0}^n \phi_m^k\left(\Delta_{\mathcal{H}}^2\right)\right)}{q_m(n)}.
$$

Finally, we will prove Proposition \ref{3.21}.\\
{\color{red}$Proof~of~Proposition ~\ref{3.21}.$}
Obviously,
$$
\begin{aligned}
 \mathbb{M}_{\mathcal{H}}^{m,n}&=span \left\{f \cdot \Delta_{\mathcal{H}} \zeta \mid f \in \mathcal{P}_m, \operatorname{deg} f \leq n, \zeta \in H\right\} \\
& =span \left\{T_{i_1} \cdots T_{i_k} \Delta_{\mathcal{H}} \zeta \mid \zeta \in H, 1 \leq i_1, \cdots, i_k \leq m, k=0,1, \cdots, n\right\} 
\end{aligned}
$$
is a linear submanifold of $\mathcal{H}.$
Then
$$
\begin{aligned}
 {\mathbb{M}_{\mathcal{H}}^{m,n}}^{\perp}&=\left\{\eta \in \mathcal{H}\left|\sum_{k=0}^n \sum_{i_1=1}^{m}\cdots\sum_{i_k=1}^m|\left\langle T_{i_1} \cdots T_{i_k} \Delta_{\mathcal{H}} \zeta, \eta \right \rangle\mid=0, \forall \zeta \in \mathcal{H}\right\}\right. \\
& =\left\{\eta \in \mathcal{H} \left|\sum_{k=0}^n \sum_{i_1=1}^{m}\cdots\sum_{i_k=1}^m\|\Delta_{\mathcal{H}} T_{i_k}^* \cdots T_{i_1}^* \eta\|^2=0\right\}\right.\\&=\left\{\eta \in \mathcal{H}\left| \sum_{k=0}^n\left\langle\phi_m^k(\Delta_{\mathcal{H}}^2) \eta, \eta\right\rangle=0\right\} \right.\\
& = \operatorname{ker}\left(\sum_{k=0}^n \phi_m^k(\Delta_{\mathcal{H}}^2)\right). 
\end{aligned}
$$
Hence by \eqref{zuihou1}, $$\chi_m(\mathcal{H})=m ! \lim _{n \rightarrow \infty} \frac{\operatorname{dim} \mathbb{M}_{\mathcal{H}}^{m,n}}{n^m}=m ! \lim _{n \rightarrow \infty} \frac{\operatorname{dim} \operatorname{ran}\left(\sum\limits_{k=0}^n \phi_m^k\left(\Delta_{\mathcal{H}}^2\right)\right)}{n^m}.$$

\textbf{Acknowledgment.} The authors thank the referees for helpful suggestions, which make this paper more readable.

 \noindent{Penghui Wang, School of Mathematics, Shandong University, Jinan 250100, Shandong, P. R. China, Email:
phwang@sdu.edu.cn}

\noindent{Ruoyu Zhang, School of Mathematics, Shandong University, Jinan 250100, Shandong, P. R. China, Email:
202311805@mail.sdu.edu.cn
}

 \noindent{Zeyou Zhu, School of Mathematics, Shandong University, Jinan 250100, Shandong, P. R. China, Email:
201911795@mail.sdu.edu.cn}

\end{document}